\newcommand{\zh}[1]{\begin{CJK}{UTF8}{gbsn}#1\end{CJK}}
	\title[Maximal ergodic inequalities on noncommutative $L_p$-spaces]{Maximal ergodic inequalities for some positive operators on noncommutative $L_p$-spaces}
	\subjclass[2020]{46L51,47A35,46L55,47A20}
	\keywords{Maximal ergodic inequality, Noncommutative $L_p$-space, Dilation}
\author{Guixiang Hong}
\author{Samya Kumar Ray}
\author{Simeng Wang}
\address{Guixiang Hong: Institute for Advanced Study in Mathematics, Harbin Institute of Technology, Harbin 150001, China.}
\email{guixiang.hong@whu.edu.cn}
\address{Samya Kumar Ray: Stat-Math Unit, Indian Statistical Institute, 203, B. T. Road, Kolkata 700108, India}
\email{samyaray7777@gmail.com}
\address{Institute for Advanced Study in Mathematics, Harbin Institute of Technology, Harbin 150001, China.}
\email{simeng.wang@hit.edu.cn}
\theoremstyle{plain}
\newtheorem{thm}{Theorem}[section]
\newtheorem{cor}[thm]{Corollary}
\newtheorem{lem}[thm]{Lemma}
\newtheorem{Qu}[thm]{Question}
\newtheorem{prop}[thm]{Proposition}
\theoremstyle{definition}
\newtheorem{defn}[thm]{Definition}
\newtheorem{exa}[thm]{Example}
\newtheorem{rem}[thm]{Remark}
\begin{document}

\begin{abstract}
In this paper, we push forward the conjecture on a noncommutative version of the maximal ergodic theorem for positive contractions due to Ackoglu. That is, we establish the one-sided maximal ergodic inequalities for a large subclass of positive operators on noncommutative $L_p$-spaces for a fixed $1<p<\infty$, which particularly applies to positive isometries, the convex hull of positive Lamperti contractions and also the power bounded doubly Lamperti operators; moreover, it is known that this subclass recovers all positive contractions on the classical Lebesgue spaces $L_p([0,1])$. As an unexpected consequence, we deduce a completely bounded version of Ackoglu's theorem by making use of Ackoglu's original dilation. We also observe that the concrete examples of positive contractions without Akcoglu's dilation,  which were constructed by Junge-Le Merdy \cite{jungelemerdy07dilation}, still satisfy the maximal ergodic inequalities. Together with the class of power bounded positive invertible operators considered by Hong-Liao-Wang \cite{hongliaowang15erg}, we thereby verify the noncommutative Ackoglu ergodic theorem for all the positive operators that have naturally appeared in the literature up to the moment of writing. Based on the noncommutative Calder\'on transference principle established in \cite{hongliaowang15erg}, the general pattern of the demonstration follows from the classical one due to Kan, but several new ideas are absolutely necessary in the noncommutative setting. Let us just mention three of them: the argument for the maximal ergodic theorem for positive isometries is highly nontrivial compared to the classical case; the novel \emph{simultaneous} dilation property is indispensable to get the ergodic theorem for the convex hull of positive Lamperti contractions; numerous adjustments are truly needed  in this new setting to conclude a desired structural theorem for positive doubly Lamperti operators since the orthogonal relations of operator algebras are completely different from those in classical measure theory.




\end{abstract}

\maketitle

\tableofcontents

\section{Introduction}\label{sec:intro}

	In classical ergodic theory, one of the earliest pointwise ergodic  theorems was obtained by Birkhoff \cite{birkhoff31erg} in 1931. In many situations, it is well-known that establishing a maximal ergodic inequality  is enough to obtain a pointwise ergodic theorem. For example, the Birkhoff ergodic theorem can be derived from a weak $(1,1)$ type estimate of the maximal operator corresponding to the time averages, which was obtained by Wiener \cite{weiner39ergodic}. Dunford and Schwartz \cite{dunfordschwartz56erg} greatly generalized the previous situation; they established the strong $(p,p)$  maximal inequalities for all $1<p<\infty$ for time averages of positive $L_1$-$L_\infty$ contractions. However, the most general result in this direction was obtained by Akcoglu \cite{akcoglu75erg}, who established a maximal ergodic inequality for general positive contractions on $L_p$-spaces for a fixed $1<p<\infty$. The proof is based on an ingenious dilation theorem (see also \cite{akcoglusucheston77dilation}) which reduces the problem to the case of positive isometries, and the latter was already studied by Ionescu Tulcea \cite{ionescutulcea64ergisometry}. Akcoglu's dilation theorem has found numerous applications in various directions; let us mention (among others) Peller's work on Matsaev's conjecture for contractions on $L_p$-spaces \cite{peller76vn,peller76poly,peller81vn,peller85polyschatten}, Coifman-Rochberg-Weiss' approach to Stein's Littlewood-Paley theory \cite{coifmanrochbergweiss78transference}, $g$-function type estimates on compact Riemannian manifolds by Coifman-Weiss \cite{coifmanweiss76transference}, as well as functional calculus of Ritt and sectorial operators (see \cite{arhancetlemerdy14dilationritt,lemerdy14hinftyritt,lemerdy99hinfty} and references therein). On the other hand, we would like to remark that the Lamperti contractions consist of a typical class of general $L_p$-contractions. Moreover, Kan \cite{kan78erglamperti} established a maximal ergodic inequality for power bounded Lamperti operators whose adjoints are also Lamperti. Many more results for positive operators and Lamperti operators in the context of ergodic theory were studied further by various authors. We refer to \cite{jonesolsenwierdl92subseq,jonesolsen92subseq,sato87erghilbert,lemerdyxu12max,lemerdyxu12qvariation} and references therein for interested readers.

Motivated by quantum physics, noncommutative mathematics have advanced in a rapid speed. The connection between ergodic theory and von Neumann algebras is intimate and goes back to the earlier development of the theory of rings of operators. However, the study of pointwise ergodic theorems only took off with the pioneering work of Lance \cite{lance76erg}. The topic was then stupendously studied in a series of works due to Conze, Dang-Ngoc \cite{conzedangngoc78erg}, K{\"u}mmerer \cite{kummerer78erg}, Yeadon \cite{yeadon77max} and others. However, the maximal inequalities and pointwise ergodic theorems in $L_p$-spaces remained out of reach for many years until the path-breaking work of Junge and Xu \cite{jungexu07erg}. In \cite{jungexu07erg}, the authors established a noncommutative analogue of Dunford-Schwartz maximal ergodic theorem. This breakthrough motivated further research to develop various noncommutative ergodic theorems. We refer to \cite{bekjan08ergpositive,anantharaman06freeerg,hu08freeerg,hongsun16,hongliaowang15erg} and references therein. Notice that the general positive contractions considered by Akcoglu do not fall into the category of Junge-Xu \cite{jungexu07erg}. In the noncommutative setting, there are very few results for operators beyond $L_1$-$L_\infty$ contractions except some isolated cases studied in \cite{hongliaowang15erg}. In particular, the following noncommutative analogue of Akcoglu's maximal ergodic inequalities, which is more general than Junge-Xu's results \cite{jungexu07erg}, remains open. We refer to Section \ref{PRE} for a precise presentation of the notation appearing here and below. 
\begin{Qu}\label{QUE} 
	Let $\mathcal M$ be a von Neumann algebra equipped with a normal faithful semifinite trace $\tau.$ Let $1<p<\infty$ and $T:L_p(\mathcal M )\to L_p(\mathcal M )$ be a positive contraction. Does  there exist  a positive constant $C$, such that \[\Big\|\Big(\frac{1}{ n+1}\sum_{k=0}^nT^k x\Big)_{n\geq 0}\Big\|_{L_p(\mathcal M;\ell_\infty)}\leq C \|x\|_p\] for all $x\in L_p(\mathcal M) $?
\end{Qu}
We refer to Subsection \ref{sub2.2} for the notion of the noncommutative vector-valued $L_p$-space $L_p(\mathcal{M};\ell_\infty).$ In this article, we answer Question \ref{QUE} for a large class of positive operators  which do not fall into  the category of aforementioned works. Indeed, this class recovers all positive contractions concerned in Question \ref{QUE} if $\mathcal M$ is the classical space $L_\infty ([0,1])$; moreover, we actually verify the noncommutative Ackoglu ergodic theorem for all the positive operators that have naturally appeared in the literature until the moment of writing. To introduce our main results we set some notation  and definitions.
\begin{defn}Let $1\leq p<\infty.$ Let $T:L_p(\mathcal{M},\tau )\to L_p(\mathcal{M},\tau )$ be a bounded linear map. We say that $T$ is a 
	 \emph{Lamperti} operator (or say that $T$ \emph{separates supports})  if for any two $\tau$-finite projections $e,f\in\mathcal{M}$ with $ef=0,$ we have \[(Te)^*Tf=Te(Tf)^*=0.\]
\end{defn}
By standard approximation arguments, it is easy to observe that the above definition of Lamperti operators agrees with the known one  in the commutative setting (also called ``separation-preserving operators" or ``disjoint operators" in some references); the study of the latter goes back to Banach \cite[Section XI.5]{banach32book}, and have subsequently been considered in various works (see e.g. \cite{asmarberksongillespie91transf,fendler97dilation,fendler98dilation, kan78erglamperti,peller76vn,peller81vn,peller85polyschatten}). We refer the readers to Section \ref{SLAM} for related properties of Lamperti operators in the noncommutative setting.

The following is one of our main results. Throughout the paper, we will denote by $C_p$ a fixed distinguished constant depending only on $p$, which is given by the best constant of Junge-Xu's maximal ergodic inequality \cite[Theorem 0.1]{jungexu07erg}.
\begin{thm}\label{main}
	Let $1<p<\infty.$ Assume that $T:L_p(\mathcal{M})\to L_p(\mathcal{M})$ belongs to the family
	\begin{equation}\label{eq:acssplus}
	\overline{\operatorname{conv}}^{\, sot} \{S:L_p(\mathcal{M} )\to L_p(\mathcal{M} ) \text{ positive  Lamperti contractions}  \}, 
	\end{equation}
	that is, the closed convex hull of all positive Lamperti contractions on $L_p(\mathcal{M})$ with respect to the strong operator topology. Then   \[\Big\|\Big(\frac{1}{ n+1}\sum_{k=0}^nT^k x\Big)_{n\geq 0}\Big\|_{L_p(\mathcal M;\ell_\infty)}\leq C_p \|x\|_p\]
	for all $x\in L_p(\mathcal M).$	\end{thm}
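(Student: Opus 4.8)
The plan is to reduce the theorem to the Junge-Xu maximal ergodic inequality for positive $L_1$-$L_\infty$ contractions, using the dilation theory for Lamperti operators that the paper develops earlier. The key structural fact I would rely on is the simultaneous dilation theorem for Lamperti contractions mentioned in the abstract: every positive Lamperti contraction on $L_p(\mathcal{M})$ should dilate to an invertible operator (ultimately a trace-preserving $*$-automorphism, or at least a Dunford-Schwartz-type operator) on a larger von Neumann algebra. Concretely, I would first treat a single positive Lamperti contraction $S$ and produce a larger semifinite von Neumann algebra $(\mathcal{N},\hat\tau)$ containing $L_p(\mathcal{M})$ isometrically via an isometric positive embedding $J$, together with a normal positive contraction $\Phi$ on $L_p(\mathcal{N})$ that is simultaneously an $L_1$-$L_\infty$ (Dunford-Schwartz) contraction and satisfies $S^k = Q \Phi^k J$ for all $k \geq 0$, where $Q$ is a positive contractive projection/conditional-expectation-type map. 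The crucial word is \emph{simultaneous}: the dilation must be uniform in $n$ so that the ergodic averages dilate, i.e. $\frac{1}{n+1}\sum_{k=0}^n S^k = Q \big(\frac{1}{n+1}\sum_{k=0}^n \Phi^k\big) J$.

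Granting such a dilation, the maximal inequality for $S$ follows by transference: I would apply Junge-Xu's theorem \cite{jungexu07erg} to $\Phi$ on $L_p(\mathcal{N})$ to get
\[
\Big\| \sup_{n\geq 0}\!^+ \frac{1}{n+1}\sum_{k=0}^n \Phi^k (Jx) \Big\|_{p,\mathcal{N}} \leq C_p \|Jx\|_{p,\mathcal{N}} = C_p \|x\|_p,
\]
and then push the estimate back down through $Q$. Here I would need that the noncommutative maximal norm $\|\sup_n^+ \cdot\|_p$ behaves contractively (or at least boundedly with no worse constant) under the positive contractive maps $J$ and $Q$; this monotonicity of the maximal norm under positive contractions that commute appropriately with the averaging is standard in the Junge-Xu framework and is exactly what makes transference work. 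This yields the inequality with the same constant $C_p$ for every positive Lamperti contraction $S$.

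The remaining step is to pass from individual contractions $S$ to the strong closure of the convex hull in \eqref{eq:acssplus}. For a finite convex combination $T = \sum_i \lambda_i S_i$ I cannot simply dilate $T$ directly, because a convex combination of Lamperti operators need not be Lamperti; so the convexity must be handled at the level of the maximal inequality itself. I would use that the maximal inequality constant $C_p$ is the \emph{same} for all the $S_i$ and invoke the convexity/subadditivity of the maximal norm: the functional $x \mapsto \|\sup_n^+ \frac{1}{n+1}\sum_{k=0}^n (\cdot)^k x\|_p$, viewed appropriately as a norm on the sequence space $L_p(\mathcal{M}; \ell_\infty)$, is convex, and one checks that the averages for $T$ are dominated in the order-theoretic sense by the convex combination of the averages for the $S_i$. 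The genuinely delicate point — and the step I expect to be the main obstacle — is precisely this convex-combination argument: proving that the uniform bound $C_p$ is preserved under convex combinations and then under the strong operator topology closure. For the closure I would use a weak-$*$ compactness / lower semicontinuity argument for the noncommutative maximal norm (the $\sup^+$ is defined via an infimum over factorizations, which is lower semicontinuous under sot-limits on each fixed $L_p$-element), together with the fact that $\frac{1}{n+1}\sum_{k=0}^n T^k x$ depends continuously on $T$ in the sot for fixed $x$ and each fixed $n$. Managing the interchange of the supremum over $n$ with the limit in the convex-hull closure, while keeping the constant exactly $C_p$, is where the real care is required.
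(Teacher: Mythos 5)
Your overall architecture (dilate, transfer Junge--Xu, then handle the convex hull and the sot-closure) superficially resembles the paper's, but two of your key steps do not work as stated. First, the dilation you posit is too strong: a positive Lamperti contraction does not, in general, dilate to a Dunford--Schwartz ($L_1$-$L_\infty$) contraction. The simultaneous dilation that is actually available (Proposition \ref{fcss}) produces positive Lamperti \emph{isometries} $U_T$ on $\ell_p(L_p(\mathcal M))$, built from the operator $b$ and the Jordan $*$-homomorphism $J$ of Theorem \ref{charac}; these are neither trace-preserving nor sub-unital in general, so Junge--Xu's theorem cannot be applied to the dilated operator directly. This is precisely why the paper must first prove a maximal ergodic theorem for positive isometries (Theorem \ref{DEEISO}), and that step has its own nontrivial ingredients: the extension of a positive isometry to an isometry of $L_p(\mathcal M;\ell_\infty)$ (Proposition \ref{prop:extension}), followed by a transference argument reducing matters to the shift on $\ell_\infty(\mathbb N)\overline{\otimes}\mathcal M$ --- it is the \emph{shift}, not the dilated isometry, that is Dunford--Schwartz and to which \cite{jungexu07erg} is applied.

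Second, and more seriously, your treatment of convex combinations fails. You assert that the averages of $T=\sum_i\lambda_i S_i$ are order-dominated by the convex combination of the averages of the $S_i$; this is false, because $T^k=\sum_{i_1,\dots,i_k}\lambda_{i_1}\cdots\lambda_{i_k}\,S_{i_1}\cdots S_{i_k}$ is a convex combination of \emph{mixed products}, not of the powers $S_i^k$, and there is no pointwise or maximal-norm comparison between $\frac{1}{n+1}\sum_{k=0}^n T^k x$ and $\sum_i\lambda_i\,\frac{1}{n+1}\sum_{k=0}^n S_i^k x$. Knowing the maximal inequality with constant $C_p$ for each $S_i$ does not, by any convexity or lower-semicontinuity argument, yield it for $T$. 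Moreover, your premise that one ``cannot simply dilate $T$ directly'' is exactly what the paper contradicts: the point of the \emph{simultaneous} dilation (a common $Q$, $J$ and isometries $U_{S_i}$ dilating all finite products $S_{i_1}\cdots S_{i_n}$), combined with the Fackler--Gl\"uck construction (Theorem \ref{GDI}), is that every $T\in\operatorname{conv}(\mathbb{SS}^+(L_p(\mathcal{M})))$ \emph{does} admit an $N$-dilation by a single positive isometry for every $N$ (Corollary \ref{CO4.3}); the maximal inequality for $T$ is then transferred through this dilation from Theorem \ref{DEEISO}, and only the final passage to the sot-closure is done by approximation with the truncated maximal norms of Proposition \ref{FINT} (this last step of yours is essentially correct). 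Without an $N$-dilation of the convex combination itself, your proposal has no working mechanism to pass from individual Lamperti contractions to their convex hull, which is the heart of the theorem.
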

It is worth noticing that the class introduced in \eqref{eq:acssplus} is quite large in the classical setting. Indeed, together with \cite[Theorem 2]{grzp90approxpositive} and \cite{facklergluck19dilation}, we know that for $\mathcal M = L_\infty ([0,1])$ equipped with the Lebesgue measure, we have
\begin{align*}
&\{S:L_p([0,1])\to L_p([0,1] ) \text{ positive contractions}  \} \\
=\ & \overline{\operatorname{conv}}^{\, sot} \{S:L_p([0,1])\to L_p([0,1] ) \text{ positive   Lamperti contractions}  \},
\end{align*} 
which does recover the classical Akcoglu's ergodic theorem on $L_p([0,1] )$. Moreover, this method also helps to establish a completely bounded version of Ackoglu's ergodic theorem, which is a surprise, since as far as the authors know there is no existing theory to be applied or easy proof for this operator-valued analogue. See Corollary \ref{cbackoglu} and its proof.

As mentioned earlier, Akcoglu's arguments for ergodic theorem essentially rely on the study of dilations of positive contractions. In spite of various works on dilations on von Neumann algebras (see \cite{kummerer85markov,haagerupmusat11factorization,
jungericardshlyakhtenko2014noncommutative,arhancet20dilationlcg,arhancet19dilationvn, raymatsaev2020} and references therein),  Junge and Le Merdy showed in their remarkable paper \cite{jungelemerdy07dilation} that there is no `reasonable' analogue of Akcoglu's dilation theorem on noncommutative $L_p$-spaces. This becomes a serious difficulty in establishing a noncommutative analogue of Akcoglu's ergodic theorem. Our proof of the above theorem is based on the study of structural properties and simultaneous dilations of convex combinations of Lamperti operators as in \eqref{eq:acssplus}. This route is different from that of Akcoglu's original one. Let us mention some of the key steps and new ingredients in the proof, which might be of independent interest.
\begin{enumerate}
	\item[(i)]\textit{Noncommutative ergodic theorem for positive isometries} (Theorem \ref{DEEISO}): Following the classical case, the first natural step would be to establish a maximal ergodic inequality for positive isometries (see e.g. \cite{kan78erglamperti,ionescutulcea64ergisometry}). In this paper we give an analogue of this result in the noncommutative setting. However, the classical approach depending on `points' does not work in the noncommutative setting. The key ingredients are to extend positive Lamperti contractions and positive isometries on $L_p(\mathcal M)$ to the vector-valued space $L_p(\mathcal M;\ell_\infty)$ again as contractions and isometries respectively (see Lemma \ref{lem:extension lamperti} and Proposition \ref{prop:extension} respectively). These facts seem to be non-obvious. For example, restricted to  $2$-positive maps, Lemma \ref{lem:extension lamperti} follows from \cite[Proposition 7.3]{haajuxu10}. But extension of a positive contraction on $L_p(\mathcal M)$ to a `contraction' on $L_p(\mathcal M;\ell_\infty)$ is not guaranteed by \cite[Proposition 7.3]{haajuxu10}. However, we prove Lemma \ref{lem:extension lamperti} for any positive Lamperti contraction, by developing an approach completely independent of \cite[Proposition 7.3]{haajuxu10}. Then based on the transference techniques recently developed in \cite{hongliaowang15erg} combined with \cite[Theorem 0.1]{jungexu07erg}, we may obtain the desired maximal inequalities.  
	
	\item[(ii)]\textit{Structural theorems for Lamperti operators} (Theorem \ref{charac}, Theorem \ref{scharac}):
	In the classical setting, Peller \cite{peller76poly} and Kan \cite{kan78erglamperti} obtained a dilation theorem for Lamperti contractions. Their constructions are different from Akcoglu's and rely on structural descriptions of Lamperti operators. In the noncommutative setting, we first prove a similar characterization for Lamperti operators by using techniques from \cite{yeadon81isom}. Also, it is natural to consider the completely Lamperti operators in the noncommutative setting, and in this part we also prove a characterization theorem for these operators. This completes the second step for the proof of Theorem \ref{main}.  
	\item[(iii)]\textit{Dilation theorem for the convex hull of Lamperti contractions} (Theorem \ref{GDI}): In order to establish ergodic theorems for a large class \emph{beyond}   Lamperti contractions, we first prove a \emph{simultaneous} dilation theorem for tuples of   Lamperti contractions, which had not been observed by Peller and Kan and is a stronger version of their dilation theorem. The final step towards proving Theorem \ref{main} is to deploy tools from \cite{facklergluck19dilation} to obtain an $N$-dilation theorem for the convex hull of   Lamperti contractions for all $N\in\mathbb N$.   Our approach also establishes validity of \emph{noncommutative Matsaev's conjecture} for the strong closure of the closed convex hull of Lamperti contractions for $1<p\neq 2<\infty$  whenever the underlying von Neumann algebra has QWEP (see Corollary \ref{cor:matsaev} for details).

	It is worth mentioning that the dilatable contractions studied prior to our work are mostly those acting on the von Neumann algebra itself, except `loose dilation' results in \cite{arhancetlemerdy14dilationritt,arhancetfacklerlemerdy17dilationritt}. Also, our result might have some applications along the line of \cite{asmarberksongillespie91transf,coifmanrochbergweiss78transference,fendler97dilation,jonesolsenwierdl92subseq}. We leave this research direction open. 
\end{enumerate}

Note that Theroem \ref{main} only applies to  \emph{contractive} operators. As the classical case, the study for non-contractive power bounded operators requires additional efforts. In the following we  also establish a general ergodic theorem for power bounded Lamperti operators as soon as their adjoints are also Lamperti (usually called \emph{doubly Lamperti} operators), which is the other main result of the paper.
\begin{thm}\label{main1}Let $1<p<\infty$, $1/p+1/{p^\prime}=1$ and let $\mathcal M$ be a finite von Neumann algebra. Assume that $T:L_p(\mathcal M)\to L_p(\mathcal M)$ is a positive Lamperti operator such that the adjoint operator $T^*:L_{p^\prime}(\mathcal M)\to L_{p^\prime}(\mathcal M)$ is also Lamperti and $\sup_{n\geq 1}\|T^n\|_{L_p(\mathcal M)\to L_p(\mathcal M)}=K<\infty$. Then  \[\Big\|\Big(\frac{1}{ n+1}\sum_{k=0}^nT^k x\Big)_{n\geq 0}\Big\|_{L_p(\mathcal M;\ell_\infty)}\leq KC_p\|x\|_p\] for all $x\in L_p(\mathcal M).$
\end{thm}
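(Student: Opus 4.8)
The plan is to reduce the power bounded doubly Lamperti case to the contractive setting already handled by Theorem \ref{main}, by means of a suitable renorming of $L_p(\mathcal M)$ that turns $T$ into a positive Lamperti contraction. First I would exploit the hypothesis $\sup_{n\geq 1}\|T^n\|=K<\infty$ together with the structural theorem for Lamperti operators (Theorem \ref{charac}) to understand $T$ concretely: a Lamperti operator should factor as a weighted composition, i.e.\ $T$ is built from a Jordan $*$-homomorphism (or partial isometry implementing support separation) composed with multiplication by a positive density. The power boundedness then forces quantitative control on the iterated weights. The key idea is to define an equivalent norm adapted to $T$, for instance via
\begin{equation}\label{eq:renorm}
N(x)=\sup_{n\geq 0}\Bigl\|\frac{1}{n+1}\sum_{k=0}^{n}T^k x\Bigr\|_p,
\end{equation}
or a Cesàro-type averaging of the norms $\|T^n x\|_p$, and to verify that $K^{-1}\|x\|_p\le N(x)\le K\|x\|_p$ so that $N$ is equivalent to $\|\cdot\|_p$, while $T$ becomes a contraction for $N$.

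Next I would argue that, after this renorming, $T$ still lies in the class governed by Theorem \ref{main}, namely the strong closure of the convex hull of positive Lamperti contractions. Here the doubly Lamperti hypothesis (both $T$ and $T^*$ separate supports) is essential: it is precisely this symmetry that, via the structural characterization, guarantees that the underlying Jordan homomorphism part is implemented by a genuine (trace-preserving up to the density) map rather than a merely one-sided object, so that contractivity with respect to the new norm can be promoted to membership in the desired convex class. In the finite von Neumann algebra setting the normalization is cleaner, which is presumably why the theorem is stated for finite $\mathcal M$. The upshot is that Theorem \ref{main} yields a maximal inequality for $T$ with respect to the renormed space, with constant $C_p$.

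Finally I would transfer the maximal inequality back to the original norm. Since the ergodic averages $A_n x = \frac{1}{n+1}\sum_{k=0}^n T^k x$ are computed identically in both norms, and the noncommutative maximal norm $\|\sup_n^+ (\cdot)\|_p$ depends on $\|\cdot\|_p$ rather than on $N$, I would use the equivalence constants to pass from the estimate in the renormed space to
\begin{equation}\label{eq:transfer}
\Bigl\|\sup_{n\geq 0}\!^+ A_n x\Bigr\|_p \le K C_p \|x\|_p,
\end{equation}
picking up exactly one factor of $K$ as claimed. The main obstacle I anticipate is Step two: verifying that $T$ genuinely falls into the convex-hull class after renorming. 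Power boundedness alone does not give a contraction for the original norm, and the noncommutative Lamperti structure is more delicate than in the commutative case—one must ensure that the renorming is compatible with the order structure and that the partial-isometry/Jordan data produced by Theorem \ref{charac} assemble into positive Lamperti \emph{contractions} for $N$, rather than merely bounded maps. Establishing this compatibility, likely through the explicit weighted-composition description and a careful analysis of how $T^*$ being Lamperti constrains the density weights, is where the real work lies; the renorming and transference steps are then comparatively routine.
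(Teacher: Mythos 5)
Your plan breaks down at the step you yourself flag as the main obstacle, and the failure there is structural rather than technical. Theorem \ref{main} is a statement about operators that are contractions for the \emph{genuine} $L_p$-norm: the class $\overline{\operatorname{conv}}^{sot}(\mathbb{SS}^+(L_p(\mathcal{M})))$, the dilation theory behind it, and the vector-valued maximal norm $L_p(\mathcal M;\ell_\infty)$ (defined through factorizations $x_n=ay_nb$ with $a,b\in L_{2p}(\mathcal M)$, i.e.\ through the trace and the operator order) all live on $L_p(\mathcal M)$ itself. After renorming, $(L_p(\mathcal M),N)$ is merely a Banach space isomorphic to $L_p(\mathcal M)$; it is not a noncommutative $L_p$-space, an $N$-contraction is not an $L_p$-contraction, and a power bounded operator that is not $L_p$-contractive simply does not belong to the convex-hull class, whatever equivalent norm one chooses. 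There is also no ``maximal inequality in the renormed space'' available to transfer back, since the norm of $L_p(\mathcal M;\ell_\infty)$ has no counterpart attached to $N$. This is not a noncommutative subtlety: already in the commutative case, renorming does not reduce Kan's theorem for power bounded doubly Lamperti operators to Akcoglu's theorem, which is exactly why Kan (and this paper) argue differently. A smaller point: with the Ces\`aro renorming you propose, $T$ need not even be an $N$-contraction, because $A_nTx=\tfrac{n+2}{n+1}A_{n+1}x-\tfrac{1}{n+1}x$; only $N(x)=\sup_{n\geq 0}\|T^nx\|_p$ makes $T$ contractive, and even then the obstruction above remains.

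The paper's route is a multiplicative factorization rather than a renorming. Theorem \ref{CHARACDL} (whose proof is where finiteness of $\mathcal M$ and the refined structure from Theorem \ref{charac} are used) shows that $T^n=\theta_nS^n$, where $S$ is a \emph{single} positive Lamperti contraction, $\theta_n\in\mathcal M_+$ commutes with $S^n(x)$, and $\|T^n\|\leq\|\theta_n\|_\infty$ with \emph{equality} when $T^*$ is Lamperti; the doubly Lamperti hypothesis enters precisely here (via Proposition \ref{duss}) to guarantee $\|\theta_n\|_\infty=\|T^n\|\leq K$, which fails for merely one-sided Lamperti operators. Consequently, for $x\geq 0$ one has the operator-order domination $T^nx=\theta_nS^nx\leq\|\theta_n\|_\infty S^nx\leq KS^nx$, hence $\frac{1}{n+1}\sum_{k=0}^nT^kx\leq K\cdot\frac{1}{n+1}\sum_{k=0}^nS^kx$. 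Since for positive sequences the maximal norm is an infimum over operator upper bounds, it is monotone under such domination, and Theorem \ref{main} applied to the contraction $S$ immediately yields the bound $KC_p\|x\|_p$. The essential mechanism is thus a pointwise inequality in the operator order --- exactly the structure the noncommutative maximal norm respects and the one a Banach-space renorming cannot supply.
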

The above theorem is the noncommutative analogue of a classical result of Kan \cite{kan78erglamperti}. It essentially relies on a structural theorem for positive doubly completely Lamperti operators (Theorem \ref{CHARACDL}), which reduces the problem to the setting of Theorem \ref{main}.
To prove this structural result, we follow the path of Kan. However, since the structures and orthogonal relations of von Neumann subalgebras are  completely different from those in classical measure theory, there appear lots of  new technical difficulties and numerous adjustments are truly needed in this new setting. Also, due to these technical reasons, we restrict our study to the case of finite von Neumann algebras only. The reader is strongly recommended to discover these non-trivialities  by adapting Kan's approach by himself/herself. 
\bigskip

Moreover, we observe that the maximal ergodic inequalities also hold for several other classes of operators outside the scope of Theorem \ref{main} or Theorem \ref{main1}. 

(i) \textit{Positive invertible operators which are not Lamperti} (Example \ref{ex:exa}):
Kan \cite{kan78erglamperti} discussed various examples of Lamperti operators. He showed that any positive invertible operator with positive inverse is Lamperti in the \emph{classical} setting. As a consequence, he reproved that any power bounded positive operator with positive inverse admits a maximal ergodic inequality; this generalized the ergodic theorem of de la Torre \cite{delatorre76erg}. A noncommutative analogue of this theorem, in a much general form, was achieved in \cite{hongliaowang15erg} (see Theorem \ref{inve}).  

However, in this article we provide examples of positive invertible operators on \emph{noncommutative} $L_p$-spaces with positive inverses which are \emph{not} even Lamperti. Therefore, Kan's
method does not immediately deduce de la Torre's ergodic theorem \cite{delatorre76erg} in the noncommutative setting.  Nevertheless, these examples fall into the category of the aforementioned result of \cite{hongliaowang15erg}, and hence satisfy the maximal ergodic theorem. We would like to remark that Kan's aforementioned examples of Lamperti operators play an important role in many other papers such as \cite{berksongillespie97meanbdd,jonesolsen92subseq,jonesolsenwierdl92subseq} and references therein. Kan \cite{kan78erglamperti} also showed that any positive invertible operator on a finite dimensional (commutative) $L_p$-space with $\sup_{n\in\mathbb Z}\|T^n\|_{L_p\to L_p}<\infty$ is Lamperti. Our example shows that this is again \emph{not} true in the noncommutative setting. All these phenomena seem to be new.

(ii) \textit{Junge-Le Merdy's non-dilatable example}: As mentioned earlier, there exist concrete examples of completely positive complete contractions which fail to admit a noncommutative analogue of Akcoglu's dilation, constructed by Junge and Le Merdy \cite{jungelemerdy07dilation}. In this paper we show that these operators still satisfy a maximal ergodic inequality. In particular we establish the following fact.
\begin{prop}\label{atm}
	Let $1<p\neq 2<\infty.$ Then for all $k\in\mathbb N$ large enough, there exists a completely positive complete contraction $T:S_p^k\to S_p^k$ such that \[\Big\|\Big(\frac{1}{ n+1}\sum_{k=0}^nT^k x\Big)_{n\geq 0}\Big\|_{L_p(\mathcal M;\ell_\infty)}\leq (C_p +1) \|x\|_p,\quad x\in L_p(\mathcal M),\]
	but $T$ does not have a dilation (in the sense of Definition \ref{dide1}).
\end{prop}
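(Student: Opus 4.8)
**The plan is to exhibit Junge--Le Merdy's explicit non-dilatable operator and verify directly that it satisfies the maximal ergodic inequality.** The starting point is the concrete construction in \cite{jungelemerdy07dilation}: for large $k$, Junge and Le Merdy produce a completely positive complete contraction $T$ on the Schatten class $S_p^k$ which admits no dilation in the sense of Definition~\ref{dide1}. I would first recall the precise form of this operator. In their example $T$ is a Schur multiplier (or a closely related averaging-type map) whose matrix symbol is carefully chosen so that the dilation obstruction—detected via a suitable vector-valued or completely bounded norm estimate that any dilation would force—fails for $p \neq 2$ and $k$ large. The non-existence of a dilation is therefore a given: it is exactly the content of \cite{jungelemerdy07dilation}, so no work is needed there beyond citing it.

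The real task is the maximal inequality. Here the key observation is that $S_p^k = L_p(\mathcal M)$ for $\mathcal M = M_k$ equipped with the (normalized or unnormalized) trace, a \emph{finite} von Neumann algebra. I would try to show that $T$ itself, or a simple perturbation of it, falls within the scope of one of the two main theorems. The cleanest route is to check whether $T$ is a positive Lamperti contraction, or lies in the strong closure of the convex hull \eqref{eq:acssplus}; if so, Theorem~\ref{main} applies verbatim and yields the bound $C_p\|x\|_p$. The slack constant $(C_p+1)$ in the statement, however, strongly suggests that $T$ is \emph{not} directly covered, and that one instead decomposes $T$ (or its Cesàro averages) into a piece handled by Theorem~\ref{main} plus a rank-one/trace-type remainder. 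A natural guess is that $T$ is a convex combination $T = \lambda P + (1-\lambda)R$ where $P$ is a conditional-expectation-type or Lamperti piece and $R$ is a one-dimensional projection onto a fixed point (e.g.\ $R(x) = \tau(x)\,\mathbf 1$ or a normalized trace functional).

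For the remainder term I would estimate the maximal operator associated to the averages of $R$ crudely: since $R$ is a trace-class rank-one map, the Cesàro means $\frac{1}{n+1}\sum_{k=0}^n R^k x$ stabilize almost immediately (as $R$ is idempotent-like or has a single nonzero eigenvalue), so the corresponding noncommutative maximal norm $\big\|\sup_n^+ \,\cdot\,\big\|_p$ is controlled by a single term, contributing at most $\|x\|_p$. Combined with the $C_p\|x\|_p$ bound from Theorem~\ref{main} applied to the main piece, the triangle inequality for the maximal norm $\|\sup_n^+\|_p$ (which is a genuine norm on the relevant space $L_p(\mathcal M;\ell_\infty)$) gives the total bound $(C_p+1)\|x\|_p$. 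The precise split and the value of $\lambda$ will have to be read off from the explicit symbol of $T$.

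\textbf{The main obstacle} is identifying exactly which decomposition of Junge--Le Merdy's operator makes the argument go through, and confirming that the ``main piece'' really does land inside the class \eqref{eq:acssplus} rather than merely being a positive contraction. Because $T$ provably has no dilation, it cannot itself be an Akcoglu-dilatable contraction, so the Lamperti/convex-hull membership of its components must be verified by hand from the matrix symbol—this is where the large-$k$ and $p\neq 2$ hypotheses are genuinely used. I expect the verification that the leading component is Lamperti (equivalently, separates supports at the level of spectral projections in $M_k$, perhaps after conjugating by a suitable unitary or diagonal map) to require the structural characterization of Theorem~\ref{charac}, and to be the delicate computational heart of the proof.
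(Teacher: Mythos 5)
Your proposal has the right top-level structure (cite Junge--Le Merdy for non-dilatability, prove the maximal inequality by hand), but the mechanism you propose for the maximal inequality contains a genuine gap. You want to write $T=\lambda P+(1-\lambda)R$ and then bound $\big\|\sup_n^+\frac{1}{n+1}\sum_{m=0}^n T^m x\big\|_p$ by applying Theorem \ref{main} to $P$, a crude estimate to $R$, and the triangle inequality in $L_p(\mathcal M;\ell_\infty)$. This cannot work as stated: $T^m=(\lambda P+(1-\lambda)R)^m$ is a sum of \emph{non-commuting words} in $P$ and $R$, not $\lambda^m P^m+(1-\lambda)^m R^m$, so the ergodic averages of $T$ are not convex combinations of the ergodic averages of $P$ and $R$, and no triangle inequality reduces one maximal function to the others. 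Indeed, controlling ergodic maximal functions of convex combinations is exactly the hard point that forces the paper to develop the simultaneous-dilation machinery (Proposition \ref{fcss} and Theorem \ref{GDI}); and even that route would require verifying that the pieces are positive Lamperti contractions, which you correctly flag as unresolved (note, e.g., that the diagonal conditional expectation $T_3(x)=\sum_i e_{ii}xe_{ii}$ appearing in the Junge--Le Merdy example is \emph{not} Lamperti, so Theorem \ref{charac} will not certify it). Moreover, that route would produce the constant $C_p$, leaving the $+1$ in the statement unexplained.

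The paper's actual proof is entirely different and elementary, and it bypasses all Lamperti/convex-hull considerations. Writing the Junge--Le Merdy operator explicitly, $T=\frac14(T_1+T_2+T_3+T_4)$ with $a_i=e_{ii}$, $b_i=k^{-1/(2p)}e_{1i}$, one checks by direct computation that $T$ maps every element into the \emph{diagonal} subspace $\mathcal D_p^k\subseteq S_p^k$. Hence $T|_{\mathcal D_p^k}$ is a positive contraction on the commutative space $\ell_p^k$, and the \emph{classical} Akcoglu theorem gives $\big\|\sup_n^+\frac{1}{n+1}\sum_{m=0}^n T^m y\big\|_p\leq C_p\|y\|_p$ for $y\in\mathcal D_p^k$. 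For general $x\geq 0$ one peels off the $m=0$ term and applies this to $y=Tx$:
\begin{equation*}
\Big\|\sup_{n\geq 0}\!^+\frac{1}{n+1}\sum_{m=0}^n T^m x\Big\|_p
\leq \|x\|_p+\Big\|\sup_{n\geq 0}\!^+\frac{1}{n+1}\sum_{m=1}^n T^{m-1}(Tx)\Big\|_p
\leq (C_p+1)\|x\|_p,
\end{equation*}
which is where the $+1$ really comes from. So the missing idea in your proposal is the range observation $\operatorname{ran}(T)\subseteq\mathcal D_p^k$, which reduces everything after one application of $T$ to a commutative ergodic theorem; without it, your decomposition strategy does not close.
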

The proof is short and elementary, and relies surprisingly on {Akcoglu's ergodic theorem \cite{akcoglu75erg}} in the \emph{classical} setting.
The above theorem illustrates again that the noncommutative situation is significantly different  from the classical one. 

\bigskip

We end our introduction by briefly mentioning the organization of the paper. In Section \ref{PRE} we recall the necessary background including all the requisite definitions. In Section \ref{SLAM}, we prove the characterization theorems for Lamperti and completely Lamperti operators. In Section \ref{DILATION}, we prove the dilation theorem for the convex hull of Lamperti contractions, and establish the validity of noncommutative Matsaev's conjecture for this class of contractions. In Section \ref{isodee}, we prove the maximal ergodic inequalities for positive isometries and then deduce Theorem \ref{main} by applying the dilation theorem in Section \ref{DILATION}. Section \ref{lamp} is devoted to the proof of Theorem \ref{main1}, which involves some additional properties of Lamperti operators as well as an useful characterization theorem for doubly Lamperti operators. In Section \ref{exa}, we consider noncommutative ergodic theorems for various interesting operators which are out of the scope of Theorem \ref{main} and Theorem \ref{main1}. 

\medskip

After we finished the preliminary version of this paper, we learned that some partial results in Section \ref{SLAM} were also obtained independently in \cite{lemerdyzadeh19l1,lemerdyzadeh20isom} at the same time; a related study was also given in \cite{huangsukochevzanin20submaj}. However, both the main results and the arguments of this paper are  quite different and independent, which cannot be recovered from their works.

The main results of this paper were announced in \cite{hongraywang2020}.

\section{Preliminaries}\label{PRE}
\subsection{Noncommutaive $L_p$-spaces}
For any closed densely defined linear map  $T$ on a Banach space, we denote by $\ker T$ and $\operatorname{ran}\, T$ the kernel and range of $T$ respectively. Let $\mathcal M$ be a von Neumann algebra equipped with a normal semifinite faithful trace $\tau_\mathcal{M}$, which acts on a Hilbert space $\mathcal H.$ We also simply denote the trace $\tau_\mathcal{M}$ by $\tau$ if no confusion will occur.  Unless specified, we always work with von Neumann algebras of this kind. The unit in $\mathcal M$ is denoted by $1_{\mathcal M}$ or simply by $1$ and the extended positive cone of $\mathcal M$ is denoted by $\widehat{\mathcal M}_+$. Let  $L_0(\mathcal M)$  be the $*$-algebra of all closed densely defined  operators on $\mathcal H$  measurable with respect to $(\mathcal M ,\tau )$. For a subspace $A\subseteq L_0(\mathcal M)$, we denote by $A_+$ the cone of positive elements in $A$, and by  $\mathcal{Z}(A)$ the center of $A$ if $A$ is a subalgebra. The trace $\tau$ can be extended to $L_0(\mathcal M)_{+}$ and $\widehat{\mathcal M}_+$. 
A sequence $(x_n)_{n\geq 1}\subseteq L_0(\mathcal M)$ is said to \textit{converge in measure} to $x\in L_0(\mathcal M)$ if \[\forall\,\varepsilon>0 , \quad  \lim_{n\to\infty}\tau(e_\varepsilon^{\perp}(|x_n-x|))=0,\] where $e_\varepsilon^{\perp}(y)\coloneqq\chi_{(\varepsilon,\infty)}(y)$ for any $y\in L_0(\mathcal M)_+$ and $\chi$ denotes the usual characteristic function. 
We denote by $s(x)$ the support of  $x$ for a positive element $x\in L_0(\mathcal M)_{+}$. For any projection $e\in\mathcal M$ we denote $e^{\perp}=1-e.$

Let $\mathcal{S}(\mathcal{M})$ be the linear span of all positive elements in $\mathcal M$ such that $\tau(s(x))<\infty$. Let $\mathcal P(\mathcal M)$ denote the set of all projections in $\mathcal M.$ A projection $e\in \mathcal M$ is said to be $\tau$-finite if $e\in \mathcal{S}(\mathcal{M})$. For $1\leq p<\infty,$ we define the noncommutative $L_p$-space $L_p(\mathcal{M},\tau)$ to be the completion of $\mathcal{S}({\mathcal{M}})$ with respect to the norm \[\|x\|_{L_p(\mathcal{M})}\coloneqq\tau(|x|^p)^{\frac{1}{p}}, \quad \text{where } |x|=(x^*x)^{\frac{1}{2}}.\]
The Banach lattice structure of $L_p(\mathcal{M},\tau)$ does not depend on the choice of $\tau$ and we often simply denote the space by $L_p (\mathcal M)$ if no ambiguity will occur. We set $L_{\infty}(\mathcal{M})=\mathcal{M}$. It is well-known that $L_p (\mathcal M)$ can be viewed as a subspace of $L_0 (\mathcal M)$. For any $\sigma$-finite measure space $(\Omega,\mu),$ we have a natural identification for $L_p(L_\infty(\Omega)\overline{\otimes} \mathcal M)$ as the Bochner space $L_p(\Omega;L_p( \mathcal M))$ for $1\leq p<\infty.$

If $\mathcal{M}=B(\mathcal{H})$ for a Hilbert space $\mathcal{H}$ and if $\tau$ is the usual trace $Tr$ on it, then the corresponding noncommutative $L_p$-spaces are usually called Schatten-$p$ classes and denoted by $S_p(\mathcal{H})$ for $1\leq p<\infty$. When $\mathcal H$ is $\ell_2^n$ or $\ell_2$ we denote $S_p(\mathcal{H})$ by $S_p^n$ and $S_p$ respectively and we identify $B(\ell_2^n)$ with the set of $n\times n$ matrices which we also denote by $M_n$. The set of all compact operators on $\ell_2$ and $\ell_2^n$ are denoted by $S_\infty$ and $S_\infty^n$ respectively. A linear map $T:L_p(\mathcal{M} )\to L_p(\mathcal{M} )$ is said to be positive if $T$ maps $L_p(\mathcal{M} )_{+}$ to $L_p(\mathcal{M} )_{+}.$ We say that $T$ is \emph{completely positive} if the linear map $I_{S_p^n}\otimes T:L_p(M_n\overline{\otimes}\mathcal{M},Tr\otimes\tau )\to L_p(M_n\overline{\otimes}\mathcal{M},Tr\otimes\tau )$ is positive for all $n\in\mathbb{N}.$ The set of positive and completely positive operators on $L_p(\mathcal M)$ are closed under strong operator limits. A linear map $T:L_p(\mathcal M)\to L_p(\mathcal M)$ is   \emph{completely bounded} if \[\|T\|_{cb, L_p(\mathcal M)\to  L_p(\mathcal M)}\coloneqq \sup\limits_{n\geq 1}\|I_{S_p^n}\otimes T\|_{L_p(M_n\overline{\otimes}\mathcal M,Tr \otimes\tau)\to L_p(M_n\overline{\otimes}\mathcal M,Tr \otimes\tau)} <\infty,\]
and the above quantity is called
the completely bounded (in short c.b.) norm of $T$. Also, $T$ is a \emph{complete contraction} (resp. \emph{complete isometry}) if $I_{S_p^n}\otimes T$ is a contraction (resp. isometry) for all $n\geq 1.$ We say that $T$ is \emph{$n$-contractive} (resp. \emph{$n$-isometry}) for some $n$ if $I_{S_p^n}\otimes T$ is a contraction (resp. {isometry}). We refer to \cite{pisierxu2003nclp} for a comprehensive study of noncommutative $L_p$-spaces and related topics.

\subsection{Noncommutative vector-valued  $L_p$-spaces}\label{sub2.2}

It is well-known that maximal norms on noncommutative $L_p$-spaces require special definitions. This is mainly because the notion $\sup_{n\geq 0}|x_n|$ makes no reasonable sense for a sequence of arbitrary operators $(x_n)_{n\geq 0}.$   This difficulty can be overcome by using the theory of noncommutative vector-valued $L_p$-spaces  which was initiated by Pisier \cite{pisier1998ncvectorLp} and improved by Junge \cite{junge02doob}. For $1\leq p\leq\infty,$ let $L_p(\mathcal M;\ell_\infty)$ be the space of all sequences $x=(x_n)_{n\geq 0}$ admitting the following factorization: there are  $a,b\in L_{2p}(\mathcal M)$ and a bounded sequence $(y_n)_{n\geq 0}\subseteq\mathcal M$ such that $x_n=ay_nb$ for $n\geq 0.$ One defines \[\|(x_n)_{n\geq 0}\|_{L_p(\mathcal M;\ell_\infty)}= \inf\Big\{\|a\|_{2p}\sup_{n\geq 0}\|y_n\|_{\infty}\|b\|_{2p}\Big\}\] where the infimum is taken over all possible factorizations. Let us remark that for any positive sequence $x\in L_p(\mathcal M)$ given by $x=(x_n)_{n\geq 0},$ $x$ belongs to $ L_p(\mathcal M;\ell_\infty)$ if and only if there exists $a\in L_p(\mathcal M)_{+}$ such that $x_n\leq a$ for all $n\geq 0.$ In this case, we have \[\|(x_n)_{n\geq 0}\|_{L_p(\mathcal M;\ell_\infty)}=\inf\{\|a\|_p:x_n\leq a, a\in L_p(\mathcal M)_{+}\}.\] Let $\mathbb{N}_0=\mathbb{N}\cup\{0\}.$ The following folkloric truncated description of the maximal norm is often useful. A proof can be found in \cite{jungexu07erg}.
\begin{prop}\label{FINT}Let $1\leq p\leq\infty.$ A sequence $(x_n)_{n\geq 0}\subseteq L_p(\mathcal M)$ belongs to $L_p(\mathcal M;\ell_\infty)$ if and only if $\sup\limits_{\mathbb N_0\supseteq J\ \text{is finite}}\big\|(x_i)_{i\in J}\big\|_{L_p(\mathcal M;\ell_\infty)}<\infty.$ Moreover, we have that \[\|(x_n)_{n\geq 0}\|_{L_p(\mathcal M;\ell_\infty)}=\sup\limits_{\mathbb N_0\supseteq J\ \text{is finite}}\big\|(x_i)_{i\in J}\big\|_{L_p(\mathcal M;\ell_\infty)}.\]
\end{prop}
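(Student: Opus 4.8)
The plan is to deduce the statement from the weak-$*$ compactness afforded by the duality for noncommutative vector-valued spaces. Recall from \cite{junge02doob} that for $1<p\le\infty$ one has the isometric identification $L_p(\mathcal M;\ell_\infty)=\big(L_{p'}(\mathcal M;\ell_1)\big)^*$, where $1/p+1/p'=1$ and the duality bracket is $\langle (u_n)_n,(w_n)_n\rangle=\sum_n\tau(u_nw_n)$; in particular the predual contains, for each $k\in\mathbb N$ and each $w\in L_{p'}(\mathcal M)$, the finitely supported sequence $\iota_k(w)$ equal to $w$ in the $k$-th slot and $0$ elsewhere, with $\|\iota_k(w)\|_{L_{p'}(\mathcal M;\ell_1)}=\|w\|_{p'}$.

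First I would dispose of the elementary inequality. For a fixed finite set $J\subseteq\mathbb N$, restriction to the coordinates in $J$ is a contraction from $L_p(\mathcal M;\ell_\infty)$ onto the corresponding finite-indexed space, directly from the factorization definition $x_n=ay_nb$. Hence, whenever $(x_n)_{n\ge1}\in L_p(\mathcal M;\ell_\infty)$, every truncated norm $\big\|\sup\limits_{i\in J}\!^{+}x_i\big\|_p$ is bounded by $\|(x_n)_{n\ge1}\|_{L_p(\mathcal M;\ell_\infty)}$, so the supremum over finite $J$ is finite and does not exceed $\|(x_n)_{n\ge1}\|_{L_p(\mathcal M;\ell_\infty)}$.

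For the converse, suppose $M:=\sup_{J}\big\|\sup\limits_{i\in J}\!^{+}x_i\big\|_p<\infty$. Consider the truncated sequences $x^{(N)}:=(x_1,\dots,x_N,0,0,\dots)$, each of which lies in $L_p(\mathcal M;\ell_\infty)$ with norm at most $M$. Viewing $(x^{(N)})_N$ as a bounded net in the dual space $\big(L_{p'}(\mathcal M;\ell_1)\big)^*$, the Banach--Alaoglu theorem provides a weak-$*$ cluster point $z=(z_n)_{n\ge1}$ with $\|z\|_{L_p(\mathcal M;\ell_\infty)}\le M$. To identify its coordinates, I would pair $z$ against the predual elements $\iota_k(w)$: since $x^{(N)}_k=x_k$ for all $N\ge k$, the cluster-point property yields $\tau(z_kw)=\tau(x_kw)$ for every $w\in L_{p'}(\mathcal M)$, whence $z_k=x_k$. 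Thus $(x_n)_{n\ge1}=z\in L_p(\mathcal M;\ell_\infty)$ with norm at most $M$, which together with the previous paragraph gives the claimed equality. The endpoint $p=1$ lies outside this duality scale, and I would treat it separately by reducing to positive sequences via the characterization recorded just before the statement: from a bounded family of positive majorants $a_N\ge x_i$ $(i\le N)$ one extracts, by a weak (or monotone) limit in $L_1(\mathcal M)$, a single majorant $a$ with $x_n\le a$ for all $n$ and $\|a\|_1\le M$.

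The main obstacle is conceptual rather than computational: one cannot build the global factorization $x_n=ay_nb$ by naively passing to limits in the finite-section factorizations $x_i=a_Ny_i^{(N)}b_N$, because multiplication is not jointly continuous for the weak or weak-$*$ topologies, so the limiting triple need not factor $x_n$. Routing the argument through the duality sidesteps this entirely, reducing the whole matter to Banach--Alaoglu together with the routine verification that the coordinates of the weak-$*$ limit are the prescribed $x_n$.
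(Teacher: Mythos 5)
Your easy direction is fine, and on the reflexive range your argument is correct: for $1<p<\infty$ the truncations $x^{(N)}$ are uniformly bounded in $L_p(\mathcal M;\ell_\infty)=\bigl(L_{p'}(\mathcal M;\ell_1)\bigr)^*$, Banach--Alaoglu gives a weak-$*$ cluster point, and pairing with the finitely supported predual elements $\iota_k(w)$ correctly identifies its coordinates as $x_k$. Note that the paper itself offers no proof -- it refers to Junge--Xu -- and the argument there runs compactness directly on the factorizations $x_i=a_Jy_i^Jb_J$ rather than through the $\ell_1$--$\ell_\infty$ duality, which is why it treats all exponents $1\le p\le\infty$ at once; your duality route is a genuinely different and tidy alternative, but bear in mind that the duality quoted in the paper is stated only for $1<p<\infty$ (at $p=\infty$ the proposition is in any case immediate from the definition, since one may take $a=b=1$).

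The genuine gap is at $p=1$, which the proposition does cover. Two steps of your sketch fail there. First, ``reducing to positive sequences'' is not a legitimate move: the majorant description $\|(x_n)\|_{L_p(\mathcal M;\ell_\infty)}=\inf\{\|a\|_p:\ x_n\le a\}$ applies \emph{only} to sequences of positive operators, and for a general $x_n\in L_1(\mathcal M)$ the relation $x_n\le a$ does not even make sense (nor is there any bounded passage $(x_n)\mapsto(|x_n|)$ between these maximal spaces), so the case of arbitrary sequences in $L_1$ is simply not addressed. Second, even for positive sequences the extraction of a single majorant is unjustified: $L_1(\mathcal M)=\mathcal M_*$ is not reflexive, so the bounded family $(a_N)$ need not have any weakly convergent subnet, and a ``monotone limit'' is unavailable because the $a_N$ are not increasing and positive operators in a von Neumann algebra admit no lattice suprema, so they cannot be replaced by an increasing family. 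The standard repair -- and the reason the argument cited by the paper closes at every exponent -- is bidual compactness: regard the bounded positive elements (the $a_N$, or in the general case the matrices $\left(\begin{smallmatrix} a_J^2 & x_i\\ x_i^{*} & b_J^2\end{smallmatrix}\right)\ge 0$ encoding the factorizations) as elements of $\mathcal M^{*}$, respectively $M_2(\mathcal M)^{*}$, take a weak-$*$ cluster point there, and then pass to its \emph{normal part}; positivity survives the normal/singular decomposition, and since each $x_i$ is already a normal functional, the limit again dominates (respectively factors) the $x_i$ with entries now in $L_1(\mathcal M)$. Without this or an equivalent device, your $p=1$ case does not close, so as written the proposal proves the proposition only for $1<p\le\infty$.
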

Let $1\leq p<\infty.$ We define $L_p(\mathcal M;\ell_1)$ to be the space of all sequences $x=(x_n)_{n\geq 0}\subseteq L_p(\mathcal M)$ which admits a decomposition \[x_n=\sum_{k\geq 0}u_{kn}^*v_{kn}\] for all $n\geq 0,$ where  $(u_{kn})_{k,n\geq 0}$ and $(v_{kn})_{k,n\geq 0}$ are two families in $L_{2p}(\mathcal M)$ such that 
\[\sum_{k,n\geq 0}u_{kn}^*u_{kn}\in L_p(\mathcal M),\quad \sum_{k,n\geq 0}v_{kn}^*v_{kn}\in L_p(\mathcal M).\] In above all the series are required to converge in $L_p$-norm. We equip the space $L_p(\mathcal M;\ell_1)$ with the norm
\[\|x\|_{L_p(\mathcal M;\ell_1)}=\inf\Big\{\Big\|\sum_{k,n\geq 0}u_{kn}^*u_{kn}\Big\|_p^{\frac{1}{2}} \Big\|\sum_{k,n\geq 0}v_{kn}^*v_{kn}\Big\|_p^{\frac{1}{2}}\Big\},\] where infimum runs over all possible decompositions of $x$ described as above. For any positive sequence $x=(x_n)_{n\geq 0}\in L_p(\mathcal M;\ell_1)$ we have a simpler description of the norm as follows
\[\|x\|_{L_p(\mathcal M;\ell_1)}=\Big\|\sum\limits_{n\geq 0}x_n\Big\|_p.\]  
It is known that both $L_p(\mathcal M;\ell_\infty)$ and $L_p(\mathcal M;\ell_1)$ are Banach spaces. Moreover, we have the following duality fact.
\begin{prop}[\cite{junge02doob}] Let $1<p<\infty.$ Let $\frac{1}{p}+\frac{1}{p^\prime}=1.$ Then we have isometrically $L_p(\mathcal M;\ell_1)^*=L_{p^\prime}(\mathcal M;\ell_\infty)$, with the duality relation given by \[\langle x,y\rangle=\sum_{n\geq 0}\tau(x_ny_n)\] for all $x\in L_p(\mathcal M;\ell_1)$ and $y\in L_{p^\prime}(\mathcal M;\ell_\infty).$
\end{prop}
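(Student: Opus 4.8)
The plan is to realize $L_{p'}(\mathcal M;\ell_1)^*$... more precisely to embed $L_{p'}(\mathcal M;\ell_\infty)$ isometrically into $L_p(\mathcal M;\ell_1)^*$ via the stated bracket, and then to prove that this embedding is onto. The first and easiest step is to check that the pairing is well defined and \emph{contractive}. Fix $y=(y_n)\in L_{p'}(\mathcal M;\ell_\infty)$ with a factorization $y_n=ay_n'b$, where $a,b\in L_{2p'}(\mathcal M)$ and $s:=\sup_n\|y_n'\|_\infty<\infty$, and fix $x=(x_n)\in L_p(\mathcal M;\ell_1)$ with $x_n=\sum_k u_{kn}^*v_{kn}$. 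Since $\frac1{2p}+\frac1{2p'}=\frac12$, each product $u_{kn}b^*$ and $v_{kn}a$ lies in $L_2(\mathcal M)$, and writing $\tau(x_ny_n)=\sum_k\tau\big((u_{kn}b^*)^*(v_{kn}a)\,y_n'\big)$ and using Hölder ($L_1$–$L_\infty$) followed by two Cauchy--Schwarz inequalities gives
\[
|\langle x,y\rangle|\le s\sum_{k,n}\|u_{kn}b^*\|_2\,\|v_{kn}a\|_2\le s\Big(\sum_{k,n}\|u_{kn}b^*\|_2^2\Big)^{\frac12}\Big(\sum_{k,n}\|v_{kn}a\|_2^2\Big)^{\frac12}.
\]
Finally $\sum_{k,n}\|u_{kn}b^*\|_2^2=\tau\big(b(\sum_{k,n}u_{kn}^*u_{kn})b^*\big)\le\|b\|_{2p'}^2\,\big\|\sum_{k,n}u_{kn}^*u_{kn}\big\|_p$, and symmetrically for the other factor; taking the infimum over all factorizations of $x$ and of $y$ yields $|\langle x,y\rangle|\le\|y\|_{L_{p'}(\mathcal M;\ell_\infty)}\,\|x\|_{L_p(\mathcal M;\ell_1)}$.

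Next I would show the embedding is \emph{isometric}, i.e. that the functional norm of $y$ equals $\|y\|_{L_{p'}(\mathcal M;\ell_\infty)}$. Given $\varepsilon>0$, choose a near-optimal factorization $y_n=ay_n'b$ and manufacture a test sequence $x$ in the unit ball of $L_p(\mathcal M;\ell_1)$ using the polar/support data of $a,b$ together with $(y_n')$, so that $\langle x,y\rangle\ge\|y\|_{L_{p'}(\mathcal M;\ell_\infty)}-\varepsilon$; concretely one wants the $u_{kn},v_{kn}$ to be built from $|a|,|b|$ and the partial isometries appearing in $y_n'$ so that $x_n$ realizes $y_n^*/\|y\|$ up to the correct powers. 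Combined with the previous paragraph this identifies $L_{p'}(\mathcal M;\ell_\infty)$ with a closed subspace of $L_p(\mathcal M;\ell_1)^*$.

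It remains to prove \emph{surjectivity}, which is the main obstacle. Given $\varphi\in L_p(\mathcal M;\ell_1)^*$, restricting $\varphi$ to the $n$-th coordinate and invoking the ordinary duality $L_p(\mathcal M)^*=L_{p'}(\mathcal M)$ produces elements $y_n\in L_{p'}(\mathcal M)$ with $\varphi(x)=\sum_n\tau(x_ny_n)$ on finitely supported $x$. The issue is to show that the merely coordinatewise-defined sequence $(y_n)$ actually admits a factorization $y_n=ay_n'b$ with the correct norm bound $\|(y_n)\|_{L_{p'}(\mathcal M;\ell_\infty)}\le\|\varphi\|$. I would reduce to finite blocks: since finitely supported sequences are dense in $L_p(\mathcal M;\ell_1)$, it suffices to bound the truncations $(y_1,\dots,y_N)$ uniformly in $N$, which is precisely where the finite-stage duality $L_p(\mathcal M;\ell_1^N)^*=L_{p'}(\mathcal M;\ell_\infty^N)$ and a \emph{uniform} factorization estimate are needed; a weak-$*$ compactness argument then passes to the limit, and the truncated description of the maximal norm in Proposition \ref{FINT} upgrades the uniform block bounds to membership in $L_{p'}(\mathcal M;\ell_\infty)$ with norm at most $\|\varphi\|$. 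The hard part is thus the finite-dimensional duality together with the extraction of the two-sided multipliers $a,b$ from the functional; an alternative, more structural route that sidesteps this is to embed $L_p(\mathcal M;\ell_1)$ and $L_{p'}(\mathcal M;\ell_\infty)$ as a complemented subspace/quotient pair inside ordinary noncommutative $L_p$- and $L_{p'}$-spaces over $\mathcal M\,\overline{\otimes}\,B(\ell_2)$ and deduce the statement from the self-duality $L_p(\mathcal N)^*=L_{p'}(\mathcal N)$, at the cost of setting up these (standard but delicate) column/diagonal realizations.
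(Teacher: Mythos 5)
Your first step (contractivity of the pairing) is correct, and it is the standard easy half of the duality: the cyclic-trace rewriting $\tau(x_ny_n)=\sum_k\tau\bigl((u_{kn}b^*)^*(v_{kn}a)y_n'\bigr)$ followed by H\"older and Cauchy--Schwarz is exactly right. But both substantive claims are left unproved. The ``isometric embedding'' step is pure hand-waving: in the noncommutative setting there is no analogue of choosing signs or level sets, so one cannot ``manufacture'' a norming sequence in the unit ball of $L_p(\mathcal M;\ell_1)$ from the polar data of an arbitrary (non-optimal) factorization $y_n=ay_n'b$; the standard proofs never obtain isometry this way --- it falls out of the surjectivity-with-norm-control step combined with the uniqueness of the coordinates $y_n$. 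More seriously, your surjectivity argument defers everything to ``the finite-stage duality $L_p(\mathcal M;\ell_1^N)^*=L_{p^\prime}(\mathcal M;\ell_\infty^N)$ and a uniform factorization estimate'' --- but that \emph{is} the theorem. The difficulty is not the passage from finite blocks to infinite sequences (Proposition \ref{FINT} indeed handles that); it is precisely the extraction, from a functional $\varphi$ bounded on $L_p(\mathcal M;\ell_1^N)$, of a factorization $y_n=ay_n'b$ with $\|a\|_{2p^\prime}\sup_n\|y_n'\|_\infty\|b\|_{2p^\prime}\le\|\varphi\|$. This requires a Hahn--Banach separation (Grothendieck--Pietsch type) convexity argument, which is the heart of Junge's proof in \cite{junge02doob} and appears nowhere in your proposal. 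As written, the argument is circular exactly where the content lies. (Note also that the paper itself does not prove this proposition; it quotes it from \cite{junge02doob}, so a proof attempt must be self-contained.)

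The fallback ``structural route'' you offer at the end is not merely delicate but impossible. The space $L_{p^\prime}(\mathcal M;\ell_\infty)$ contains an isometric copy of $\ell_\infty$ whenever $\mathcal M\neq 0$: for fixed $a\in L_{2p^\prime}(\mathcal M)$ and bounded scalars $(\lambda_n)$, the sequence $(\lambda_n a^*a)_n$ has maximal norm $\|a\|_{2p^\prime}^2\sup_n|\lambda_n|$. Hence $L_{p^\prime}(\mathcal M;\ell_\infty)$ is non-reflexive, whereas $L_{p^\prime}(\mathcal N)$ is uniformly convex, hence reflexive, for every von Neumann algebra $\mathcal N$ (in particular $\mathcal N=\mathcal M\overline{\otimes}B(\ell_2)$) and $1<p^\prime<\infty$. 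Since complemented subspaces and quotients of reflexive Banach spaces are reflexive, no realization of $L_{p^\prime}(\mathcal M;\ell_\infty)$ as a complemented subspace or quotient of an ordinary noncommutative $L_{p^\prime}$-space can exist, so this route cannot rescue the missing step.
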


\subsection{Various notions of dilation}
In this subsection, we turn our attention to various   notions of dilations. The study of dilations and $N$-dilations have a long history already for operators on Hilbert spaces (see \cite{sznagyfoics70book,mccarthyshalit13ndilation}), whereas the notion of simultaneous dilation was only recently introduced in \cite{facklergluck19dilation} in the setting of general Banach spaces.
\begin{defn}\label{dide1}Let $1\leq p\leq\infty.$ Let $T:L_p(\mathcal{M},\tau_{\mathcal{M}})\to L_p(\mathcal{M},\tau_{\mathcal{M}})$ be a contraction. We say that $T$ has a \textit{dilation} (resp. \textit{complete dilation}) if there exist  a von Neumann algebra $\mathcal{N}$ with a normal faithful semifinite trace $\tau_{\mathcal{N}},$ contractive  {(resp. completely contractive)} linear maps $Q:L_p(\mathcal{N},\tau_{\mathcal{N}})\to L_p(\mathcal{M},\tau_{\mathcal{M}}),$ $J:L_p(\mathcal{M},\tau_{\mathcal{M}})\to L_p(\mathcal{N},\tau_{\mathcal{N}}),$ and an isometry (resp. complete isometry) $U: L_p(\mathcal{N},\tau_{\mathcal{N}})\to L_p(\mathcal{N},\tau_{\mathcal{N}})$ such that 
	\begin{equation}\label{di}
	T^n=QU^nJ,\quad \forall n\in\mathbb{N}\cup\{0\}. 
	\end{equation}In terms of commutative diagrams, we have
	\begin{displaymath}
	\xymatrix{
		L_p(\mathcal M,\tau_{\mathcal M})\ar[rr]^{T^n}  \ar[d]^{{J}}  &&
		L_p(\mathcal M,\tau_{\mathcal M}) \\
		L_p(\mathcal{N},\tau_{\mathcal{N}}) \ar[rr]^{U^n}&& L_p(\mathcal{N},\tau_{\mathcal{N}})\ar[u]^{{Q}}
	}
	\end{displaymath}
	for all $n\geq 0.$

	We say that $T$ has an \textit{$N$-dilation} if \eqref{di} is true for all $ n\in\{0,1,\dots,N\}$. {We say that $T$ has a \textit{complete $N$-dilation} if \eqref{di} is true for all $ n\in\{0,1,\dots,N\}$ and the isometry $U$ as in \eqref{di} is a complete isometry.}
\end{defn}
\begin{defn}\label{mdil}Let $1\leq p\leq \infty.$ Let $S\subseteq B(L_p(\mathcal{M},\tau_{\mathcal{M}})).$ We say that $S$ has a \textit{simultaneous dilation} (resp. \textit{complete simultaneous dilation}) if  there exist a von Neumann algebra $\mathcal{N}$ with a normal faithful semifinite trace $\tau_{\mathcal{N}},$ contractive (resp. completely contractive) linear maps  $Q:L_p(\mathcal{N},\tau_{\mathcal{N}})\to L_p(\mathcal{M},\tau_{\mathcal{M}}),$ $J:L_p(\mathcal{M},\tau_{\mathcal{M}})\to L_p(\mathcal{N},\tau_{\mathcal{N}}),$ and a set of isometries (resp. complete isometries)  $\mathcal{U}\subseteq L_p(\mathcal{N},\tau_{\mathcal{N}})$ such that for all $n\in\mathbb{N}\cup\{0\}$ and $T_i\in S,$ $1\leq i\leq n,$ there exist  $U_{T_1},U_{T_2},\dots,U_{T_n}\in\mathcal{U}$  such that
	\begin{equation}\label{di1}
	T_1T_2\dots T_n=QU_{T_1}U_{T_2}\dots U_{T_n}J.
	\end{equation}In terms of commutative diagrams, we have
	\begin{displaymath}
	\xymatrix{
		L_p(\mathcal M,\tau_{\mathcal M})\ar[rr]^{T_1\dots T_n}  \ar[d]^{{J}}  &&
		L_p(\mathcal M,\tau_{\mathcal M}) \\
		L_p(\mathcal{N},\tau_{\mathcal{N}}) \ar[rr]^{U_{T_1}\dots U_{T_n}}&& L_p(\mathcal{N},\tau_{\mathcal{N}})\ar[u]^{{Q}}.
	}\end{displaymath} The empty product (i.e. $n=0$) corresponds to the identity operator.

	We say that $S$ has a \emph{simultaneous $N$-dilation} if \eqref{di1} is true for all $n\in\{0,1,\dots,N\}$. We say that $S$ has a \emph{complete simultaneous $N$-dilation} if \eqref{di1} is true for all $n\in\{0,1,\dots,N\}$ and the family $\mathcal U$ consists of complete isometries.
\end{defn}
\begin{rem}Let $1\leq p\leq\infty$. If $S\subseteq B(L_p(\mathcal{M},\tau_{\mathcal{M}}))$ has a simultaneous (resp. complete simultaneous) $N$-dilation for any $N\in\mathbb{N} $, then  for any $n\geq 1$ and $T_1,\dots,T_n\in S$, the operator $T_1\dots T_n$ has a simultaneous (resp. complete simultaneous) $N$-dilation for any $N\in\mathbb{N}.$
\end{rem}

\subsection{Characterization theorems for isometries and complete isometries}
We recall the definition of a Jordan homomorphism.
A complex linear map $J:\mathcal{M}\to\mathcal{N}$ is called a Jordan $*$-homomorphism if $J(x^2)=J(x)^2 $ and $J(x^*)=J(x)^* $ for all $x\in\mathcal{M}.$ It is well-known that in this case $J(xyx)=J(x)J(y)J(x)$ for all $x,y\in \mathcal M$.
\begin{lem}[{\cite{stormer65jordan}}]\label{decom}Let $J:\mathcal{M}\to\mathcal{N}$ be a normal Jordan $*$-homomorphism. Let $\widetilde{\mathcal N}$ denote the von Neumann subalgebra generated by $J(\mathcal M)$ in $\mathcal{N}$. Then there exist two central projections $e,f\in \mathcal Z  (\widetilde{\mathcal N}) $ with $e+f=1_{\widetilde{\mathcal N}} $ such that $x\mapsto J(x)e$ is a $*$-homomorphism and $x\mapsto J(x)f$ is a $*$-anti-homomorphism.
\end{lem}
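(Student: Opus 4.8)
The plan is to work inside $\widetilde{\mathcal N}$, the von Neumann algebra generated by $J(\mathcal M)$, so that centrality is measured in the right algebra, and to isolate two ``defect'' maps measuring the failure of $J$ to be multiplicative, respectively anti-multiplicative. The goal is to show that these two defects are supported on complementary central projections. First I would record the standard Jordan identities: linearizing $J(x^2)=J(x)^2$ gives $J(xy+yx)=J(x)J(y)+J(y)J(x)$, and from this one derives the fundamental triple identity $J(xyx)=J(x)J(y)J(x)$ together with its polarization $J(xyz+zyx)=J(x)J(y)J(z)+J(z)J(y)J(x)$. I then introduce the bi-additive defect maps
\begin{equation*}
u(x,y)=J(xy)-J(x)J(y),\qquad v(x,y)=J(xy)-J(y)J(x).
\end{equation*}
Since $J$ preserves the involution, a direct check gives $u(x,y)^*=u(y^*,x^*)$ and $v(x,y)^*=v(y^*,x^*)$, so the sets $U=\{u(x,y):x,y\in\mathcal M\}$ and $V=\{v(x,y):x,y\in\mathcal M\}$ are each self-adjoint.

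The crux is an orthogonality statement: the weak-$*$ closed two-sided ideals of $\widetilde{\mathcal N}$ generated by $U$ and by $V$ are orthogonal. The elementary seed of this fact is the diagonal identity $u(x,y)\,v(x,y)=0$, which follows by expanding the product and applying the triple identity and its polarization; concretely $J(xy)J(y)J(x)+J(x)J(y)J(xy)=J(xy^2x+xyxy)$, while $J(xy)^2=J(xyxy)$ and $J(x)J(y)^2J(x)=J(xy^2x)$, and these cancel. The main work — and the step I expect to be the principal obstacle — is to upgrade this diagonal identity to the full ideal orthogonality $u(x,y)\,n\,v(x',y')=0$ for every $n\in\widetilde{\mathcal N}$ and all $x,y,x',y'$; by normality it suffices to treat $n$ a finite word in $J(\mathcal M)$, which is handled by polarizing $u(x,y)v(x,y)=0$ separately in each argument and then repeatedly reinserting the triple identity through the word. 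Because a self-adjoint weak-$*$ closed two-sided ideal of a von Neumann algebra has the form $\widetilde{\mathcal N}z$ for a unique central projection $z\in\mathcal Z(\widetilde{\mathcal N})$, this orthogonality produces central projections $z_U,z_V\in\mathcal Z(\widetilde{\mathcal N})$ supporting the two ideals, with $z_Uz_V=0$.

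Finally I would set $f=z_U$ and $e=1-z_U$, so that $e+f=1_{\widetilde{\mathcal N}}$ and both lie in $\mathcal Z(\widetilde{\mathcal N})$. Since $e$ annihilates $U$ we have $J(xy)e=J(x)J(y)e$, and as $e=e^2$ is central this equals $(J(x)e)(J(y)e)$, so $x\mapsto J(x)e$ is multiplicative; it is $*$-preserving because $e=e^*$ is central and $J(x^*)=J(x)^*$, hence a $*$-homomorphism. Dually, since $v(x,y)=v(x,y)z_V$ lies in the $V$-ideal, we get $v(x,y)f=v(x,y)z_Vz_U=0$, that is $J(xy)f=J(y)J(x)f=(J(y)f)(J(x)f)$, so $x\mapsto J(x)f$ is a $*$-anti-homomorphism. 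Normality of the two pieces is inherited directly from that of $J$, which completes the decomposition. The only genuinely delicate point in this scheme is the promotion of the scalar identity $u(x,y)v(x,y)=0$ to the ideal-level orthogonality $z_Uz_V=0$; everything else is bookkeeping with central projections and the involution.
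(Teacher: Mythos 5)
Your scaffolding is correct as far as it goes: the Jordan identities, the diagonal identity $u(x,y)v(x,y)=0$ (your computation is right), the fact that a self-adjoint $\operatorname{w}^*$-closed two-sided ideal of $\widetilde{\mathcal N}$ is of the form $\widetilde{\mathcal N}z$ with $z$ a central projection, and the observation that the lemma is \emph{equivalent} to orthogonality of the two ideals generated by $U$ and $V$. (For the record, the paper offers no proof of this lemma --- it is quoted from St\o rmer --- so your argument must stand alone.) But the step you defer is not a delicate technicality to be dispatched by ``polarizing and reinserting the triple identity''; it is the entire theorem, and the mechanism you describe provably stalls. Polarizing the diagonal identity only yields symmetrized relations such as $u(x,y)v(x',y)+u(x',y)v(x,y)=0$, never a single mixed product. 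More decisively, already for a word of length one,
\begin{equation*}
u(x,y)J(z)v(x,y)=J(xyzxy+xyzyx)-\bigl(J(xy)J(z)J(y)J(x)+J(x)J(y)J(z)J(xy)\bigr),
\end{equation*}
and the bracketed sum is exactly a tetrad pattern $J(a)J(b)J(c)J(d)+J(d)J(c)J(b)J(a)$ with $(a,b,c,d)=(xy,z,y,x)$, for which $abcd+dcba=xyzyx+xyzxy$. So the vanishing you need is precisely the statement that $J$ preserves tetrads. The triple identity and its polarization cannot touch such terms (they require a single common middle factor $J(m)$, whereas here the middles are the \emph{products} $J(z)J(y)$ and $J(y)J(z)$), and it is a classical fact of Jordan theory that tetrads are not Jordan polynomials in their letters; identities of this type simply do not follow formally from the Jordan homomorphism axioms.

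Consequently, no amount of polarization bookkeeping closes the gap; one must use the structure of the target algebra. This is exactly the content of Herstein's theorem on Jordan homomorphisms onto prime rings and of the Baxter--Martindale semiprime version, and in the operator-algebra setting it is what St\o rmer (and, for Jordan isomorphisms, Kadison) actually prove, using irreducibility or primeness arguments, matrix units, or comparison of projections --- none of which appears in your sketch. As it stands, your proposal is a correct reduction of the lemma to an unproven statement (the ideal orthogonality $z_Uz_V=0$), together with a proposed method for that statement that cannot work as described; the mathematical core of the proof is missing.
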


We should warn the reader that in the above theorem, $J(\mathcal M )$ is in general \emph{not} necessarily a von Neumann subalgebra of $\mathcal{N}$. However it is stable under the usual Jordan product and is still a $\operatorname{w}^*$-closed subspace of $\mathcal{N}$. We refer to \cite[Section 4.5]{hanchestormer84jordan} and the references therein for more details.

The following structural description of isometries and complete isometries will be frequently used. We refer to \cite{lamperti58isom} for the classical case.
\begin{thm}[\cite{yeadon81isom,jungeruansherman05isom}]\label{ISOthm}
	Let $1\leq p\neq 2<\infty.$ Let $T:L_p(\mathcal M,\tau_{\mathcal M})\to L_p(\mathcal N,\tau_{\mathcal N})$ be a bounded operator. Then $T$ is an isometry if and only if there exist uniquely a normal Jordan $*$-monomorphism  $J:\mathcal M\to \mathcal N,$ a partial isometry $w\in \mathcal N,$ and a positive self-adjoint operator $b$ affiliated with $\mathcal N,$ such that the following hold:
	\begin{enumerate}
		\item[(i)]$w^*w=s(b)=J(1);$
		\item[(ii)] Every spectral projection of $b$ commutes with $J(x)$ for all $x\in\mathcal M;$
		\item[(iii)] $T(x)=wbJ(x)$ for all $x\in\mathcal S(\mathcal M);$
		\item[(iv)]$\tau_{\mathcal N}(b^pJ(x))=\tau_{\mathcal M}(x)$ for all $x\in\mathcal M_+.$
	\end{enumerate}
	Moreover, $T$ is a complete isometry if and only if the Jordan $*$-monomorphism $J$ as above is multiplicative.
\end{thm}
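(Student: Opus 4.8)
The statement has two directions together with the refinement for complete isometries, and the plan is to treat the easy sufficiency first, then the structural necessity (which is the substance), and finally bootstrap to the complete-isometry case.

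For the \emph{sufficiency} direction I would simply compute. Given $J,w,b$ satisfying (i)--(iv), set $T(x)=wbJ(x)$. Using $w^*w=s(b)=J(1)$ one gets $T(x)^*T(x)=J(x)^*b\,w^*w\,b\,J(x)=J(x)^*b^2J(x)$, and since by (ii) every spectral projection of $b$ commutes with the range of $J$ (hence with the von Neumann algebra it generates), $b$ commutes with $J(x)$ and with the central projections $e,f$ furnished by Lemma \ref{decom}. Splitting $J$ into its $*$-homomorphism part $\phi_e=J(\cdot)e$ and $*$-anti-homomorphism part $\phi_f=J(\cdot)f$, one obtains $|T(x)|^p=b^p\bigl(J(|x|^p)e+J(|x^*|^p)f\bigr)$, the appearance of $|x^*|^p$ on the $f$-part reflecting the order reversal of an anti-homomorphism. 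Taking the $\tau_{\mathcal N}$-trace and invoking (iv) handles the $e$-part directly; for the $f$-part the functional $y\mapsto\tau_{\mathcal N}(b^p\phi_f(y))$ is itself a trace on $\mathcal M$, so it does not distinguish $x^*x$ from $xx^*$, and one again reduces to (iv). Thus $\|T(x)\|_p^p=\tau_{\mathcal M}(|x|^p)=\|x\|_p^p$ and $T$ is an isometry.

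The \emph{necessity} direction is the heart of the matter, and I would follow Yeadon's route. The crucial first step is to show that an isometry \emph{separates supports}, i.e. is Lamperti: for positive $x,y\in\mathcal S(\mathcal M)$ with $xy=0$ one has $(Tx)^*(Ty)=(Tx)(Ty)^*=0$. This rests on an equality case of a noncommutative Clarkson/uniform-convexity inequality valid precisely because $p\neq2$: the functional equation $\|a+b\|_p^p+\|a-b\|_p^p=2\|a\|_p^p+2\|b\|_p^p$ characterizes two-sided orthogonality $a^*b=ab^*=0$, and since $T$ preserves norms it preserves this equation and hence orthogonality. Establishing this equality case rigorously in $L_p(\mathcal N)$ is the main obstacle, as it cannot be reduced pointwise to the scalar identity. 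Granting support separation, I would define $J$ on the projection lattice: for a $\tau$-finite projection $e$ the positive element $|Te|$ has a range projection $J(e)\in\mathcal N$, and additivity of $T$ on orthogonal projections together with the \emph{two-sided} orthogonality forces $e\mapsto J(e)$ to be an orthoadditive, normal, projection-valued map extending to a normal Jordan $*$-homomorphism $J:\mathcal M\to\mathcal N$ --- the Jordan rather than associative structure being exactly the imprint of symmetric support separation. One then recovers $b$ from the relation $|Te|=bJ(e)$ and $w$ from the polar decomposition of $T$, checks (i)--(iii) directly, and deduces (iv) from norm preservation on projections, $\tau_{\mathcal N}(b^pJ(e))=\|Te\|_p^p=\tau_{\mathcal M}(e)$, extended by linearity and normality. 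The semifinite case is handled by exhausting $1$ by $\tau$-finite projections and passing to the limit, while uniqueness follows since $J$, $b$, and $w$ are each read off from $T$ by the formulas just indicated.

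Finally, for the \emph{complete isometry} refinement I would apply the scalar structure theorem to each amplification $I_{S_p^n}\otimes T$ and compare with the decomposition of Lemma \ref{decom}. After amplification, the anti-homomorphic summand $\phi_f$ manifests through the matrix transpose on $S_p^n$; since transposition is isometric but \emph{not} completely isometric on $S_p^n$ for $p\neq2$ (its cb-norm grows with $n$), complete isometry of $T$ forces the central projection $f$ to vanish, so that $J$ is multiplicative. Conversely, if $J$ is multiplicative the sufficiency computation runs verbatim at every matrix level, yielding complete isometry. I expect the support-separation equality case to be the single genuinely hard estimate; everything downstream is structural bookkeeping with the Jordan decomposition and the trace identity (iv).
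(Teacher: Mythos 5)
This theorem is not proved in the paper --- it is quoted from Yeadon \cite{yeadon81isom} and Junge--Ruan--Sherman \cite{jungeruansherman05isom} --- and your outline reconstructs precisely the strategy of those sources, which is also the skeleton the paper itself adapts for its own generalizations (Theorem \ref{charac} for Lamperti operators, Theorem \ref{scharac} for the multiplicativity criterion): the equality case of Yeadon's Clarkson-type identity giving two-sided support separation for $p\neq 2$, the construction of $J$ on the projection lattice with $b$ and $w$ read off from polar decompositions, exhaustion by $\tau$-finite projections in the semifinite case, and an amplification argument for the complete-isometry refinement, while your sufficiency computation matches the paper's Remark \ref{kotokikorarchilobaki} and Lemma \ref{lem:d}. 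One imprecision in your last step: complete isometry does not literally force the central projection $f$ to vanish, since the anti-homomorphic part may survive on a corner with commutative image, where $J$ is still multiplicative; the clean conclusion, obtained in the paper's proof of Theorem \ref{scharac} by evaluating $I_{S_p^2}\otimes T$ on off-diagonal matrix units, is the identity $J(xy)=J(x)J(y)$ directly, and your transpose-cb-norm heuristic becomes rigorous only after such a localization.
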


The following property is kindly communicated to us by Arhancet.
\begin{thm}\cite{arhancet20characterization} \label{thm:cedric}
	Let $1\leq p<\infty.$	Let $T:L_p(\mathcal M,\tau_{\mathcal M})\to L_p(\mathcal N,\tau_{\mathcal N})$ be a positive isometry of the form $T=wbJ$ where $w,b,J$ are objects as in Theorem \ref{ISOthm}. Then $T$ is completely positive if and only if it is $2$-positive if and only if the Jordan $*$-monomorphism $J$ is multiplicative. 
\end{thm}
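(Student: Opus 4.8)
The plan is to prove the chain $\text{(CP)} \Rightarrow \text{($2$-positive)} \Rightarrow \text{($J$ multiplicative)} \Rightarrow \text{(CP)}$. The first implication is immediate, since complete positivity entails $n$-positivity for every $n$. The whole argument rests on a preliminary normalization: positivity should force the partial isometry $w$ to be trivial. More precisely, I would first show that $w=J(1)=s(b)$, so that $T$ takes the simplified form $T(x)=bJ(x)=b^{1/2}J(x)b^{1/2}$, with $b^{1/2}$ commuting with the range of $J$.

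To obtain this normalization, note that a positive map sends self-adjoint elements to self-adjoint elements, so for self-adjoint $x\in\mathcal S(\mathcal M)$ we have $wbJ(x)=(wbJ(x))^*=J(x)bw^*$; using that $b$ commutes with $J(x)$ this reads $wJ(x)b=J(x)bw^*$. Choosing an increasing net of $\tau$-finite projections $e_\alpha\nearrow 1$ and passing to the limit in measure, with $J(e_\alpha)\nearrow s(b)$, gives $wb=bw^*=:d$, a self-adjoint operator which, being a limit of the positive elements $T(e_\alpha)$, is positive. Then $d^*d=bw^*wb=b\,s(b)\,b=b^2$, so $d=|d|=b$, that is $wb=b$; since $\operatorname{ran} b$ is dense in $\operatorname{ran} s(b)$ and $w=w\,s(b)$, this forces $w=s(b)=J(1)$, as claimed.

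With $T(x)=b^{1/2}J(x)b^{1/2}$ in hand, the amplifications factor as
\[
(I_{S_p^n}\otimes T)(X)=(1\otimes b^{1/2})\,(\mathrm{id}_{M_n}\otimes J)(X)\,(1\otimes b^{1/2}),\qquad X\in L_p(M_n\overline{\otimes}\mathcal M).
\]
If $J$ is multiplicative, then $\mathrm{id}_{M_n}\otimes J$ is a $*$-homomorphism, hence positive, and conjugation by the positive operator $1\otimes b^{1/2}$ preserves positivity; thus $I_{S_p^n}\otimes T\ge 0$ for all $n$ and $T$ is completely positive, giving $\text{($J$ multiplicative)}\Rightarrow\text{(CP)}$. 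For the remaining implication I would argue by contraposition. Since $b^{1/2}$ commutes with $J(\mathcal M)$, its spectral projections $q=\chi_{[\delta,\delta^{-1}]}(b)$ also commute with $J(\mathcal M)$, and on the corner cut out by $q$ the factor $b^{1/2}$ is bounded below; compressing $(I_{S_p^2}\otimes T)(X)\ge 0$ by $1\otimes q$ and conjugating back by $(b^{1/2}q)^{-1}$ shows that $2$-positivity of $T$ forces $(\mathrm{id}_{M_2}\otimes J)(X)\,(1\otimes q)\ge 0$. Letting $\delta\to 0$ and using $\sup_\delta q=s(b)=J(1)$, together with the fact that each $J(x_{ij})$ is supported under $J(1)$, yields $(\mathrm{id}_{M_2}\otimes J)(X)\ge 0$; hence $J$ is $2$-positive.

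It then remains to establish the key algebraic fact, which I expect to be the main obstacle: a normal $2$-positive Jordan $*$-homomorphism $J$ is multiplicative. By Lemma \ref{decom}, $J$ splits into a $*$-homomorphism on a central projection $e$ and a $*$-anti-homomorphism on $f=1-e$, and multiplicativity amounts to having no genuinely anti-multiplicative part. If $J(\mathcal M)f$ is commutative, the anti-homomorphism is automatically a homomorphism and can be absorbed into the $e$-part, so it causes no obstruction. If instead $f\neq 0$ and $J(\mathcal M)f$ is noncommutative, there is a $2\times 2$ system of matrix units in $\mathcal M$ on which $J(\cdot)f$ acts, up to a representation, as the transpose; testing $\mathrm{id}_{M_2}\otimes J$ against the positive element $\sum_{i,j}e_{ij}\otimes e_{ij}$ would then produce a compression of the swap operator, which is not positive, contradicting $2$-positivity. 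The delicate points I anticipate are making the passage from $T$ to $J$ rigorous at the level of $L_0$/affiliated operators rather than bounded operators, and locating the matrix subsystem witnessing the transpose obstruction intrinsically inside $\mathcal M$; both should be manageable using normality of $J$ and standard spectral truncation.
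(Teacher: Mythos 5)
The paper offers no proof of Theorem \ref{thm:cedric} for you to be compared against: it is stated as a result communicated by Arhancet, to appear in his forthcoming paper. Judged on its own merits, your scheme --- (CP) $\Rightarrow$ ($2$-positive) $\Rightarrow$ ($J$ multiplicative) $\Rightarrow$ (CP), via the normalization $w=J(1)=s(b)$, the factorization $(I_{S_p^n}\otimes T)(X)=(1\otimes b^{1/2})(\mathrm{id}_{M_n}\otimes J)(X)(1\otimes b^{1/2})$, the spectral-truncation conjugation transferring $2$-positivity from $T$ to $J$, and the flip (partial transpose) obstruction showing a normal $2$-positive Jordan $*$-monomorphism must be multiplicative --- is correct and is very plausibly the intended argument. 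Note also that the normalization need not be reproved: Theorem \ref{charac}(i) records exactly $w=J(1)=s(b)$ for positive Lamperti operators, every positive isometry is Lamperti (Remark \ref{arokoto}(i) together with Remark \ref{isothmm1}), and Proposition \ref{prop:extension} already invokes the form $T=bJ$ on this basis; quoting it would shorten your proof.

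Two points need repair, both of which you partially flagged. First, your ``limit in measure'' is not available in general: when $\tau_{\mathcal M}(1)=\infty$ one has $\tau_{\mathcal N}(b^pJ(1))=\tau_{\mathcal M}(1)=\infty$, and $b$ need not be $\tau_{\mathcal N}$-measurable (already for commutative weighted composition isometries, $b$ can exceed any $\lambda>0$ on a projection of infinite trace), so the net $T(e_\alpha)$ has no limit in $L_0(\mathcal N)$ and the identity $wb=bw^*$ cannot be extracted this way. The fix is cornerwise and avoids limits entirely: for a $\tau$-finite projection $e$, positivity gives $T(e)=|T(e)|$, and $|T(e)|^2=J(e)bw^*wbJ(e)=J(e)b^2J(e)=(bJ(e))^2$, whence $wbJ(e)=bJ(e)$; since $s(bJ(e))=J(e)$, this gives $wJ(e)=J(e)$, and taking the supremum over $e$ together with $w=ws(b)=wJ(1)$ yields $w=J(1)=s(b)$. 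Second, the matrix-unit localization in your last step should be made precise as follows: the kernel of the anti-homomorphic part $J_2=J(\cdot)f$ is a $\sigma$-weakly closed two-sided ideal, hence equals $z\mathcal M$ for a central projection $z$; if $J_2$ has noncommutative range then $(1-z)\mathcal M$ is noncommutative, so it contains a non-central projection and therefore two orthogonal, equivalent, nonzero (and $\tau$-finite) subprojections, giving a $2\times2$ system of matrix units $(e_{ij})$ with $f_{ij}\coloneqq J_2(e_{ji})$ again matrix units. Then $X=\sum_{i,j}\epsilon_{ij}\otimes e_{ij}\geq 0$ (it equals $R^*R$ with $R$ the first-row matrix), while $(1\otimes f)(\mathrm{id}_{M_2}\otimes J)(X)(1\otimes f)=\sum_{i,j}\epsilon_{ij}\otimes f_{ji}$ is, inside $M_2\otimes C^*(\{f_{ij}\})\cong M_2\otimes M_2$, the flip operator, which is not positive; since positivity is intrinsic to $C^*$-subalgebras, this contradicts the $2$-positivity of $J$ obtained in your truncation step. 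If instead $J_2$ has commutative range it is simultaneously a homomorphism and $J(x)J(y)=J_1(xy)+J_2(xy)=J(xy)$, as you note. With these repairs your argument is complete.
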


\section{Lamperti operators on noncommutative $L_p$-spaces}\label{SLAM} In this section, we establish some elementary properties and prove two structural theorems for Lamperti and completely Lamperti operators respectively. Our study is motivated by the argument for the particular case of isometries, see for instance \cite{yeadon81isom}.

Let us start with some useful  properties of Lamperti operators. In the commutative setting, similar results were established in \cite{kan78erglamperti} (see \cite{kan79thesis} for detailed proofs). 
Before the discussion we recall the following elementary fact.
\begin{lem}\label{1.10}Let $1\leq p<\infty.$ Let $x\in L_p(\mathcal M)_{+}.$ Then there exists a sequence $(x_n)_{n\geq 1}\subseteq \mathcal{S}(\mathcal  M)_{+}$ such that $x_n\leq x$, $\lim\limits_{n\to\infty}\|x_n-x\|_p=0$ and $s(x_n)\uparrow s(x).$ Moreover, if $y\in L_p(\mathcal M)_{+}$ satisfies $xy=0,$ then we can choose a sequence $(y_n)_{n\geq 1}\subseteq \mathcal{S}(\mathcal M)$ as described for $x$ such that $x_ny_n=0$ for all $n\geq 1.$ 
\end{lem}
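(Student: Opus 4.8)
The plan is to obtain $(x_n)$ by a two-sided spectral truncation of $x$. Let $x=\int_0^\infty\lambda\,dE_\lambda$ be the spectral resolution of the positive self-adjoint operator $x$ affiliated with $\mathcal M$, and set $p_n\coloneqq\chi_{(1/n,\,n]}(x)$ and $x_n\coloneqq xp_n=p_nxp_n$. Since $\lambda\mapsto\lambda$ is bounded by $n$ on $(1/n,n]$, each $x_n$ is a positive element of $\mathcal M$; as $x_n$ commutes with the projection $p_n$ one has $x-x_n=x(1-p_n)\ge 0$, i.e. $x_n\le x$. To place $x_n$ in $\mathcal S(\mathcal M)_+$ I must check that $s(x_n)$ is $\tau$-finite: on the range of $p_n$ the operator $x$ is bounded below by $1/n$, so $s(x_n)=p_n$ and $x^p\ge n^{-p}p_n$, whence $\tau(p_n)\le n^p\tau(x^p)<\infty$.

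Next I would verify the three remaining properties. The projections $p_n$ increase with $\bigvee_n p_n=\chi_{(0,\infty)}(x)=s(x)$, which gives $s(x_n)=p_n\uparrow s(x)$. For the $L_p$-convergence I would use normality of $\tau$: writing $\|x-x_n\|_p^p=\tau\big((x(1-p_n))^p\big)=\tau(x^p)-\tau(x^pp_n)$ and noting $x^pp_n\uparrow x^p$, normality yields $\tau(x^pp_n)\uparrow\tau(x^p)$, so $\|x-x_n\|_p\to 0$. This settles the first assertion for every $1\le p<\infty$.

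For the \emph{moreover} part the crux is that $xy=0$ forces orthogonal supports, $s(x)s(y)=0$; once this is known the conclusion is automatic. To prove it I would use the bounded function $g_\varepsilon(\lambda)=\lambda^{-1}\chi_{(\varepsilon,\infty)}(\lambda)$, for which $g_\varepsilon(x)\in\mathcal M$ and $g_\varepsilon(x)\,x=\chi_{(\varepsilon,\infty)}(x)$; then $\chi_{(\varepsilon,\infty)}(x)\,y=g_\varepsilon(x)(xy)=0$, and letting $\varepsilon\downarrow 0$ gives $s(x)y=0$, hence $y\,s(x)=0$ by taking adjoints. Running the same argument with the roles of $x$ and $y$ exchanged produces $s(y)s(x)=0$. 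Finally, defining $y_n$ by the same truncation applied to $y$, one has $s(x_n)=p_n\le s(x)$ and $s(y_n)\le s(y)$, so $s(x_n)s(y_n)=0$ and therefore $x_ny_n=x_n\big(s(x_n)s(y_n)\big)y_n=0$.

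The genuinely delicate points are the manipulations with unbounded measurable operators, which I would justify throughout via the Borel functional calculus for the positive self-adjoint operators $x,y$ affiliated with $\mathcal M$, together with the normality of $\tau$ in the convergence step. The orthogonality $s(x)s(y)=0$ is the only nontrivial ingredient of the second statement, after which the relation $x_ny_n=0$ follows with no extra work.
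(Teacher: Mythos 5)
Your proof is correct and is essentially the paper's argument made explicit: the paper reduces the first assertion to the commutative case via the abelian von Neumann subalgebra generated by the spectral resolution of $x$, which amounts precisely to your truncations $x_n = x\chi_{(1/n,n]}(x)$, and for the second assertion it invokes exactly the functional-calculus equivalence $xy=0 \Leftrightarrow s(x)s(y)=0$ that you verify with $g_\varepsilon$. All your steps (the bound $\tau(p_n)\le n^p\tau(x^p)$, normality of $\tau$ for the $L_p$-convergence, and the support-orthogonality argument) are sound.
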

\begin{proof} The first assertion follows from the corresponding commutative case by considering the abelian von Neumann subalgebra generated by the spectral resolution of $x$. For the second assertion, it suffices to notice that if $xy=0$ for $x,y\in L_p (\mathcal M )_+$, then $s(x)s(y)=0$ by a standard argument of functional calculus, and vice versa.
\end{proof}	

The lemma immediately yields the following property.
\begin{prop}\label{NOTS}Let $1\leq p<\infty.$ A positive bounded linear map $T:L_p(\mathcal{M})\to L_p(\mathcal{M})$ is 
	Lamperti if and only if for any $x,y\in L_p(\mathcal  M)_+$ with $xy=0,$ we have $TxTy=0.$ In this case we have
	\[|Tx|=T(|x|),\quad x=x^*,\ x\in L_p(\mathcal{M}).\]
		
	In particular, if both $T_1$ and $T_2$ are positive Lamperti operators on $L_p	(\mathcal{M})$, then $T_1 T_2$ is also positive Lamperti.
\end{prop}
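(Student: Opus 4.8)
The positivity of $T$ simplifies the definition considerably: if $e,f$ are $\tau$-finite projections with $ef=0$, then $Te,Tf\in L_p(\mathcal M)_+$ are self-adjoint, so both defining relations $(Te)^*Tf=Te(Tf)^*=0$ collapse to the single condition $Te\cdot Tf=0$. Hence for a positive $T$ the Lamperti property is exactly the requirement that $T$ send orthogonal $\tau$-finite projections to elements with orthogonal supports. The direction $(\Leftarrow)$ is then immediate: $\tau$-finite projections lie in $\mathcal S(\mathcal M)_+\subseteq L_p(\mathcal M)_+$, and $ef=0$ is precisely the product condition $e\cdot f=0$, so the hypothesis yields $Te\cdot Tf=0$ at once.

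For the forward direction I would upgrade the relation $Te\cdot Tf=0$ from projections to all orthogonal positive $L_p$-elements through three approximation stages, taking every limit in the topology of convergence in measure, which is implied by $L_p$-convergence and for which multiplication in $L_0(\mathcal M)$ is jointly continuous, so that a limit of vanishing products vanishes. First, by positivity and linearity, if $x=\sum_i\lambda_i e_i$ and $y=\sum_j\mu_j f_j$ are positive simple elements with $e_if_j=0$ for all $i,j$, then $Tx\cdot Ty=\sum_{i,j}\lambda_i\mu_j\,Te_i\cdot Tf_j=0$. Second, any $x,y\in\mathcal S(\mathcal M)_+$ with $xy=0$ have orthogonal supports $s(x)s(y)=0$ by Lemma \ref{1.10}; approximating them in $L_p$-norm by positive simple functions built from their spectral resolutions keeps all approximants supported under $s(x)$ and $s(y)$ respectively, hence orthogonal, so the previous step together with a passage to the limit gives $Tx\cdot Ty=0$. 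Third, for general $x,y\in L_p(\mathcal M)_+$ with $xy=0$, Lemma \ref{1.10} furnishes approximating sequences $x_n,y_n\in\mathcal S(\mathcal M)_+$ with $x_ny_n=0$ and $x_n\to x$, $y_n\to y$ in $L_p$; applying the second step and letting $n\to\infty$ yields $Tx\cdot Ty=0$. This establishes the stated characterization.

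The two remaining assertions will then follow quickly. For self-adjoint $x\in L_p(\mathcal M)$ I would write $x=x_+-x_-$ with $x_\pm\in L_p(\mathcal M)_+$ and $x_+x_-=0$, so that $|x|=x_++x_-$; the characterization just proved makes $Tx_+$ and $Tx_-$ positive with $(Tx_+)(Tx_-)=0$, and for any two positive elements $a,b$ with $ab=0$ (hence $ba=0$) one computes $(a-b)^2=a^2+b^2=(a+b)^2$, whence $|a-b|=a+b$. Taking $a=Tx_+$ and $b=Tx_-$ gives $|Tx|=Tx_++Tx_-=T(|x|)$. For the composition, note that $T_1T_2$ is positive, and for $x,y\in L_p(\mathcal M)_+$ with $xy=0$ the characterization applied to $T_2$ gives $(T_2x)(T_2y)=0$ with $T_2x,T_2y\in L_p(\mathcal M)_+$, and then applied to $T_1$ gives $(T_1T_2x)(T_1T_2y)=0$; thus $T_1T_2$ is Lamperti.

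The main obstacle is the forward implication: bridging the gap between the definition, which only constrains $T$ on orthogonal projections, and the far stronger conclusion about arbitrary orthogonal positive elements. The delicate point is that $L_p$-convergence of the approximants does not by itself control convergence of their products when $p<2$, which is exactly why I would phrase all the limiting arguments in terms of convergence in measure, where the joint continuity of multiplication on $L_0(\mathcal M)$ renders the vanishing of the products stable under passage to the limit.
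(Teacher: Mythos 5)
Your proof is correct, but its engine is genuinely different from the paper's. Both arguments share the same skeleton: the backward direction is trivial by positivity, the forward direction reduces via Lemma \ref{1.10} to elements of $\mathcal S(\mathcal M)_+$ with orthogonal supports and then to linear combinations of orthogonal $\tau$-finite projections, and the assertions about $|Tx|=T(|x|)$ and about compositions follow formally from the characterization. The difference lies in how disjointness is passed to the limit. You take limits of the products themselves, using that $L_p$-convergence implies convergence in measure and that multiplication on $L_0(\mathcal M)$ is jointly continuous for the measure topology (Nelson's theorem that the $\tau$-measurable operators form a complete Hausdorff topological $*$-algebra), so that $Tx_nTy_n=0$ forces $TxTy=0$ in the limit. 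The paper never takes a limit of products: instead it encodes the disjointness $Tx_nTy_n=Ty_nTx_n=0$ in the Clarkson-type norm identity $\|Tx_n+Ty_n\|_p^p+\|Tx_n-Ty_n\|_p^p=2(\|Tx_n\|_p^p+\|Ty_n\|_p^p)$, passes to the limit in the (continuous) $L_p$-norms, and then decodes disjointness from the limiting identity via Yeadon's theorem \cite[Theorem 1]{yeadon81isom} when $p\neq 2$, supplemented by a separate faithfulness-of-trace argument at $p=2$ where that characterization is unavailable. Your route buys uniformity in $p$ (no case split at $p=2$) and makes explicit the approximation by simple elements that the paper compresses into ``we can easily verify''; its cost is the reliance on the topological $*$-algebra structure of $L_0(\mathcal M)$, a standard but nontrivial external fact that the paper's proof avoids by staying entirely inside $L_p$-norm computations. (Incidentally, your caveat that products are uncontrollable in $L_p$ for $p<2$ is slightly overstated: the noncommutative H\"older inequality $\|ab\|_{p/2}\leq\|a\|_p\|b\|_p$ holds for all $p$, so the products also converge in the $L_{p/2}$-quasi-norm, which would serve equally well; but your measure-topology argument is perfectly sound as written.)
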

\begin{proof}One direction is clear. Now  let us begin with a positive Lamperti operator $T:L_p(\mathcal{M})\to L_p(\mathcal{M}).$ Let $x,y\in L_p(\mathcal  M)_+$ with $xy=0$. Using Lemma \ref{1.10}, we obtain sequences $(x_n)_{n\geq 1},(y_n)_{n\geq 1}$ in $\mathcal{S}(\mathcal  M)_+$ such that $\|x_n-x\|_p\to 0$ and $\|y_n-y\|_p\to 0$ and $x_ny_n=0$ for all $n\geq 1.$ Since $T$ is Lamperti, we can easily verify that $Tx_nTy_n=Ty_nTx_n=0$ for all $n\in\mathbb{N}$. Therefore, by \cite[Theorem 1]{yeadon81isom} for $p\neq 2$ and by the parallelogram law for $p=2$ we have \[\|Tx_n+Ty_n\|_p^p+\|Tx_n-Ty_n\|_p^p=2(\|Tx_n\|_p^p+\|Ty_n\|_p^p).\] 
	Taking limit, we have
	\[\|Tx +Ty \|_p^p+\|Tx -Ty \|_p^p=2(\|Tx \|_p^p+\|Ty \|_p^p).\]  
	For $p\neq 2$, again applying \cite[Theorem 1]{yeadon81isom}  we obtain that $TxTy=TyTx=0.$
	For $p=2$,	the above equality in turn implies $\tau (TxTy)=0.$ Thus, $(Tx)^{\frac{1}{2}} Ty (Tx)^{\frac{1}{2}} =0 $. In other words we have
	\[ ( (Ty)^{\frac{1}{2}} (Tx)^{\frac{1}{2}} )^* ( (Ty)^{\frac{1}{2}} (Tx)^{\frac{1}{2}}) =0,\]
	whence
	$ (Ty)^{\frac{1}{2}} (Tx)^{\frac{1}{2}} =0.$
	Therefore, we conclude
	$ TxTy=0$.
	
	Let $x\in L_p (\mathcal M)$ be a self-adjoint element. Decompose $x$ as $x=x^+-x^-$. Since $x^+x^{-}=0,$ we see that   $T(x^+){T(x^-)}=0.$ This implies that $|T(x)|=T(x^+)+T(x^-)=T(|x|).$
	This completes the proof of the proposition.
\end{proof}

Now we state the main result of this section.	

\begin{thm}\label{charac} Let $1\leq p<\infty.$ Let $T:L_p(\mathcal{M},\tau )\to L_p(\mathcal{M},\tau )$ be a Lamperti operator with norm $C$. Then  there exist, uniquely, a partial isometry $w\in\mathcal{M},$ a  positive self-adjoint operator $b$ affiliated with $\mathcal{M}$ and a normal Jordan $*$-homomorphism $J:\mathcal{M}\to\mathcal{M},$ such that
	\begin{enumerate}
		\item $w^*w=J(1)=s(b)$; moreover we have $w=J(1)=s(b)$ if additionally $T$ is  positive;
		\item Every spectral projection of $b$ commutes with $J(x)$ for all $x\in\mathcal{M};$
		\item $T(x)=wbJ(x),$ $x\in\mathcal{S}(\mathcal{M});$
		\item We have  $\tau (b^pJ(x))\leq C\tau (x) $ for all $x\in \mathcal{M}_+$; if additionally $T$ is isometric, then the equality holds with $C=1$.
	\end{enumerate}
\end{thm}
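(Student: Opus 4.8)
The plan is to follow the route of Yeadon's proof of the isometric case (Theorem \ref{ISOthm}), replacing the isometry hypothesis by the separating property and isolating the three data $J$, $b$, $w$ one at a time. As in the proof of Proposition \ref{NOTS}, the whole argument rests on translating disjointness into a norm identity: for $p\neq 2$ one uses the sharp equality of \cite[Theorem 1]{yeadon81isom}, namely that $\xi^*\eta=\xi\eta^*=0$ is equivalent to $\|\xi+\eta\|_p^p+\|\xi-\eta\|_p^p=2(\|\xi\|_p^p+\|\eta\|_p^p)$, while for $p=2$ one uses the parallelogram law. The key consequence is that if $e,f$ are orthogonal $\tau$-finite projections, then $Te$ and $Tf$ have orthogonal \emph{left} supports (from $(Te)^*Tf=0$) and orthogonal \emph{right} supports (from $Te(Tf)^*=0$). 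Accordingly I would define $J$ on $\tau$-finite projections by letting $J(e)$ be the right support of $Te$, i.e. $J(e)=s\big((Te)^*Te\big)$, which is automatically a projection.

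\emph{Construction of $J$.} First I would check that $J$ is orthoadditive and orthogonality-preserving: for orthogonal $e,f$ the vanishing of the cross terms gives $|T(e+f)|^2=|Te|^2+|Tf|^2$ with mutually orthogonal supports, whence $J(e+f)=J(e)+J(f)$ and $J(e)\perp J(f)$. The crucial algebraic point is that for \emph{commuting} projections $e,f$ one has $J(e)J(f)=J(ef)$: writing $e=ef+e(1-f)$ and $f=ef+(1-e)f$ as sums of mutually orthogonal projections and using additivity together with the orthogonality of the images, all mixed products drop out and only $J(ef)^2=J(ef)$ survives. Hence $J$ is multiplicative on every abelian subalgebra; after extending it to $\mathcal M_{sa}$ by spectral integration against the pushed-forward projection-valued measure $e\mapsto J(e)$, and complex-linearly to $\mathcal M$, multiplicativity on the abelian algebra $W^*(x)$ generated by a self-adjoint $x$ yields $J(x^2)=J(x)^2$, i.e. $J$ is a Jordan $*$-homomorphism. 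Normality follows from the $L_p$-continuity of $T$, which forces $r(Te_\alpha)\uparrow r(Te)$ along $e_\alpha\uparrow e$; this also defines $J(1):=\sup_{e}J(e)$.

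\emph{Construction of $b$ and $w$.} The orthoadditive family $\mu(e):=|Te|^2$ satisfies $\mu(e)=J(e)\mu(f)J(e)$ for $e\le f$, so the increasing net $\big(\mu(e_\alpha)\big)$ along any $e_\alpha\uparrow 1$ converges in $\widehat{\mathcal M}_+$ to a positive self-adjoint operator $b^2$ affiliated with $\mathcal M$, with $|Te|^2=b^2J(e)$ for every $e$; since each $\mu(e_\alpha)$ commutes with $J(e)$, so does $b^2$, giving item (2) and $s(b)=J(1)$. Setting $b=(b^2)^{1/2}$ one obtains $|Te|=bJ(e)$. For the partial isometry, the polar parts $u_e$ of $Te$ are compatible — for orthogonal $e,f$ one has $u_{e+f}=u_e+u_f$ because the summands have orthogonal left and right supports — so along $e_\alpha\uparrow 1$ the $u_{e_\alpha}$ converge strongly to a partial isometry $w$ with $w^*w=J(1)$ and $u_e=wJ(e)$. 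Combining, $Te=u_e|Te|=wJ(e)\,bJ(e)=wbJ(e)$; this is item (3) on projections, and it extends by linearity to simple elements and then, by density of $\mathcal S(\mathcal M)$ and $L_p$-continuity of both sides, to all of $\mathcal S(\mathcal M)$.

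\emph{Remaining items and the main difficulty.} Item (4) follows from $\tau\big(b^pJ(e)\big)=\tau\big(|Te|^p\big)=\|Te\|_p^p\le C^{p}\tau(e)$ for projections, extended to all $x\in\mathcal M_+$ by normality of $J$ and spectral approximation; when $T$ is isometric the inequality becomes the equality $\tau\big(b^pJ(e)\big)=\tau(e)$, giving the normalized statement with $C=1$ as in Theorem \ref{ISOthm}. Uniqueness is forced because each datum is intrinsic to $T$: $J(e)$ is the right support of $Te$, $b$ is recovered from $|Te|=bJ(e)$, and $w$ from the polar decomposition of $Te$. Finally, if $T$ is positive then each $Te\ge 0$, so its polar part is $u_e=s(Te)=J(e)$, whence $w=J(1)=s(b)$. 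I expect the main obstacle to be the \emph{simultaneous} and consistent construction of the possibly unbounded affiliated operator $b$ alongside the verification that the projection map $J$ genuinely extends to a \emph{normal} Jordan $*$-homomorphism — in particular the normality and the passage to the semifinite (non-$\tau$-finite unit) setting — together with the separate treatment of $p=2$, where the sharp norm equality of \cite{yeadon81isom} is unavailable and one must argue through the Hilbert space structure as in Proposition \ref{NOTS}.
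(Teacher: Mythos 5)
Your construction follows the same route as the paper's proof (right supports of $Te$ define $J$ on projections, polar decompositions give $w_e$ and $b_e$, orthoadditivity, then assembly of $w$, $b$ and extension of $J$), but there is a genuine gap at the single most delicate step: you never prove that $J$ is \emph{additive} on non-commuting self-adjoint elements. Your $J$ is defined on each self-adjoint $x$ through the spectral measure of $x$, and your multiplicativity argument lives entirely inside the abelian algebra $W^*(x)$; the phrase ``and complex-linearly to $\mathcal M$'' presupposes exactly what has to be shown, namely $J(x+y)=J(x)+J(y)$ when $x$ and $y$ do not commute (e.g. $x=\operatorname{Re}\,z$, $y=\operatorname{Im}\,z$ for a general element $z$). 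This is not a formality: a map on projections that is orthoadditive and multiplicative on abelian subalgebras need not extend to a linear map --- Gleason-type extension theorems fail for $M_2(\mathbb C)$, which is a perfectly legitimate $\mathcal M$ here --- so no general principle can be invoked. The paper closes this gap by using the linearity of $T$ itself: from $T(x)=T(e)J(x)$ for self-adjoint $x$ with $s(x)\le e$ one gets $T(e)\bigl(J(x+y)-J(x)-J(y)\bigr)=T(x+y)-T(x)-T(y)=0$, and since $J(x+y)-J(x)-J(y)$ has range projection dominated by $J(e)=s(b_e)$, left multiplication by $T(e)=w_eb_e$ annihilates no such nonzero operator; hence $J$ is additive, and only then can one extend complex-linearly and prove item (3) for arbitrary elements of $\mathcal S(\mathcal M)$. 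Your proposal contains the ingredients for this repair (you do obtain item (3) for each self-adjoint element via its own spectral approximation), but as written the argument is circular: item (3) for general elements is quoted ``by linearity'' of a map whose linearity was never established.

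Two further points are glossed over. First, normality: you claim it follows because ``$L_p$-continuity of $T$ forces $r(Te_\alpha)\uparrow r(Te)$'', but support projections are not continuous under $L_p$-convergence; the paper instead uses the trace estimate $\tau(b^pJ(x-x_\alpha))\le\tau(x-x_\alpha)$, the normality of the functional $x\mapsto\tau(b^px)$ (which requires first showing $b^p\in L_1$ in the finite case), the identity $J(1)=s(b)$, and then a separate two-stage argument to pass from finite $\tau$ to the semifinite case. Second, defining $b^2$ as the supremum of the increasing net $|Te_\alpha|^2$ in $\widehat{\mathcal M}_+$ does not by itself produce an operator \emph{affiliated} with $\mathcal M$, since elements of $\widehat{\mathcal M}_+$ may carry an infinite part; the paper avoids this by assembling $b$ from the compatible spectral resolutions $1-P_e(\lambda)=(1-P(\lambda))J(e)$ of the measurable operators $b_e$. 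Neither of these is fatal, but both need the arguments the paper supplies rather than one-line justifications. Incidentally, your worry about a separate treatment of $p=2$ is unnecessary for this theorem: the Lamperti hypothesis hands you $(Te)^*Tf=Te(Tf)^*=0$ directly on projections, so Yeadon's norm equality plays no role here (it is only needed in Proposition \ref{NOTS}, to upgrade separation of projections to separation of positive elements).
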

\begin{rem}\label{kotokikorarchilobaki}
	Note that any operator $T$ defined on $\mathcal {S}(\mathcal M)$ satisfying (i)-(iv) in Theorem \ref{charac} can be extended to a Lamperti operator with $\|T\|_{L_p(\mathcal M)\to L_p(\mathcal M)}\leq (2C)^{\frac{1}{p}}$ (or $\leq C^{\frac{1}{p}}$ if $J$ is additionally a normal $*$-homomorphism) Indeed, recall that by Lemma \ref{decom}, $J:\mathcal{M}\to\mathcal{M}$ can be written as a direct sum $J=J_1 +J_2,$ where $J_1$ is a $*$-homomorphism, $J_2$ is a $*$-anti-homomorphism and the images of $J_1$ and $J_2$ commute.  Without loss of generality, assume $C=1$ and note that for $x\in \mathcal{S}(\mathcal M) ,$ we have  \begin{equation}\label{eq:txp}
	|T(x)|^p=b^p|J(x)|^p = b^p(J_1(|x|^p)+J_2(|x^*|^p)). 
	\end{equation} 
	Note also that 
	\[\tau(b^pJ_1(|x|^p)) = \tau(b^pJ (|x|^p) e ) \leq \tau(b^pJ (|x|^p)  ) \] and similar inequality holds for $J_2$. Therefore, by (\emph{iv}) we have \[\tau(|T(x)|^p)=\tau(b^pJ_1(|x|^p)+\tau(b^pJ_2(|x^*|^p))\leq 2\|x\|_p^p.\] 
	Thus $T$ can be extended to a bounded operator on $L_p(\mathcal M)$.
	On the other hand, take two $\tau$-finite projections $e,f$  with $ef=0.$ Then we have \[(Te)^*Tf=J(e)bw^*wbJ(f)=J(e)bJ(1)bJ(f)=b^2J(ef)=0.\]In above, we have used the fact that for a Jordan $*$-homomorphism, $J(xy)=J(x)J(y)$ whenever $x$ and $y$ commutes. So $T$ is also Lamperti.
\end{rem}	
Now we give the proof of Theorem \ref{charac}. Our strategy is adapted from \cite{yeadon81isom}. However, a few key steps such as the verification of normality of $J$ turn out to be different in our new setting, so we would like to include a complete proof for this result. 
\begin{proof}
	Without loss of generality, assume that $T$ is a Lamperti contraction. We first construct the related objects for self-adjoint elements in $\mathcal{S}(\mathcal{M})$. 
	To begin with, for any projection $e\in\mathcal{S}(\mathcal{M}),$ 
	we choose a partial isometry $w_e \in \mathcal M$, a positive operator $b_e \in L_0(\mathcal M)$ and a projection $J(e)\in \mathcal M$ by using the polar decomposition: 
	\[Te=w_eb_e, \quad b_e =|Te|,\quad J(e)=w_e ^* w_e=s(b_e).\]
	Note that for two finite projections $e,f\in \mathcal{M} $ with $ef=0$, we have $(Te)^*Tf=Te(Tf)^*=0 $ by the Lamperti property of $T$, whence $b_e w_e^*w_f b_f=w_e b_e b_f  w_f^*=0$. Multiplying $w_e^*$ and $w_f$, we get $b_e b_f =0$. Then it is routine to check $(Te+Tf)^* (Te +Tf) = (|Te|+|Tf|)^2$. In other words  we get  \begin{equation}\label{eq:add b}
	b_{e+f}=b_e+b_f.
	\end{equation} 
	Recall that $b_e b_f = b_f b_e =0$. By considering the commutative von Neumann subalgebra generated by the spectral projections, we see that the supports of $b_e$ and $b_f$ are also disjoint and additive, that is,
	\begin{equation}\label{eq:add j}
	J(e+f)=J(e)+J(f), \quad J(e)J(f)=J(f)J(e)=0.
	\end{equation} 
	Moreover if we denote by $x^{-1}\in L_0 (\mathcal M)$ the element given by the functional calculus associated with $t\mapsto t^{-1} \chi_{\{t>0\}}$, then $b_{e+f}^{-1} = b_{e}^{-1} +b_{f}^{-1}$. So we may write 
	\[T(e+f) b_{e+f}^{-1}= w_e b_e b_{e+f}^{-1} + w_f b_f b_{e+f}^{-1} = w_e s(b_e) + w_f s(b_f)
	=w_e +w_f,\] which means that
	\begin{equation}\label{eq:add w}
	w_{e+f} = w_e +w_f.
	\end{equation}
	More generally, if a self-adjoint element $x\in \mathcal S (\mathcal M )$ is of the form    
	\begin{equation}\label{eq:simple}
	x=\sum_{i=1}^n\lambda_ie_i,\quad \lambda_i\in\mathbb{R},
	\end{equation} 
	where   $e_i$'s are some $\tau$-finite projections in $\mathcal{M}$ with $e_i e_j =0$ for $i\neq j$, then we define \[J(x)=\sum_{i=1}^n\lambda_i J(e_i).\]
	From \eqref{eq:add j} we see that for any two commuting self-adjoint operators $x, y $ of the above form, we have
	\begin{itemize}\label{propj}
		\item[(a)] $J(x^2)=J(x)^2;$
		\item[(b)] $\|J(x)\|_{\infty}\leq\|x\|_{\infty};$
		\item[(c)] $J(\lambda x+y)=\lambda J(x)+J(y),$ $\lambda\in\mathbb R.$
	\end{itemize}
	Moreover, for a self-adjoint element $x=x^*\in \mathcal{S}(\mathcal{M}),$ we take a sequence of step functions $f_n $ with $f_n(0)=0$ converging uniformly to the identity function $1(\lambda)=\lambda$ on the spectrum of $x,$ then the  element $f_n (x)$ is of the form \eqref{eq:simple} and we define \[J(x)=\lim_n J(f_n(x))\] in $\|\cdot\|_\infty$ norm in $\mathcal{M}.$ This limit exists and is independent of the choice of the sequence because of the above property (b) of the map $J.$ Note that now the assertions (a), (b) and (c) also hold for all self-adjoint elements in $\mathcal{S}(\mathcal{M})$. 
	
	We will check that $J$ is real linear and hence we may extend $J$ as a complex linear map to the whole space $\mathcal{S}(\mathcal M)$. Let $f\leq e $ be two projections in $\mathcal{S}(\mathcal{M}).$ Note that $T(f)J(f)=T(f)$ and $T(e-f)J(f)=0.$ Therefore  $T(f)=T(e)J(f).$ Thus by the linearity of $T$ and the assertion (c), we have $T(x)=T(e)J(x)$  for all self-adjoint elements $x\in\mathcal{S}(\mathcal M)$ of the form \eqref{eq:simple} with $s(x)\leq e.$ 
	Using the approximation by step functions $f_n $ as before, we obtain \[\|T(e)(J(x)-J(f_n(x)))\|_p\leq \|T(e)\|_p\|J(x)-J(f_n(x))\|_\infty \] 
	and hence  \begin{equation}\label{eq:tj}
	w_e b_e J(x)=T(e)J(x)=\lim\limits_{n\to\infty}T(e)J(f_n(x))=\lim\limits_{n\to\infty}T(f_n(x))=T(x),
	\end{equation} 
	where the limit is taken in $\|\cdot\|_p$ norm and we have used the fact that $x=\lim_{n\to\infty}f_n(x)$ in $\|\cdot\|_p$ norm for $x\in \mathcal{S}(\mathcal{M})$. Thus for any two self-adjoint operators $x,y\in\mathcal{S}(\mathcal{M}) $ with $e=s(x)\vee s(y),$ we have \[T(e)(J(x+y)-J(x)-J(y))=T(x+y)-T(x)-T(y)=0.\] 
	Note that $J(x+y)-J(x)-J(y)$ has the range projection contained in the support projection $J(e)$ of $T(e) $, which yields  \[J(x+y)=J(x)+J(y) ,\]
	as desired.		
	By the real linearity, we may extend $J$ as a continuous complex linear map (in $\|\cdot\|_\infty$ norm) on $\mathcal{S}(\mathcal M)$ as \[J(x+iy)=J(x)+iJ(y), \quad x,y\in \mathcal{S}(\mathcal M) \text{ self-adjoint} .\]
	Note that in this setting we also have  
	\begin{equation}\label{eq:jordan}
	J(x^*)=J(x)^*,\quad   J(x^2)=J(x)^2,\quad x\in\mathcal{S}(\mathcal{M}). 
	\end{equation}
	
	Now we check the commutativity of $b_e$ and $J(x)$ for $x\in\mathcal{S}(\mathcal{M})$ with $s(x)\leq e$. For $\tau$-finite projections $e,f\in\mathcal{M} $ with $f\leq e,$ by definition we see that $b_{e-f}J(f)=0$ and $b_fJ(f)=b_f.$ Together with \eqref{eq:add b} we get $b_eJ(f)=b_f=J(f)b_e.$ As a consequence $b_e$ commutes
	with $J(x)$ for all $x$ of the form \eqref{eq:simple}. By an approximation argument as before, we may find a sequence of elements $(x_n)$ of the form  \eqref{eq:simple} so that
	\begin{equation}\label{eq:bj}
	b_e J(x) = \lim_{n\to\infty} b_e J(x_n) = \lim_{n\to\infty} J(x _n )b_e = J(x)b_e ,  
	\end{equation}
	where the limit has been taken in $\|\cdot\|_p$ norm. Therefore, we obtain the desired commutativity.
	
	Moreover, we see that \begin{equation}\label{nor1} \tau (b_e^pJ(x)) \leq \tau (x ),\end{equation} whenever $s(x)\leq e$, $x\in\mathcal M_{+}$ and the equality holds if $T$ is an isometry.  Indeed, by  \eqref{eq:tj} and the commutativity between $b_e$ and $J(x)$, we see that 
	$\tau( |T(x)|^p) = \tau (b_e^p J(x)^p) = \tau (b_e^p J(x ^p) )$. However
	$\tau( |T(x)|^p)\leq \tau(x^p)$  since $T$ is a contraction. Thus we obtain $\tau (b_e^p J(x ^p) )  \leq \tau (x ^p )$. Note that $x$ is arbitrarily chosen, so the inequality \eqref{nor1} is proved.
	
	The rest of the proof splits into the following two steps:
	
	(1) Case where $\tau$ is finite: In this case we have $\mathcal{S}(\mathcal{M}) = \mathcal M $ and we take $w = w_1$ and $b = b_1 $. Together with the construction and the properties \eqref{eq:tj}-\eqref{nor1}, the proof is complete except the normality of $J,$ which we prove now. Take a bounded increasing net of positive operators $(x_\alpha)$ strongly converging to $x $, and let $a$ be the supreme of  $(J(x_\alpha))$. By \eqref{nor1}, we have  $\tau(b ^pJ(x-x_\alpha))\leq \tau(x-x_\alpha)\to 0.$  Therefore,  we obtain  \begin{equation}\label{eq:taubjx}
	\lim_\alpha\tau(b ^pJ(x_\alpha))=\tau(b ^pJ(x)).  
	\end{equation}
	Also, note that $b^p \in L_1 (\mathcal M)_+ $ since by \eqref{nor1} we have 
	\begin{equation}\label{eq:bplone}
	\tau (b^p) =\tau(b^ps(b))  =\tau(b^pJ(1)) \leq \tau(1)<\infty. 
	\end{equation} 
	Thus $x\mapsto \tau (b^p x)$ is a normal functional. Therefore by the definition of $a$, we also have \[\lim_\alpha \tau(b ^pJ(x_\alpha))= \tau(b ^pa).\] Together with \eqref{eq:taubjx} this implies that $\tau(b ^pa)=\tau(b ^pJ(x)).$ Note that $J$ is positive according to \eqref{eq:jordan}, so $J(x_\alpha)\leq J(x)$ and consequently $a\leq J(x)$. In other words, we obtain \[b ^{\frac{p}{2}}(J(x)-a)b ^{\frac{p}{2}}\geq 0  \quad \text{ but }\quad  \tau(b ^{\frac{p}{2}}(J(x)-a)b ^{\frac{p}{2}})=0,\]
	which yields $b ^{\frac{p}{2}}(J(x)-a)b ^{\frac{p}{2}} =0$ by the faithfulness of $\tau$. Recall that $J(1)=s(b)$, so we have $J(1)(J(x)-a)J(1)=0$, that is, $ J(x)  =J(1)aJ(1) $. However, we observe that \[J(1)aJ(1)=\lim_{\alpha}J(1)J(x_\alpha)J(1)=\lim_{\alpha}J(x_\alpha)=a.\] Thus, we obtain $a=J(x)$ which implies that $J$ is normal.
	
	(2) Case where  $\tau $ is not finite:   
	Denote by $\mathcal F$ the net of all $\tau$-finite projections in $\mathcal{M}$ equipped with the usual upward partial order. Then this net converges to $1$ in the strong operator topology. For any $x\in\mathcal{M}$, if $e,f\in\mathcal F$ with $e \leq f$, then 
	\[J(e  x e)=J(e )J(f xf )J(e)\] since we have already proved in Case (1) that the restriction of $J$ on the reduced von Neumann subalgebra $f\mathcal M f$ is a Jordan $*$-homomorphism. 
	Note that by the construction of $J$,   $(J(e))_{e\in \mathcal F}$ is also an increasing net of projections, so it converges to $J(1)\coloneqq\sup_e J(e )$ in the strong operator topology.
	Thus the above relation shows that the net $(J(e  xe ))_{e\in\mathcal F}$ converges in the strong operator topology. We denote this limit by
	\begin{equation}\label{defnref}
	J(x) =\lim_{e\in \mathcal F} J(e  xe ).\end{equation}
	Note that this also yields
	\begin{equation}\label{eq:exe}
	J(e  xe )=J(e )J(x)J(e ),\quad e\in\mathcal F ,x\in \mathcal M. 
	\end{equation} 				
	We obtain a linear map $J:\mathcal M \to \mathcal M$. We show that it is a normal Jordan $*$-homomorphism. It is normal since for any bounded monotone net $(x_i)_{i\in I}\subseteq \mathcal M_+$  and for any  $e\in\mathcal F$,
	\[J(e ) (\sup_i J(x_i)) J(e ) = \sup_i J(e  x_i e ) = J(e (\sup_i x_i) e) =J(e) J(\sup_i x_i) J(e),\]
	where we have used \eqref{eq:exe} and the fact that $J$ is normal on the finite von Neumann subalgebra $e \mathcal M e$ proved in Case (1).
	Hence $\sup\limits_i J(x_i) =J(\sup\limits_i x_i)$. Similarly  $J(x)^* = J(x^*)$ for all $x\in\mathcal M$. On the other hand, we note that for a self-adjoint element $x\in \mathcal M$, the net $(xe x)_{e\in\mathcal F}$ is increasing and bounded. Hence by the normality of $J$ and the relations \eqref{eq:exe} and \eqref{eq:jordan},  we obtain that for any $f\in \mathcal F$,  
	\begin{align*}
	J(f)J(x^2)J(f)& =\sup_{e\in\mathcal F} J(f ) J(xe x) J(f)
	=\lim_{e\in\mathcal F}   J(f  e)  J(xe x)  J(e f ) \\
	&=\lim_{e\in\mathcal F} J(f )J(e xe x e)J(f )
	=\lim_{e\in\mathcal F} J(f )J(e xe )^2 J(f )
	\\&=J(f )J(x)^2 J(f ),
	\end{align*}
	where the limit is taken with respect to the strong operator topology. Hence $J(x^2) = J(x)^2 $.
	
	Also, note that by \eqref{eq:add w} and the definition of $w_e$ and $J$, we have $w_{e}=w_{f}J(e)$ for $e\leq f$ in $\mathcal F$, so we may define similarly
	\[w=\lim_{e\in\mathcal F} w(e)\]
	where the limit is taken with respect to the strong operator topology. Thus we also have $w_{e} =wJ(e)$ and $w^* w = J(1)$. 
	
	For the definition of $b$, we consider the spectral resolution  $b_{e}=\int_0^\infty\lambda dP_{e}(\lambda) $. Clearly, $J(e)=1-P_{e}(0).$ As mentioned earlier, $b_f=b_eJ(f)$ for two $\tau$-finite projections $f\leq e.$ Therefore,  for $\lambda\geq 0$  and $\tau$-finite projections $f \leq e,$ we have $1-P_{f}(\lambda)=(1-P_{e}(\lambda))J(f).$ 
	As before, we can define $P(\lambda)$ to be the limit of $P_{e}(\lambda)$ in the strong operator topology. We set 
	\[b= \int_0^\infty\lambda dP(\lambda),\]
	which is obviously a positive self-adjoint operator affiliated with $\mathcal M$.
	Therefore, we can deduce that $1-P_{e}(\lambda)=(1-P (\lambda))J(e)$ and $b_e = b J(e) $ as well.
	
	As a result we have constructed  a partial isometry $w$, a positive self-adjoint operator $b$ and a normal Jordan $*$-homomorphism $J$. Let us check that they satisfy the properties {(i)-(iv)} stated in the theorem. The assertion {(i)} follows simply from an approximation argument and the fact 
	\[s(b)=1-P(0)=1 - \lim_{e\in\mathcal F} P_{e} (0) = \lim_{e\in\mathcal F} s(b_{e}) = \lim_{e\in\mathcal F} J(e) = J(1).\]
	The assertion {(ii)} follows again by an approximation argument and from the fact that $P(\lambda)$ commutes with $J(e)$ for all $\lambda$ and $e\in\mathcal F$. To see the assertion {(iii)}, it suffices to recall $w_e=wJ(e)$,  $b_e = b J(e) $ and the relation \eqref{eq:tj} for $e=s(x)$.   
	For the assertion {(iv)}, note that the weight $x\mapsto\tau(b^pJ(x))$ is well-defined on $\mathcal M _+$ and is normal since $\tau$ extends to $\widehat{\mathcal M}_+$ with the property $\tau(\sup_i x_i ) = \sup_i \tau(x_i)$ for all increasing net $(x_i)$ in $\widehat{\mathcal M}_+$ (see e.g. \cite[Chap.IX, Corollary 4.9]{takesaki2003oa2}). Now let us take an increasing sequence of spectral projections of $x,$  $(e_n)_{n\in\mathbb N}\subseteq \mathcal F$ so that $e_n$ converges to $s(x)$ strongly. Then  we have for all $n$, 
	\begin{equation}\label{eq:taubjgeneral}
	\tau(b^pJ(x)J(e_n))=\tau(b^pJ(e_n)J(x)J(e_n))=\tau(b_{e_n}^pJ(e_n xe_n))\leq \tau(e_n xe_n).
	\end{equation}  
	Letting $n$ tend to infinity,  
	we have  $\tau(b^pJ(x))\leq \tau(x) $, where the equality holds if additionally $T$ is isometric. So {(iv)} is proved.
	
	If in addition  $T$ is positive, then for any projection $e\in \mathcal S  (\mathcal M)$, by definition we have $b_e = |Te| =Te $ and $w_e$ is the orthogonal projection onto $\overline{\operatorname{ran}\,(Te)}$. Hence $w = \lim_e w_e$ is also an orthogonal projection and therefore $w=w^*w=J(1)=s(b)$. 
	
	The uniqueness of $w,b$ and $J$ is proved in the same way as in  \cite{yeadon81isom}. We omit the details. This completes the proof of the theorem.
\end{proof}
\begin{rem} It was kindly pointed out to us by the anonymous referee that instead of \eqref{defnref}, one can use Theorem 8.1.1 of \cite{extension of J Ham} to extend $J$ to the whole von Neumann algebra $\mathcal{M}$, whenever $\mathcal M$ does not contain any type $I_2$ direct summand. Also the normality of $J$ follows from the same theorem.
\end{rem}
\begin{rem}
	We may also observe that a similar characterization of Lamperti operators   $T:L_p(\mathcal M,\tau_{\mathcal M})\to L_p(\mathcal N,\tau_{\mathcal N})$ between different $L_p$-spaces ($1\leq p<\infty$) can be obtained easily from the above proof. 
\end{rem}

The following theorem is an adaption of the argument presented in \cite{jungeruansherman05isom} in the case of complete isometries. A Lamperti operator  $T:L_p(\mathcal{M} )\to L_p(\mathcal{M} )$ is said to be \emph{$2$-Lamperti} or \emph{$2$-support separating} if the linear map $I_{S_p^2}\otimes T:L_p(M_2\overline{\otimes}\mathcal{M} )\to L_p(M_2\overline{\otimes}\mathcal{M} )$ also extends to a Lamperti operator; it is said to be
\emph{completely Lamperti} (or  \emph{completely support separating})    if for all $n\in\mathbb{N}$, the linear map $I_{S_p^n}\otimes T:L_p(M_n\overline{\otimes}\mathcal{M},Tr_n\otimes\tau)\to L_p(M_n\overline{\otimes}\mathcal{M},Tr_n\otimes\tau)$ extends to a Lamperti operator. 	

\begin{thm}\label{scharac}Let $1\leq p<\infty.$
	Let $T:L_p(\mathcal{M},\tau)\to L_p(\mathcal{M},\tau)$ be a Lamperti operator. Then the following assertions are equivalent:
	\begin{enumerate}
		\item $T$ is completely Lamperti;
		\item $T$ is $2$-Lamperti;
		\item The map $J$ in Theorem \ref{charac} is actually a $*$-homomorphism.
	\end{enumerate}
	In this case we have $ \|T\|_{cb,\,  L_p(\mathcal M)\to  L_p(\mathcal M)}= \|T\|_{ L_p(\mathcal{M})\to  L_p(\mathcal{M})} $.
\end{thm}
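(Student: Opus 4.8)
The plan is to prove the cycle $(1)\Rightarrow(2)\Rightarrow(3)\Rightarrow(1)$ together with the norm identity, keeping the canonical decomposition $T=wbJ$ from Theorem \ref{charac} in hand throughout. The implication $(1)\Rightarrow(2)$ is immediate, taking $n=2$ in the definition of complete Lampertiness.

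For $(3)\Rightarrow(1)$ and the cb-norm identity I would first note that $I_{S_p^n}\otimes T=(1\otimes w)(1\otimes b)(\operatorname{id}_{M_n}\otimes J)$, where $\operatorname{id}_{M_n}\otimes J$ denotes the entrywise application of $J$. When $J$ is a genuine $*$-homomorphism, $\operatorname{id}_{M_n}\otimes J$ is again a normal $*$-homomorphism on $M_n\overline{\otimes}\mathcal M$, and the triple $(1\otimes w,\,1\otimes b,\,\operatorname{id}_{M_n}\otimes J)$ satisfies conditions (i)--(iv) of Theorem \ref{charac}: indeed $(1\otimes w)^*(1\otimes w)=1\otimes J(1)=(\operatorname{id}\otimes J)(1)$, the spectral projections of $1\otimes b$ commute with the range of $\operatorname{id}\otimes J$ because those of $b$ commute with $J(\mathcal M)$, and the trace condition lifts entrywise. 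By the converse direction recorded in Remark \ref{kotokikorarchilobaki}, this shows $I_{S_p^n}\otimes T$ is Lamperti for all $n$, i.e. $T$ is completely Lamperti. For the norm, multiplicativity of $\operatorname{id}\otimes J$ together with the commutation of $1\otimes b$ gives $|(I_{S_p^n}\otimes T)Z|^p=(1\otimes b^p)(\operatorname{id}\otimes J)(|Z|^p)$; taking the trace and bounding the diagonal by condition (iv) yields $\|I_{S_p^n}\otimes T\|\le\|T\|$ for every $n$, while the reverse inequality is trivial, whence $\|T\|_{cb}=\|T\|$.

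The heart of the matter is $(2)\Rightarrow(3)$, which I would prove contrapositively. Using Lemma \ref{decom}, write $J=J_1\oplus J_2$ with $J_1$ a $*$-homomorphism and $J_2$ a $*$-anti-homomorphism with commuting ranges; a short computation gives $J(xy)-J(x)J(y)=J_2(y)J_2(x)-J_2(x)J_2(y)$, so $J$ fails to be multiplicative precisely when the von Neumann algebra $\mathcal R_2:=J_2(\mathcal M)$ is noncommutative. Assuming this, I would pass to a central projection $z\in\mathcal M$ for which $J_2|_{z\mathcal M}$ is a $*$-anti-isomorphism onto $\mathcal R_2$; then $z\mathcal M$ is noncommutative and, by semifiniteness, contains a $\tau$-finite partial isometry $v$ with $v^*v\perp vv^*$ nonzero, in particular $v$ non-normal. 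The candidate counterexample consists of the two $\tau$-finite projections
\[ E=\tfrac12\begin{pmatrix} v^*v & v^* \\ v & vv^*\end{pmatrix},\qquad F=\tfrac12\begin{pmatrix} v^*v & -v^* \\ -v & vv^*\end{pmatrix} \]
in $M_2\overline{\otimes}\mathcal M$; a direct check using only $vv^*v=v$ shows $E^2=E$, $F^2=F$ and $EF=0$.

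It then remains to compute the images. Exactly as in the second paragraph one gets $((I_{S_p^2}\otimes T)E)^*(I_{S_p^2}\otimes T)F=(1\otimes b^2)(\operatorname{id}\otimes J)(E)(\operatorname{id}\otimes J)(F)$, and evaluating the $(1,1)$-entry of $(\operatorname{id}\otimes J)(E)(\operatorname{id}\otimes J)(F)$ gives $\tfrac14\big(J(v^*v)-J(v^*)J(v)\big)=\tfrac14\big(J_2(v^*v)-J_2(vv^*)\big)$, which is the self-commutator of $J_2(v)$; this is nonzero because $J_2|_{z\mathcal M}$ is a $*$-anti-isomorphism and $v$ is non-normal. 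Since $J(v^*v),J(vv^*)\le J(1)=s(b)$, multiplication by $b^2$ does not annihilate this entry, so $((I_{S_p^2}\otimes T)E)^*(I_{S_p^2}\otimes T)F\neq 0$ and $I_{S_p^2}\otimes T$ is not Lamperti; hence $T$ is not $2$-Lamperti. I expect the main obstacle to be the production of the witnessing partial isometry $v$---navigating the central decomposition of Lemma \ref{decom} and the semifinite localization correctly---whereas the subsequent $2\times 2$ computation is routine.
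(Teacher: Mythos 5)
Your proposal is correct; the routine parts coincide with the paper's proof, but your argument for (2)$\Rightarrow$(3) is genuinely different. For (1)$\Rightarrow$(2), (3)$\Rightarrow$(1) and the cb-norm identity you do exactly what the paper does: tensor $T=wbJ$ with $M_n$, verify conditions (i)--(iv) of Theorem \ref{charac} for the triple $(1\otimes w,\,1\otimes b,\,\operatorname{id}_{M_n}\otimes J)$, and invoke Remark \ref{kotokikorarchilobaki}. For (2)$\Rightarrow$(3) the paper argues \emph{forward}: assuming $\mathbf{T}_2=I_{S_p^2}\otimes T$ is Lamperti, it applies Theorem \ref{charac} to $\mathbf{T}_2$, identifies the resulting $\widetilde J$ on diagonal matrices via uniqueness of polar decompositions, determines $\widetilde J$ on off-diagonal matrices from the $M_2$-bimodule property of $\mathbf{T}_2$, and squares an off-diagonal matrix to obtain $J(xy)=J(x)J(y)$. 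You argue by contraposition instead: Lemma \ref{decom} reduces non-multiplicativity of $J$ to noncommutativity of the set $J_2(\mathcal M)$, you produce (after cutting by the central projection $z$ whose complement carries $\ker J_2$, and using semifiniteness plus comparison theory) a $\tau$-finite partial isometry $v$ with $v^*v\perp vv^*$ both nonzero, and your projections $E,F$ satisfy $EF=0$ while the $(1,1)$-entry of $(\operatorname{id}\otimes J)(E)(\operatorname{id}\otimes J)(F)$ equals $\tfrac14\big(J(v^*v)-J(v^*)J(v)\big)=\tfrac14\,J_2(v^*v-vv^*)\neq 0$; since this element is supported under $J(1)=s(b)$ on both sides, multiplication by $1\otimes b^2$ cannot annihilate it, so $\mathbf{T}_2$ is not Lamperti. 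All the computations check out: the identity $J(xy)-J(x)J(y)=J_2(y)J_2(x)-J_2(x)J_2(y)$, the projection and orthogonality relations for $E,F$, and the reduction of $((I_{S_p^2}\otimes T)E)^*(I_{S_p^2}\otimes T)F$ to $(1\otimes b^2)(\operatorname{id}\otimes J)(E)(\operatorname{id}\otimes J)(F)$ using $w^*w=J(1)=s(b)$ and the commutation of spectral projections of $b$ with $J(\mathcal M)$. Your route buys an explicit witness explaining \emph{why} an anti-homomorphic component obstructs $2$-Lampertiness (it detects non-normal elements), and it bypasses both the uniqueness clause of Theorem \ref{charac} and the bimodule argument; the paper's route, in exchange, yields the full structural identification of $\widetilde J$. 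Two small points to tighten: $J_2(\mathcal M)$ need not be a von Neumann algebra (the paper explicitly warns that $J(\mathcal M)$ is only a w$^*$-closed Jordan subspace), though only commutativity of this set is actually used; and the existence of $z$ (the kernel of a normal $*$-anti-homomorphism is a w$^*$-closed two-sided ideal, hence of the form $(1-z)\mathcal M$ with $z$ central) and of $v$ (a non-central projection admits a nonzero subprojection equivalent to a subprojection of its complement, which can then be cut down to be $\tau$-finite) are standard facts that should be spelled out, as you yourself anticipate.
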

\begin{proof} Note that {(i)$\Rightarrow$(ii)} is trivial.
	
	We now prove {(ii)$\Rightarrow$(iii)}. Let us denote $\mathbf{T}_2 = I_{S_p^2}\otimes T:L_p(M_2\overline{\otimes}\mathcal{M})\to L_p(M_2\overline{\otimes}\mathcal{M}).$ Since $\mathbf{T}_2$ separates supports, by Theorem \ref{charac} there exists a partial isometry $\widetilde{w}\in M_2\overline{\otimes}\mathcal{M},$ a positive self-adjoint operator $\widetilde{b}$ affiliated with $M_2\overline{\otimes}\mathcal{M}$ and a normal Jordan $*$-homomorphism $\widetilde{J}:M_2\overline{\otimes}\mathcal{M}\to M_2\overline{\otimes}\mathcal{M}$ such that $\widetilde{w}^*\widetilde{w}=\widetilde{J}(1_{M_2}\otimes 1)=s(\widetilde{b}),$  every spectral projection of $\widetilde{b}$ commutes with $\widetilde{J}(\widetilde{x})$ for all $\widetilde{x}\in M_2\overline{\otimes}\mathcal{M},$ and $\mathbf{T}_2(\widetilde{x})=\widetilde{w}\widetilde{b}\widetilde{J}(\widetilde{x}),$ $\widetilde{x}\in \mathcal{S}(M_2\overline{\otimes}\mathcal{M}).$ Also, $T$ separates supports. Thus, again by Theorem \ref{charac}, $Tx=wbJ(x),$ $x\in\mathcal{S}(\mathcal M)$ with $w,b$ and $J$ as in Theorem \ref{charac}. Let us consider two $\tau$-finite projections $e_1,e_2$ in $\mathcal{M}.$ Clearly, $\widetilde{e}=\left(
	\begin{array}{ccccc}
	e_1 & 0\\
	0 &  e_2\\
	\end{array}
	\right)$ is a $Tr\otimes\tau$-finite projection in $M_2\overline{\otimes} \mathcal{M}.$ Let $\mathbf{T}_2(\widetilde{e})=\widetilde{w_e}\widetilde{b_e}$ with $|\mathbf{T}_2(\widetilde{e})|=\widetilde{b_e} $ be the polar decomposition of $\mathbf{T}_2(\widetilde{e}) $ and $T(e_i)=w_{e_i}b_{e_i}$ with $|T(e_i)|=b_{e_i}$ be that of $T(e_i)$ for $  i\in\{1,2\}.$ Note that 
	\begin{equation*} 
	\mathbf{T}_2(\widetilde{e})=\left(
	\begin{array}{ccccc}
	T(e_1) & 0\\
	0 &  T(e_2)\\
	\end{array}
	\right)=\left(
	\begin{array}{ccccc}
	w_{e_1} & 0\\
	0 &  w_{e_2}\\
	\end{array}
	\right)\left(
	\begin{array}{ccccc}
	b_{e_1} & 0\\
	0 &  b_{e_2}\\
	\end{array}
	\right).
	\end{equation*}
	By the uniqueness of the polar decomposition, we have \[\widetilde{w_e}=\left(
	\begin{array}{ccccc}
	w_{e_1} & 0\\
	0 &  w_{e_2}\\
	\end{array}
	\right) \quad
	\text{and}\quad 
	\widetilde{b_e}=\left(
	\begin{array}{ccccc}
	b_{e_1} & 0\\
	0 &  b_{e_2}\\
	\end{array}
	\right).\]
	By the definition of $\widetilde{J}$ as in the proof of Theorem \ref{charac} and by uniqueness, we must have \[\widetilde{J}(\left(
	\begin{array}{ccccc}
	e_1 & 0\\
	0 &  e_2\\
	\end{array}
	\right))=\left(
	\begin{array}{ccccc}
	J(e_1) & 0\\
	0 &  J(e_2)\\
	\end{array}
	\right).\] From this we can easily conclude that $\widetilde{J}(\left(
	\begin{array}{ccccc}
	x & 0\\
	0 &  y\\
	\end{array}
	\right))=\left(
	\begin{array}{ccccc}
	J(x) & 0\\
	0 &  J(y)\\
	\end{array}
	\right)$ for all $x,y\in\mathcal{S}(\mathcal{M}).$ Note that $\mathbf{T}_2$ is an $M_2$-bimodule morphism. Therefore,  we have
	\[\mathbf{T}_2\Big(\left(
	\begin{array}{ccccc}
	0 & x\\
	y &  0\\
	\end{array}
	\right)\Big)=\mathbf T_2\Big(\left(
	\begin{array}{ccccc}
	0 & 1\\
	1 &  0\\
	\end{array}
	\right)\left(
	\begin{array}{ccccc}
	y & 0\\
	0 &  x\\
	\end{array}
	\right)\Big)=\left(
	\begin{array}{ccccc}
	0 & 1\\
	1 &  0\\
	\end{array}
	\right)\left(
	\begin{array}{ccccc}
	T(y) & 0\\
	0 &  T(x)\\
	\end{array}
	\right).\] 
	In other words,
	\[\left(
	\begin{array}{ccccc}
	w  & 0\\
	0 &  w \\
	\end{array}
	\right)\left(
	\begin{array}{ccccc}
	b  & 0\\
	0 &  b \\
	\end{array}
	\right)\widetilde{J}\Big(\left(
	\begin{array}{ccccc}
	0 & x\\
	y &  0\\
	\end{array}
	\right)\Big)=\left(
	\begin{array}{ccccc}
	w & 0\\
	0 &  w \\
	\end{array}
	\right)\left(
	\begin{array}{ccccc}
	b  & 0\\
	0 &  b \\
	\end{array}
	\right)\left(
	\begin{array}{ccccc}
	0 & J(x)\\
	J(y) &  0\\
	\end{array}
	\right).\] Together with the relation $w ^*w =s(b ) =J(1)$, we obtain
	$\widetilde{J}(\left(
	\begin{array}{ccccc}
	0 & x\\
	y &  0\\
	\end{array}
	\right))=\left(
	\begin{array}{ccccc}
	0 & J(x)\\
	J(y) &  0\\
	\end{array}
	\right).$ 
	As a result,
	\[\left(
	\begin{array}{ccccc}
	J(xy) & 0\\
	0 &  J(xy)\\
	\end{array}
	\right)=\widetilde{J}\Big(\left(
	\begin{array}{ccccc}
	xy & 0\\
	0 &  yx\\
	\end{array}
	\right)\Big)=\Big(\widetilde{J}\left(
	\begin{array}{ccccc}
	0 & x\\
	y &  0\\
	\end{array}
	\right)\Big)^2=\left(
	\begin{array}{ccccc}
	J(x)J(y) & 0\\
	0 &  J(y)J(x)\\
	\end{array}
	\right).\]
	Together with the normality of $J$, we deduce that $J$ is a $*$-homomorphism.
	
	Now we prove  {(iii)}$\Rightarrow${(i)}. Note that if $J:\mathcal M\to\mathcal M$ is a normal $*$-homomorphism, then so is $J_n=I_{M_n}\otimes J:M_n\overline{\otimes}\mathcal M\to M_n\overline{\otimes}\mathcal M$ for all $n\geq 1$, and in particular $J_n$ is a Jordan $*$-homomorphism. In this case $I_{S_p^n}\otimes T:L_p(M_n\overline{\otimes}\mathcal{M})\to L_p(M_n\overline{\otimes}\mathcal{M})$ can be written as $I_{S_p^n}\otimes T=w_nb_nJ_n,$ where $w_n=1_{M_n}\otimes w$ and $b_n=1_{M_n}\otimes b $ with $w$ and $b$ given as in the proof of Theorem \ref{charac}. If $T$ is contractive, it is easy to check that the objects $w_n$, $b_n$ and $J_n$ satisfy the conditions {(i)} to {(iv)} in Theorem \ref{charac} with $C=1$, and by Remark \ref{kotokikorarchilobaki}, $I_{S_p^n}\otimes T$ is also Lamperti and contractive. This completes the proof.
\end{proof}

Based on the previous characterizations, we also provide the following properties of completely Lamperti operators.

\begin{prop}\label{prorule}Let $1\leq p<\infty$ and $T:L_p(\mathcal M)\to L_p(\mathcal M)$ be a completely Lamperti operator. Then for all $x,y\in L_p(\mathcal M)$ with $x^*y=xy^*=0,$ we have $(Tx)^*Ty=Tx(Ty)^*=0.$
\end{prop}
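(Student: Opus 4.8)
The plan is to reduce the whole statement to the structural decomposition $T=wbJ$ furnished by Theorem \ref{charac}, exploiting the decisive fact from Theorem \ref{scharac} that for a \emph{completely} Lamperti operator the accompanying Jordan homomorphism $J$ is actually multiplicative, i.e.\ a genuine normal $*$-homomorphism. Granting this, the conclusion reduces to a short algebraic identity, and essentially all the work lies in an approximation argument that upgrades the formula $T(z)=wbJ(z)$ (a priori valid only for $z\in\mathcal S(\mathcal M)$) to arbitrary $x,y\in L_p(\mathcal M)$ while keeping the orthogonality relations $x^*y=xy^*=0$ alive.

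First I would record the algebraic core on $\mathcal S(\mathcal M)$. For $a,c\in\mathcal S(\mathcal M)$, using $w^*w=J(1)=s(b)$ and hence $bJ(1)=b$, one computes
\[
(Ta)^*Tc \;=\; J(a^*)\,b\,J(1)\,b\,J(c)\;=\;J(a^*)\,b^2 J(c),
\qquad
Ta\,(Tc)^*\;=\;w\,b\,J(a)\,J(c^*)\,b\,w^*.
\]
Since every spectral projection of $b$ commutes with the range of $J$, the factor $b^2$ commutes with $J(c)$; and since $J$ is multiplicative, $J(a^*)J(c)=J(a^*c)$ and $J(a)J(c^*)=J(ac^*)$. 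Hence $(Ta)^*Tc=b^2 J(a^*c)$ and $Ta(Tc)^*=w\,b\,J(ac^*)\,b\,w^*$, both of which vanish as soon as $a^*c=0$, respectively $ac^*=0$.

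Next I would construct the approximants. Writing the polar decompositions $x=u|x|$ and $y=v|y|$, I would apply Lemma \ref{1.10} to $|x|,|y|\in L_p(\mathcal M)_+$ to obtain $h_n,k_n\in\mathcal S(\mathcal M)_+$ with $h_n\to|x|$, $k_n\to|y|$ in $L_p$ and $s(h_n)\uparrow s(|x|)=r(x)$, $s(k_n)\uparrow s(|y|)=r(y)$, and then set $x_n=uh_n$, $y_n=vk_n$. A short verification (via the supports of $x_n,y_n$) shows $x_n,y_n\in\mathcal S(\mathcal M)$ and $x_n\to x$, $y_n\to y$ in $L_p$. The crucial point is that the two hypotheses translate into support orthogonality: $x^*y=0$ forces the left supports $l(x)=uu^*$ and $l(y)=vv^*$ to be orthogonal, whence $u^*v=u^*l(x)l(y)v=0$ and so $x_n^*y_n=h_n u^*v\,k_n=0$; dually $xy^*=0$ forces $r(x)\perp r(y)$, so $s(h_n)s(k_n)=0$, giving $h_nk_n=0$ and $x_ny_n^*=u\,h_nk_n\,v^*=0$. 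Thus the identities of the previous paragraph yield $(Tx_n)^*Ty_n=0$ and $Tx_n(Ty_n)^*=0$ for every $n$.

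Finally, since $T$ is bounded on $L_p(\mathcal M)$ we have $Tx_n\to Tx$ and $Ty_n\to Ty$ in $L_p$, and multiplication $L_p\times L_p\to L_{p/2}$ is continuous by H\"older's inequality (valid also in the quasi-Banach range $1\le p<2$); passing to the limit gives $(Tx)^*Ty=0$ and $Tx(Ty)^*=0$. The main obstacle is exactly this approximation step: one must place the approximating sequences inside $\mathcal S(\mathcal M)$ so that \emph{both} orthogonality relations survive simultaneously, which is why the polar decomposition combined with the support control of Lemma \ref{1.10} is needed rather than a crude truncation; the algebra itself is entirely routine once $J$ is known to be a genuine $*$-homomorphism.
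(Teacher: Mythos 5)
Your proof is correct, and it rests on the same two pillars as the paper's argument — the representation $T=wbJ$ from Theorem \ref{charac} and the multiplicativity of $J$ for completely Lamperti operators from Theorem \ref{scharac} — but the way you pass from $\mathcal S(\mathcal M)$ to all of $L_p(\mathcal M)$ is genuinely different. The paper first reduces to moduli: from $x^*y=xy^*=0$ it deduces $|x||y|=0$, proves the identity $|Tx|=S(|x|)$ for the positive Lamperti operator $S=bJ$ (first on $\mathcal S(\mathcal M)$, then on all of $L_p(\mathcal M)$ by invoking Kosaki's theorem that $x\mapsto|x|$ is $\|\cdot\|_p$-continuous), applies Proposition \ref{NOTS} to get $|Tx||Ty|=0$, and finally recovers $(Tx)^*Ty=Tx(Ty)^*=0$ by multiplying with the partial isometries from the polar decompositions of $Tx$ and $Ty$ (and arguing similarly with $|x^*|,|y^*|$ for the remaining relation). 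You instead prove the vanishing directly on $\mathcal S(\mathcal M)$ by essentially the computation of Remark \ref{kotokikorarchilobaki}, upgraded by multiplicativity so that $J(a^*)J(c)=J(a^*c)$, and then handle density by approximating the pair $(x,y)$ itself: polar decompositions together with Lemma \ref{1.10} give approximants $x_n=uh_n$, $y_n=vk_n$ in $\mathcal S(\mathcal M)$ for which both relations $x_n^*y_n=0$ and $x_ny_n^*=0$ survive — because $x^*y=0$ forces the range projections of $x$ and $y$ to be orthogonal, hence $u^*v=0$, while $xy^*=0$ forces $s(|x|)s(|y|)=0$, hence $h_nk_n=0$ — and you conclude by continuity of multiplication $L_p\times L_p\to L_{p/2}$ (valid as a quasi-norm estimate also for $p<2$). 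Your route avoids citing both Kosaki's continuity theorem and Proposition \ref{NOTS}, at the cost of the support bookkeeping in the approximation; the paper's route is shorter given those two ingredients and yields the useful identity $|Tx|=S(|x|)$ on all of $L_p(\mathcal M)$ as a byproduct, which your argument does not produce.
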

\begin{proof}Note that $x^*y=xy^*=0$ implies that $|x|^2|y|^2=|y|^2|x|^2=0.$ This implies $|x||y|=|y||x|=0.$ Let $w,b,J$ be as in Theorem \ref{charac}. Define $S(x)= bJ(x),$ $x\in \mathcal{S}(\mathcal M)$. Clearly, $S$ extends to a positive   completely Lamperti operator. By Theorem \ref{scharac}, $J$ is a normal $*$-homomorphism. Thus $|Tx|=S(|x|)$ for all $x\in\mathcal{S}(\mathcal M)$. Note that the map $x\mapsto |x|$ is continuous with respect to the $\|\ \|_p$-norm (see e.g. \cite[Theorem 4.4]{kosaki84convex}). Hence by an approximation argument we also have $|Tx|=S(|x|)$ for any $x\in L_p(\mathcal M) $. By Proposition \ref{NOTS}
	we have $S(|x|)S(|y|)=0.$ Therefore, $|Tx||Ty|=0.$ Now multiplying the partial isometry $w$ in the polar decomposition of $Tx$ from the left we obtain $Tx|Ty|=0.$ Taking adjoint and applying the same trick again we obtain $Ty(Tx)^*=0.$ By a similar way we obtain $(Tx)^*Ty=0.$ This completes the proof of the proposition.
\end{proof}
The following proposition shows that compositions of completely Lamperti operators are again completely Lamperti.
\begin{prop}\label{hotatuse}
	Let $1\leq p<\infty.$ Let $T_i:L_p(\mathcal M)\to L_p(\mathcal M), i=1,2$ be two completely Lamperti operators. Then  $T_1T_2:L_p(\mathcal M)\to L_p(\mathcal M)$ is also completely Lamperti.
\end{prop}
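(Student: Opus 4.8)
The plan is to exploit the characterization theorem (Theorem \ref{scharac}), which reduces the completely Lamperti property to a clean algebraic condition: a Lamperti operator is completely Lamperti if and only if the associated Jordan $*$-homomorphism $J$ from Theorem \ref{charac} is in fact a genuine $*$-homomorphism. Thus the statement will follow once I understand how the $J$-data of a composition relates to the $J$-data of the factors. First I would invoke Proposition \ref{NOTS} to note that $T_1 T_2$ is at least a well-defined bounded Lamperti operator (indeed Proposition \ref{NOTS} already records that a composition of positive Lamperti operators is Lamperti, and a mild variant covers the general case), so that Theorem \ref{charac} applies to $T_1 T_2$ and produces its own triple $(w,b,J)$. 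The real task is to identify $J$.

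The key step is to compute the canonical decomposition of the composition in terms of the decompositions $T_i = w_i b_i J_i$ supplied by Theorem \ref{charac}. Writing $T_2(x) = w_2 b_2 J_2(x)$ and then applying $T_1$, I would track how $T_1$ acts on the polar factors. The cleanest route is to pass through the positive ``modulus part'' $S_i(x) = b_i J_i(x)$ as in the proof of Proposition \ref{prorule}: for a projection $e$, the support projection of $T_1 T_2 (e)$ and its modulus can be read off from $J_1(J_2(e))$, so that the Jordan homomorphism attached to $T_1 T_2$ is, up to the uniqueness clause in Theorem \ref{charac}, the composition $J_1 \circ J_2$. Here I would use that $J_2$ is a normal $*$-homomorphism (by hypothesis and Theorem \ref{scharac}), so $J_2$ maps $\mathcal M$ into $\mathcal M$ multiplicatively, and then $J_1$ is also a normal $*$-homomorphism; hence $J_1 \circ J_2$ is a composition of two $*$-homomorphisms, which is again a normal $*$-homomorphism.

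Once I have argued that the Jordan homomorphism of $T_1 T_2$ coincides (by uniqueness) with $J_1 \circ J_2$, and that this composition is genuinely multiplicative, I would simply apply the equivalence (iii)$\Rightarrow$(i) of Theorem \ref{scharac} to conclude that $T_1 T_2$ is completely Lamperti. Alternatively, and perhaps more robustly, I could bypass the explicit identification of $J$ and argue directly at the level of amplifications: for each $n$, $I_{S_p^n}\otimes (T_1 T_2) = (I_{S_p^n}\otimes T_1)(I_{S_p^n}\otimes T_2)$, and since each factor is Lamperti by the complete Lamperti hypothesis, Proposition \ref{NOTS} (composition of Lamperti operators is Lamperti) shows the product is Lamperti for every $n$. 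This second approach is essentially immediate from the definitions and Proposition \ref{NOTS}, and I expect it to be the shortest path.

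The main obstacle I anticipate is the bookkeeping in the first approach: justifying that the Jordan homomorphism of the composite is exactly $J_1 \circ J_2$ requires care, because the polar-decomposition data interact through the partial isometries and the positive self-adjoint operators $b_i$, and one must verify that the support and commutation relations in Theorem \ref{charac}(i)--(ii) are preserved under composition before the uniqueness statement can be invoked. If this turns delicate, the amplification argument sidesteps it entirely: the only nontrivial input there is that a composition of Lamperti operators is again Lamperti, which is the content of Proposition \ref{NOTS} applied on $M_n\overline{\otimes}\mathcal M$ for every $n$. I would therefore present the proof via the amplification route, keeping the structural identification $J = J_1 \circ J_2$ as an optional remark.
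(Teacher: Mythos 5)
Your overall architecture coincides with the paper's: reduce by amplification to showing that $T_1T_2$ is Lamperti, then show that a product of Lamperti operators separates supports. However, the route you say you would actually present (the "amplification route") has a genuine gap at its key step. You justify "a composition of Lamperti operators is Lamperti" by citing Proposition \ref{NOTS}, but that proposition --- including its composition statement --- concerns only \emph{positive} Lamperti operators, whereas the $T_i$ in Proposition \ref{hotatuse} are not assumed positive. The failure point is concrete: take projections $e,f$ with $ef=0$. The Lamperti property of $T_2$ gives $(T_2e)^*T_2f=T_2e(T_2f)^*=0$, but $T_2e$ and $T_2f$ are neither projections nor positive, so neither the definition of a Lamperti operator nor Proposition \ref{NOTS} says anything about how $T_1$ acts on this pair. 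What is needed is exactly the upgrade of support separation from disjoint projections to pairs $x,y$ satisfying the two-sided condition $x^*y=xy^*=0$; that is Proposition \ref{prorule}, which the paper proves for \emph{completely} Lamperti operators using Theorem \ref{scharac} (multiplicativity of $J$), Proposition \ref{NOTS} applied to the positive operator $S(x)=bJ(x)$, and the $L_p$-continuity of $x\mapsto|x|$. Calling this "a mild variant" of Proposition \ref{NOTS} understates it: for non-positive operators the extension is genuinely structural, and it is the actual content of the composition result.

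The repair is immediate and turns your argument into the paper's proof: after amplifying (note that each $I_{S_p^n}\otimes T_i$ is again \emph{completely} Lamperti, since $I_{S_p^m}\otimes(I_{S_p^n}\otimes T_i)=I_{S_p^{mn}}\otimes T_i$, so Proposition \ref{prorule} applies to it), take $x,y$ with $x^*y=xy^*=0$, apply Proposition \ref{prorule} to $T_2$ to get $(T_2x)^*T_2y=T_2x(T_2y)^*=0$, and then apply Proposition \ref{prorule} to $T_1$ with the pair $(T_2x,T_2y)$. As for your first route (identifying the Jordan homomorphism of $T_1T_2$ with $J_1\circ J_2$ and invoking the uniqueness clause of Theorem \ref{charac} together with Theorem \ref{scharac}), the obstacles you flag are real and it is not a sound fallback as sketched: $T_2x$ lies in $L_p(\mathcal M)$ rather than $\mathcal S(\mathcal M)$, and $b_2$ is only affiliated with $\mathcal M$, so writing $T_1T_2(x)=w_1b_1J_1(w_2)J_1(b_2)J_1(J_2(x))$ requires extending $J_1$ beyond $\mathcal M$ (the paper's Lemma \ref{extension} does this only for finite $\mathcal M$), and one must still verify conditions (i)--(iv) of Theorem \ref{charac} for the composed data before uniqueness can be invoked.
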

\begin{proof}By replacing $T_i$ by $I_{S_p^n} \otimes T_i$ without loss of generality, it suffices to show that $T_1 T_2$ is Lamperti. Let $x,y\in L _p(\mathcal M)$ with $x^*y=xy^*=0.$ Then by Proposition \ref{prorule} we have \[(T_2x)^*T_2y=T_2x(T_2y)^*=0.\] Since $T_1$ is completely Lamperti we have by Proposition \ref{prorule} again \[(T_1T_2x)^*T_1T_2y=T_1T_2x(T_1T_2y)^*=0.\] Therefore, $T_1T_2$ is again Lamperti. This completes the proof.
\end{proof}
\begin{rem}\label{arokoto}
	We will keep in mind throughout the paper the following particular cases of Lamperti and completely Lamperti operators.
	\begin{enumerate}
		\item For $1\leq p\neq 2<\infty,$ any isometry (resp. complete isometry) $T:L_p(\mathcal{M})\to L_p(\mathcal{M})$ is Lamperti (resp. completely Lamperti). Moreover, if $T$ is positive isometry (resp. positive complete isometry) on $L_2(\mathcal{M}),$ then $T$ is Lamperti (resp. completely Lamperti). Indeed, for $p\neq 2,$ the claim immediately follows from Remark \ref{kotokikorarchilobaki} and Theorem \ref{ISOthm}. For $p=2$ and $T$ a positive isometry, we take two $\tau$-finite projections  $e,f$ with $ef=0.$ Note that as $T$ is an isometry, \[\|Te+Tf\|_2^2=\|e+f\|_2^2,\quad \|Te+iTf\|_2^2=\|e+if\|_2^2.\] Therefore, we obtain $\tau (TeTf)=\tau (ef)=0.$ Thus, $(Te)^{\frac{1}{2}} Tf (Te)^{\frac{1}{2}} =0 $. In other words we have, 
		\[ ( (Tf)^{\frac{1}{2}} (Te)^{\frac{1}{2}} )^* ( (Tf)^{\frac{1}{2}} (Te)^{\frac{1}{2}}) =0,\]
		Thus we obtain
		$ (Tf)^{\frac{1}{2}} (Te)^{\frac{1}{2}} =0 $ and hence
		$ TeTf=0$. 
		\item	Let $1\leq p<\infty.$ Let $(\Omega,\Sigma,\mu)$ be a $\sigma$-finite measure space. For any nonsigular automorphism $\Phi$ of $(\Omega,\Sigma,\mu)$, it is well-known that $\Phi$ extends to a map on the set of all finite-valued measurable functions such that $\Phi(\chi_E)=\chi_{\Phi(E)}$ for $E\in\Sigma$ (see \cite{kan79thesis}).   Any Lamperti operator $T:L_p(\Omega,{\Sigma,\mu})\to L_p(\Omega,{\Sigma,\mu})$ is of the form $T(f)(x)=h(x)(\Phi f)(x)$ for some measurable function $h$ and for some $\Phi$ as described above  (see \cite{kan78erglamperti}). Moreover, it follows from Remark \ref{kotokikorarchilobaki} and Theorem \ref{scharac} that $T$ is indeed completely Lamperti.
	\end{enumerate}
\end{rem}
\begin{rem}\label{isothmm1}
	By the proof of Theorem \ref{charac} and Theorem \ref{scharac}, we see that Theorem \ref{ISOthm} is also true for Lamperti isometries  for $p=2$. In particular, it also holds for positive isometries on $L_2(\mathcal M)$ according to Remark \ref{arokoto} (i). 
\end{rem}

\section{Dilation theorem for the convex hull of Lamperti contractions}\label{DILATION}
In this section, we prove an $N$-dilation theorem for the convex hull of Lamperti contractions (tautologically, contractions that separate supports) for all $N\geq 1.$ 
For notational simplicity, in this and   next sections we will denote by $\mathbb{SS}(L_p(\mathcal{M}))$ the class of all support separating contractions  on $L_p(\mathcal{M})$, and by $\mathbb{CSS}(L_p(\mathcal{M}))$ the class of all completely support separating contractions on $L_p(\mathcal{M})$. Also, let $\mathbb{SS}^+(L_p(\mathcal{M}))$ (resp. $\mathbb{CSS}^+(L_p(\mathcal{M}))$) be the subclass of positive and support separating (resp. positive completely support separating) contractions. Moreover, given a family $S$ of operators on $L_p(\mathcal{M})$, we denote by $\operatorname{conv}(S)$ the usual convex hull of $S$ consisting of all operators of the form
\[\sum_{i=1}^{n} \lambda_i T_i,\quad T_i\in S,\  \sum_{i=1}^{n}\lambda_i=1,\ \lambda_i\in \mathbb R_+,\ n\in\mathbb N.\]
And we denote by $\overline{\operatorname{conv}}^{sot}(S)$ the closure of $\operatorname{conv}(S)$ with respect to the strong operator topology.

Before the proof, we first give the following useful lemma.
\begin{lem}\label{lem:d}
	Let $1\leq p<\infty$ and $T:L_p(\mathcal  M)\to L_p(\mathcal  M)$ be a Lamperti contraction with the representation $T(x)=wbJ(x)$ for $x\in \mathcal{S}(\mathcal  M)$ given in Theorem \ref{charac}. 
	
	\emph{(i)} Let  $e,f$ be the two projections in the center of the von Neumann algebra $\mathcal N$ generated by $J(\mathcal M)$ with $e+f=1_{\mathcal N}$ given by Lemma \ref{decom}, such that    $J(\cdot)e $ is a $*$-homomorphism and $J(\cdot)f$ is a $*$-anti-homomorphism. Then the weights defined by \[\widetilde{\tau}(x)= \tau(b ^pJ (x)),\ \widetilde{\tau}_1 (x)=\tau(b ^pJ (x)e),\ \widetilde{\tau}_2 (x)=\tau(b ^pJ (x)f), \quad x\in\mathcal M_{+}\] 
	are normal and tracial.
	
	\emph{(ii)} We have a positive element $0\leq \rho \leq 1$ with $\rho \in \mathcal{Z}(\mathcal  M)$ and \begin{equation*}\|T(x)\|_p^p=\tau(\rho |x|^p)=\widetilde{\tau}( |x|^p) \end{equation*} for all $x\in \mathcal{S}(\mathcal  M)$.
\end{lem}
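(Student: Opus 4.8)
The plan is to exploit the structural representation $T(x)=wbJ(x)$ from Theorem \ref{charac}, together with the Jordan decomposition $J=J(\cdot)e+J(\cdot)f$ coming from Lemma \ref{decom}, where $J(\cdot)e$ is a $*$-homomorphism and $J(\cdot)f$ is a $*$-anti-homomorphism onto commuting corners. First I would address part (i). Each of the three functionals is manifestly positive and normal: positivity is clear since $b^p\geq 0$, $J$ is positive, and $e,f$ are central projections; normality follows because $b^p\in L_1(\mathcal{M})_+$ (as established in \eqref{eq:bplone} in the finite case, and in general from property (iv), which guarantees $x\mapsto\tau(b^pJ(x))$ is a normal weight). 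The substantive point is traciality. Writing $\widetilde\tau=\widetilde\tau_1+\widetilde\tau_2$, it suffices to check that each piece is tracial, i.e. $\widetilde\tau_1(x^*x)=\widetilde\tau_1(xx^*)$ and similarly for $\widetilde\tau_2$. For $\widetilde\tau_1$, using that $J(\cdot)e$ is a genuine $*$-homomorphism, I would compute $\widetilde\tau_1(x^*x)=\tau(b^pJ(x^*x)e)=\tau(b^pJ(x)^*J(x)e)$; since every spectral projection of $b$ commutes with $J$ (property (ii)), $b^p$ commutes with $J(x)e$, and the traciality of $\tau$ on $\mathcal{M}$ together with the commutation $b^pe=eb^p$ yields $\tau(b^pJ(x)^*J(x)e)=\tau(b^pJ(x)J(x)^*e)=\widetilde\tau_1(xx^*)$. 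The same argument for $\widetilde\tau_2$ uses that $J(\cdot)f$ is a $*$-\emph{anti}-homomorphism, so the order reverses: $J(x^*x)f=J(x)J(x^*)f=J(x)J(x)^*f$, which conveniently gives exactly what traciality needs. Thus the anti-multiplicativity on the $f$-corner is precisely what makes $\widetilde\tau_2$ tracial, and this is the step I expect to require the most care in bookkeeping.

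For part (ii), the starting point is the norm identity already extractable from the proof of Theorem \ref{charac}: for $x\in\mathcal{S}(\mathcal{M})$,
\[
\|T(x)\|_p^p=\tau(|T(x)|^p)=\tau(b^p|J(x)|^p)=\tau(b^pJ(|x|^p))=\widetilde\tau(|x|^p),
\]
where I use $|T(x)|^p=b^p|J(x)|^p$ (from the commutation of $b$ with $J$ and $w^*w=J(1)=s(b)$) and the Jordan identity $|J(x)|^p=J(|x|^p)$, valid because $J$ is a Jordan $*$-homomorphism and hence preserves the functional calculus of positive elements. This gives the second equality $\|T(x)\|_p^p=\widetilde\tau(|x|^p)$ immediately, so the only remaining task is to produce the central density $\rho$.

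To obtain $\rho$, I would argue that $\widetilde\tau$ is a normal tracial weight on $\mathcal{M}$ dominated by $\tau$ (property (iv) gives $\widetilde\tau(y)\leq\tau(y)$ for $y\in\mathcal{M}_+$, i.e. $\widetilde\tau\leq\tau$). By the noncommutative Radon--Nikodym theorem for normal traces dominated by a fixed normal faithful semifinite trace, there is a unique positive affiliated operator $\rho$ with $\widetilde\tau(y)=\tau(\rho y)$ for all $y\in\mathcal{M}_+$; the domination $\widetilde\tau\leq\tau$ forces $0\leq\rho\leq 1$, and in particular $\rho\in\mathcal{M}$. The centrality of $\rho$ is where traciality of $\widetilde\tau$ pays off: for any unitary $u\in\mathcal{M}$ and $y\in\mathcal{M}_+$, traciality gives $\tau(\rho\, u^*yu)=\widetilde\tau(u^*yu)=\widetilde\tau(y)=\tau(\rho y)=\tau(u^*\rho\, yu)=\tau((u\rho u^*)\,u^*yu)$, and since $u^*yu$ ranges over all of $\mathcal{M}_+$, faithfulness of $\tau$ yields $u\rho u^*=\rho$ for every unitary $u$, hence $\rho\in\mathcal{Z}(\mathcal{M})$. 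Substituting back, $\widetilde\tau(|x|^p)=\tau(\rho|x|^p)$, which completes the chain of equalities. The main obstacle, as noted, is handling the traciality on the anti-homomorphic corner correctly and ensuring the density $\rho$ lands in the center rather than merely being a positive contraction; both are resolved once the traciality of the $\widetilde\tau_i$ is firmly in hand.
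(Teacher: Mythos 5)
Your part (i) is correct and essentially identical to the paper's argument: normality comes from the proof of Theorem \ref{charac}, and traciality of $\widetilde{\tau}_1,\widetilde{\tau}_2$ follows from the (anti-)multiplicativity on the two central corners, the cyclicity of $\tau$, and the commutation of $b$ with the von Neumann algebra generated by $J(\mathcal M)$ (hence with $e$ and $f$). The Radon--Nikodym step at the end is also fine; the paper invokes Dixmier's theorem for traces dominated by $\tau$, which yields centrality of $\rho$ directly, whereas you rederive centrality from the traciality of $\widetilde{\tau}$ via unitary conjugation --- both routes are valid.

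Part (ii) as written, however, contains a genuine error. The identity $|J(x)|^p=J(|x|^p)$ is \emph{false} for a general Jordan $*$-homomorphism: writing $J_1=J(\cdot)e$ and $J_2=J(\cdot)f$, one has $|J(x)|^2=J(x^*)J(x)=J_1(x^*)J_1(x)+J_2(x^*)J_2(x)=J_1(|x|^2)+J_2(|x^*|^2)$, because the anti-homomorphism $J_2$ reverses products (the cross terms vanish since $ef=0$). The correct identity is therefore the paper's \eqref{eq:txp}, namely $|J(x)|^p=J_1(|x|^p)+J_2(|x^*|^p)$, which differs from $J(|x|^p)=J_1(|x|^p)+J_2(|x|^p)$ whenever $x$ is not normal: for instance, take $J$ to be the transpose map on $M_2$ (a $*$-anti-homomorphism) and $x=e_{12}$; then $|J(x)|^2=e_{11}$ while $J(|x|^2)=e_{22}$. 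Consequently your chain $\tau(b^p|J(x)|^p)=\tau(b^pJ(|x|^p))=\widetilde{\tau}(|x|^p)$ breaks at the middle equality. The conclusion is nevertheless true, and the repair is exactly where the traciality you established in part (i) must be used: one gets $\|Tx\|_p^p=\widetilde{\tau}_1(|x|^p)+\widetilde{\tau}_2(|x^*|^p)$, and traciality of $\widetilde{\tau}_2$ gives $\widetilde{\tau}_2(|x^*|^p)=\widetilde{\tau}_2(|x|^p)$, whence $\|Tx\|_p^p=\widetilde{\tau}(|x|^p)$. This is precisely how the paper argues; somewhat ironically, you singled out the traciality of $\widetilde{\tau}_2$ as the delicate point in (i), but then bypassed the one place in (ii) where it is indispensable.
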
 
\begin{proof} 
	Notice that all the weights $\widetilde{\tau}, \widetilde{\tau}_1, \widetilde{\tau}_2$ are   normal, as explained previously in the proof of Theorem \ref{charac}. For $x\in\mathcal M$, by the traciality of $\tau$ and the commutativity between $J (\mathcal M)$ and spectral projections of $b$, we have
	\[
	\widetilde{\tau}_1 (x^*x)  = \tau(b ^p    J (x^*)J (x) e )  = \tau(b ^p  J (x) J (x^*) e )  = \tau(b ^p  J (x x^*) e )  = \widetilde{\tau}_1 (xx^*).
	\]
	So $\widetilde{\tau}_1$ is also tracial. Similarly we have the traciality for $\widetilde{\tau}_2$ and hence for $\widetilde{\tau} = \widetilde{\tau}_1 + \widetilde{\tau}_2$. In particular, $ \widetilde{\tau}_2 (|x^*|^p)
	= \widetilde{\tau}_2 (|x|^p)$. Together with \eqref{eq:txp} we see that
	\[\|Tx\|_p^p= \widetilde{\tau}_1 (|x|^p) + \widetilde{\tau}_2 (|x^*|^p)
	=  \widetilde{\tau}_1 (|x|^p) + \widetilde{\tau}_2 (|x|^p) = \widetilde{\tau} (|x|^p).\]
	Also, recall that by Theorem \ref{charac}   we have  
	\[\tau(b^pJ(x))\leq \tau(x),\quad x\in\mathcal{M}_+ .\] Therefore, by the noncommutative Radon-Nikodym theorem \cite[Chap. I, \S6.4, Théorème 3]{dixmier69book}, there exists a positive element $\rho$ in the center of $\mathcal{M}$ such that $0\leq \rho\leq 1$ and $\widetilde{\tau}(x)=\tau(\rho x)$ for all $x\in\mathcal M_{+}.$ The proof is complete.
\end{proof}
Now we give the following simultaneous dilation theorem for support separating contractions.
\begin{prop}\label{fcss}Let $1\leq p<\infty.$ Then the set $\mathbb{SS}(L_p(\mathcal{M}))$ has a   simultaneous dilation, and the set $\mathbb{CSS}(L_p(\mathcal{M}))$ has a complete simultaneous dilation.
\end{prop}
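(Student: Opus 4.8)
Let me think about how to construct a simultaneous dilation for the whole class of Lamperti contractions.

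The key structural fact (Theorem \ref{charac}) is that every Lamperti contraction $T$ has the form $T(x) = wbJ(x)$ with $J$ a normal Jordan $*$-homomorphism. The classical proof (Peller, Kan) for a single Lamperti contraction builds the dilation space using a bilateral shift combined with the measure-preserving part of the structure. For a *simultaneous* dilation of the whole family, the natural idea is to use a tensor product / infinite product construction that is universal across all $T$, with a fixed dilation space $\mathcal{N}$ and fixed maps $Q,J$ (note the overloading of $J$—I'll call the dilation inclusion $\iota$), and a set $\mathcal{U}$ of isometries indexed so that each $T$ gets assigned one isometry $U_T$.

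I would proceed as follows. First, I would reduce to understanding the "modular" data produced by Theorem \ref{charac} and Lemma \ref{lem:d}: for each $T = wbJ$ we get a normal tracial weight $\widetilde\tau(x) = \tau(b^p J(x))$ dominated by $\tau$, and a central density $\rho$ with $\|Tx\|_p^p = \tau(\rho|x|^p)$. The plan is to dilate $T$ in two stages. The first stage handles the Jordan homomorphism $J$ together with the weight, realizing $T$ as a composition of a (measure-preserving) Jordan isomorphism onto its image and a conditional-expectation-type contraction coming from the density $\rho$; the second stage dilates the "compression by $\rho$" part. I would build $\mathcal{N}$ as a tensor product of $\mathcal{M}$ with an auxiliary algebra carrying a bilateral shift (for the isometry $U$) and an auxiliary space absorbing the defect between $T$ and a genuine isometry. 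The embedding $\iota: L_p(\mathcal{M}) \to L_p(\mathcal{N})$ and the norm-one projection $Q: L_p(\mathcal{N}) \to L_p(\mathcal{M})$ should be chosen once and for all, independent of $T$, while $U_T$ encodes the individual structure $w,b,J$ via a shift-type isometry on $L_p(\mathcal{N})$. The defining relation $T_1\cdots T_n = Q\, U_{T_1}\cdots U_{T_n}\, \iota$ should then follow because the shift makes successive compositions "spread out" onto disjoint tensor slots, so that the compressions telescope correctly.

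The main technical obstacle will be verifying that $U_T$ is a genuine isometry (not merely a contraction) on $L_p(\mathcal{N})$ and that it can be chosen uniformly so that arbitrary products $U_{T_1}\cdots U_{T_n}$ compose to give exactly $Q^{-1}$-lifts of $T_1\cdots T_n$. Here the Jordan (rather than multiplicative) nature of $J$ forces care: by Lemma \ref{decom} one splits $J = J(\cdot)e + J(\cdot)f$ into a homomorphism and an anti-homomorphism part, and the anti-multiplicative piece must be handled so that the isometry property survives under tensoring. This is precisely where the completely Lamperti case becomes cleaner: by Theorem \ref{scharac}, $J$ is then genuinely multiplicative, so $U_T$ is automatically a complete isometry, giving the second assertion. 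I would use Theorem \ref{ISOthm} to recognize the building-block maps as (complete) isometries via their associated Jordan homomorphisms and trace-preservation identities, and the uniformity of $Q,\iota$ together with the disjointness built into the shift to check relation \eqref{di1} by a direct computation on elements of $\mathcal{S}(\mathcal{M})$, extending by density.

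In summary, the plan is: (1) invoke Theorem \ref{charac} and Lemma \ref{lem:d} to get the structural data $(w,b,J,\rho)$ and the tracial weight $\widetilde\tau$ for each $T$; (2) construct a single dilation algebra $\mathcal{N}$ with a bilateral-shift structure and fixed maps $Q,\iota$; (3) for each $T$ define a shift-type isometry $U_T$ from its structural data, using Theorem \ref{ISOthm} to certify it is an isometry; (4) verify the simultaneous dilation identity by checking that the shift makes products telescope and the compressions recover $T_1\cdots T_n$; (5) for the complete case, note that Theorem \ref{scharac} makes $J$ multiplicative, upgrading all isometries to complete isometries and all contractions to complete contractions. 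The heart of the argument—and the step I expect to demand the most care—is the isometry verification for $U_T$ in the general (merely Jordan, non-multiplicative) Lamperti case.
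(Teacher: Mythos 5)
Your proposal has the right ingredients on the shelf (Theorem \ref{charac}, Lemma \ref{lem:d}, multiplicativity of $J$ for the complete case), but the core of the proof is missing: you never construct the defect operator, and this is precisely where the Lamperti hypothesis does all the work. The paper's proof is short because Lemma \ref{lem:d} hands you a central element $0\le\rho\le 1$ with $\|T(x)\|_p^p=\tau(\rho|x|^p)$; setting $S_T(x)=(1-\rho)^{1/p}x$ one gets the exact Pythagorean identity
\[\|T(x)\|_p^p+\|S_T(x)\|_p^p=\|x\|_p^p,\]
and then on the \emph{fixed} space $\ell_p(L_p(\mathcal{M}))=L_p(\oplus_{n\ge 0}\mathcal M)$ the map $U_T(x_0,x_1,x_2,\dots)=(Tx_0,S_Tx_0,x_1,x_2,\dots)$ is an isometry by inspection, $i(x)=(x,0,\dots)$ and $j=$ projection onto the first coordinate are fixed complete isometry/complete contraction, and $T_1\cdots T_n=jU_{T_1}\cdots U_{T_n}i$ holds because the defects land in coordinates $\ge 1$ and are only shifted further right by subsequent $U$'s, never returning to coordinate $0$. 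This is a one-sided, Sch\"affer-type construction: no bilateral shift, no tensor product, no two-stage factorization. In your write-up, the phrase ``an auxiliary space absorbing the defect between $T$ and a genuine isometry'' presupposes exactly what must be proved: on $L_p$ with $p\neq 2$ a generic contraction admits no such defect (this is the content of the Junge--Le Merdy obstruction cited in the introduction), and it is only the identity $\|Tx\|_p^p=\tau(\rho|x|^p)$ with $\rho$ \emph{central} that produces one here.

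Relatedly, you misplace the difficulty. You expect the hard step to be proving that $U_T$ is an isometry in the merely Jordan, non-multiplicative case, and you worry about the anti-homomorphism part of $J$ surviving tensoring. With the correct defect, the isometry of $U_T$ is a one-line norm computation that never sees the Jordan structure; the homomorphism/anti-homomorphism decomposition of Lemma \ref{decom} enters only through Lemma \ref{lem:d} (to show the weight $\widetilde\tau$ is tracial, which is what makes $\rho$ exist) and through the completely Lamperti case, where multiplicativity of $J_{U_T}$ upgrades $U_T$ to a complete isometry via Theorem \ref{ISOthm} and Remark \ref{isothmm1} --- that final step of yours is correct. Your alternative route, factoring $T$ through a ``compression by $\rho$'' and dilating with a bilateral-shift tensor algebra, would additionally force you to make sense of isometries on $L_p$ of tensor products and to extend the partially defined isometry $\rho^{1/p}x\mapsto Tx$ from a range to a full $L_p$-space; none of this is carried out in the proposal, and none of it is needed.
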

\begin{proof}
	Let $T:L_p(\mathcal  M)\to L_p(\mathcal  M)$ be a Lamperti contraction and let $\rho$ be given as in the previous lemma. Then we have \begin{equation}\label{radon1}\|T(x)\|_p^p-\|x\|_p^p=\widetilde{\tau}(|x|^p)-\tau(|x|^p)=\tau((\rho  -1)|x|^p)\end{equation} for all $x\in\mathcal{S}(\mathcal{M}).$ Define \[S_T:L_p(\mathcal{M} )\to L_p(\mathcal{M} ),\quad S_T(x)=(1-\rho  )^{\frac{1}{p}}x, \quad x\in \mathcal{S}(\mathcal{M}).\] Thus we see from \eqref{radon1} that \begin{equation}\label{iso11}
	\|T(x)\|_p^p+\|S_T(x)\|_p^p=\|x\|_p^p
	\end{equation} for all $x\in L_p(\mathcal{M}).$ Consider the linear map \[U_T:\ell_p(L_p(\mathcal{M}))\to\ell_p(L_p(\mathcal{M}))\] defined as the following \[U_T(x_0,x_1,\dots)=(T(x_0),S_T(x_0),x_1,x_2,\dots).\] By \eqref{iso11} $U_T$ becomes an isometry. We also define the maps \[i:L_p(\mathcal{M})\to\ell_p(L_p(\mathcal{M})),\quad 
	i(x)=(x,0,\dots)\] and  \[j:\ell_p(L_p(\mathcal{M}))\to L_p(\mathcal{M}),\quad j(x_0,x_1\dots)=x_0.\]
	Clearly, $i$ is a complete isometry and $j$ is a complete contraction. Note that if $T=w_Tb_TJ_T$ as in Theorem \ref{charac}
	then $U_T=w_{U_T}b_{U_T}J_{U_T},$ where   \[w_{U_T}\colon=(w_T,s((1-\rho  )^{\frac{1}{p}}),1,\dots),\]   is a partial isometry, $b_{U_T}\colon=(b,(1-\rho  )^{\frac{1}{p}},1,\dots)$ is a self-adjoint positive operator affiliated with the von Neumann algebra $\oplus_{n=0}^\infty \mathcal M$ and \[J_{U_T}(x_0,x_1,x_2,\dots)\coloneqq (J(x_0),x_0s((1-\rho  )^{\frac{1}{p}}),x_1,\dots),\quad x_i\in\mathcal{M},\ i\geq 0\] is a normal Jordan $*$-homomorphism  on $\oplus_{n=0}^\infty \mathcal M.$ Therefore, by Theorem \ref{scharac} if $T$ is completely Lamperti, then $J_T$ and $J_{U_T}$ are multiplicative and ${U_T}$ is a complete isometry by Theorem \ref{ISOthm} and Remark \ref{isothmm1}.
	
	Note that for any Lamperti contractions $T_1,\dots,T_n$ on $L_p(\mathcal{M})$, we have 
	\[T_1\dots T_n=jU_{T_1}\dots U_{T_n}i\] for all $n\geq 0.$ This completes the proof.
\end{proof}
\begin{rem}\label{posiisimp}
	In Proposition \ref{fcss}, if $T$ is positive, then $U_T$ is again positive. Moreover, it is clear that $i$ and $j$ are always completely positive.
\end{rem}
\begin{rem}\label{shitty}
	Notice that in Proposition \ref{fcss} each $U_T$ is actually a Lamperti isometry for all $1\leq p<\infty.$ Moreover it is completely Lamperti if so is $T$.
\end{rem}

We remark that these dilations also allow to improve Theorem \ref{scharac}   for positive Lamperti operators. Some part of the results have been pointed out to us by C\'edric Arhancet. 
\begin{prop}\label{prop:cp}
	Let $1\leq p<\infty.$
	Let $T:L_p(\mathcal{M})\to L_p(\mathcal{M})$ be a positive Lamperti operator. Then the following assertions are equivalent:
	\begin{enumerate}
		\item $T$ is completely Lamperti;
		\item $T$ is completely positive;
		\item $T$ is $2$-positive;
		\item The map $J$ in Theorem \ref{charac} is actually a $*$-homomorphism.
	\end{enumerate}
\end{prop}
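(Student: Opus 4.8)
The plan is to prove the cycle $(4)\Rightarrow(2)\Rightarrow(3)\Rightarrow(4)$, noting that $(1)\Leftrightarrow(4)$ is already supplied by Theorem~\ref{scharac} and that $(2)\Rightarrow(3)$ is automatic, since complete positivity trivially entails $2$-positivity. All four conditions are invariant under the scaling $T\mapsto\lambda T$ (which rescales $b$ but leaves $J$ untouched), so I would first reduce without loss of generality to the case where $T$ is a contraction, making the dilation of Proposition~\ref{fcss} available.

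For $(4)\Rightarrow(2)$, I would use that positivity of $T$ forces $w=J(1)=s(b)$ by Theorem~\ref{charac}, so that $T(x)=bJ(x)$ on $\mathcal{S}(\mathcal M)$; since every spectral projection of $b$ commutes with the range of $J$, this may be rewritten as $T(x)=b^{1/2}J(x)b^{1/2}$. If $J$ is a genuine $*$-homomorphism, then so is $J_n=I_{M_n}\otimes J$, and as recorded in the proof of Theorem~\ref{scharac} one has $I_{S_p^n}\otimes T=b_nJ_n=b_n^{1/2}J_n(\cdot)b_n^{1/2}$ with $b_n=1_{M_n}\otimes b$. As $J_n$ is positive and conjugation by $b_n^{1/2}$ preserves positivity, $I_{S_p^n}\otimes T$ is positive for every $n$, that is, $T$ is completely positive.

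The main obstacle is $(3)\Rightarrow(4)$, and here I would route through the dilation together with Arhancet's Theorem~\ref{thm:cedric}. Let $U_T$ be the positive Lamperti \emph{isometry} on $\ell_p(L_p(\mathcal M))=L_p\bigl(\bigoplus_{n\geq0}\mathcal M\bigr)$ constructed in Proposition~\ref{fcss}, with representation $U_T=w_{U_T}b_{U_T}J_{U_T}$. The crucial observation is that $2$-positivity of $T$ transfers to $U_T$: writing a positive element of the matrix amplification as $(X_0,X_1,\dots)$ with each $X_i\geq0$, the defining formula gives $(I\otimes U_T)(X_0,X_1,\dots)=\bigl((I\otimes T)X_0,\,(I\otimes S_T)X_0,\,X_1,\dots\bigr)$, and each coordinate is positive because $T$ is $2$-positive while $S_T$---multiplication by the positive central element $(1-\rho)^{1/p}$---is completely positive (Remark~\ref{posiisimp}). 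Since positivity in the direct sum is coordinatewise, $U_T$ is $2$-positive. Applying Theorem~\ref{thm:cedric} to the positive isometry $U_T$ then shows $J_{U_T}$ is multiplicative, and comparing the first coordinate of the explicit formula for $J_{U_T}$ in Proposition~\ref{fcss} yields $J(x_0y_0)=J(x_0)J(y_0)$; since $J$ is a priori a Jordan $*$-homomorphism, this upgrades it to a $*$-homomorphism, which is $(4)$.

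The delicate points are therefore the clean verification that positivity is coordinatewise in the $\ell_p$-direct sum and that $S_T$ is completely positive, together with the bookkeeping that carries Arhancet's isometric criterion back to $T$ through the dilation. Everything else is a formal consequence of Theorems~\ref{charac}, \ref{scharac} and~\ref{thm:cedric}.
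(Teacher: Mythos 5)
Your proposal is correct and follows essentially the same route as the paper's proof: the crucial implication (3)$\Rightarrow$(4) is handled exactly as in the paper, by transferring $2$-positivity of $T$ to the Lamperti isometry $U_T$ of Proposition~\ref{fcss}, invoking Theorem~\ref{thm:cedric} to get $J_{U_T}$ multiplicative, and reading multiplicativity of $J$ off the first coordinate of $J_{U_T}$ (the paper compresses this into two sentences; your coordinatewise-positivity check and the complete positivity of $S_T$ are precisely the details it leaves implicit). The one genuine divergence is (4)$\Rightarrow$(2): the paper again routes through the dilation, arguing that $J_{U_T}$ is a $*$-homomorphism, so $U_T$ is completely positive by Theorem~\ref{thm:cedric} and hence so is $T=jU_Ti$, whereas you argue directly from the amplified structural formula $I_{S_p^n}\otimes T = b_n^{1/2}J_n(\cdot)\,b_n^{1/2}$ (legitimate, since $w_n=J_n(1)=s(b_n)$ by positivity and the spectral projections of $b_n$ commute with $J_n(\mathcal M_n)$, with boundedness of the amplifications guaranteed by Theorem~\ref{scharac}); your version is more elementary and self-contained for that implication, while the paper's is shorter given that Theorem~\ref{thm:cedric} is already in hand. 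Your explicit normalization to the contractive case via scaling is also a worthwhile point of rigor, since Proposition~\ref{fcss} is stated for contractions while Proposition~\ref{prop:cp} is not, and the paper passes over this silently.
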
 
\begin{proof}
	By Theorem \ref{scharac}, it suffices to prove the equivalence between {(ii)}, {(iii)} and {(iv)}. If {(iv)} holds, then $J_{U_T}$ in the proof of Proposition \ref{fcss} is also a $*$-homomorphism. Thus according to Theorem \ref{thm:cedric}, $U_T$ is completely positive, and hence so is $T=jU_T i$. Conversely, if $T$ is $2$-positive, then $U_T$ is is also $2$-positive. Therefore by  Theorem \ref{thm:cedric}, $J_{U_T}$ is multiplicative. In particular so is $J$.
\end{proof}
In the following  we will use some tools from \cite{facklergluck19dilation} to enlarge our class of dilatable operators.
\begin{thm}\label{GDI}Let $1<p<\infty.$ Suppose that $S\subseteq B(L_p(\mathcal M))$ has a  simultaneous (resp. complete simultaneous) dilation. Then each operator $T\in \operatorname{conv}(S)$ has an   $N$-dilation (resp. complete $N$-dilation) for all $N\in\mathbb{N}.$  \end{thm}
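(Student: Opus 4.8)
The plan is to reduce the dilation of a convex combination $T=\sum_{i=1}^{k}\lambda_i T_i\in\operatorname{conv}(S)$ to the dilation of a \emph{single} convex combination of isometries, and then to feed the latter into the construction of Fackler--Gl\"uck \cite{facklergluck19dilation}. First I would exploit the simultaneous dilation of $S$. By Definition \ref{mdil}, and as in the explicit construction of Proposition \ref{fcss}, there are a von Neumann algebra $\mathcal N$, contractions $Q,J$, and a fixed assignment $R\mapsto U_R\in\mathcal U$ (isometries, resp.\ complete isometries) such that $R_1\cdots R_n=QU_{R_1}\cdots U_{R_n}J$ for every word $R_1,\dots,R_n\in S$. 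Expanding the $n$-th power and applying this to each word,
\[
T^n=\Big(\sum_{i=1}^k\lambda_i T_i\Big)^n=\sum_{i_1,\dots,i_n=1}^k\lambda_{i_1}\cdots\lambda_{i_n}\,QU_{T_{i_1}}\cdots U_{T_{i_n}}J,
\]
and factoring $Q$ and $J$ out of the sum yields
\[
T^n=Q\Big(\sum_{i=1}^k\lambda_i U_{T_i}\Big)^n J=QV^nJ,\qquad V:=\sum_{i=1}^k\lambda_i U_{T_i}.
\]
Thus $V$ is a convex combination of the isometries $U_{T_i}$ on $L_p(\mathcal N)$, and it only remains to $N$-dilate $V$.

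The key step, and the point where the machinery of \cite{facklergluck19dilation} enters, is the dilation of this single convex combination $V$ of isometries. The underlying idea is a probabilistic shift construction: on the space $L_p(\Omega;L_p(\mathcal N))=L_p(L_\infty(\Omega)\overline{\otimes}\mathcal N)$, where $\Omega=\{1,\dots,k\}^{\mathbb N}$ carries the Bernoulli measure assigning weight $\lambda_i$ to the symbol $i$, one sets
\[
(\widehat U F)(\omega)=U_{\omega_1}\big(F(\sigma\omega)\big),\qquad (\widehat J x)(\omega)=x,\qquad \widehat Q(F)=\int_\Omega F\,d\mathbb P,
\]
where $\sigma$ is the left shift. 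A direct computation gives $(\widehat U^n F)(\omega)=U_{\omega_1}\cdots U_{\omega_n}F(\sigma^n\omega)$, so that $\widehat Q\,\widehat U^n\,\widehat J=(\sum_i\lambda_i U_{T_i})^n=V^n$; truncating $\Omega$ to words of length $N$ suffices to reproduce this for $0\le n\le N$, which is precisely the form in which Fackler--Gl\"uck's $N$-dilation is phrased. Because each $U_{T_i}$ is an isometry and $\sigma$ is measure-preserving, $\widehat U$ is isometric; if moreover the $U_{T_i}$ are complete isometries, then so is $\widehat U$, giving the complete $N$-dilation.

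Finally I would compose the two stages: putting $\widetilde Q=Q\widehat Q$ and $\widetilde J=\widehat J J$ (contractions, and completely contractive in the completely bounded case, as compositions of such maps), one obtains $T^n=\widetilde Q\,\widehat U^n\,\widetilde J$ for $0\le n\le N$, which is the desired (complete) $N$-dilation. I expect the main obstacle to lie in the second step: verifying that the operator $\widehat U$, assembled from the \emph{non-multiplicative} isometries $U_{T_i}$, is genuinely a (complete) isometry on the noncommutative $L_p$-space. This relies on the Bochner identification $L_p(L_\infty(\Omega)\overline{\otimes}\mathcal N)\cong L_p(\Omega;L_p(\mathcal N))$ together with the fact that the Koopman operator of the measure-preserving shift is isometric, and on checking that the finite, truncated version of the construction correctly realizes $V^n$ for all $n\le N$ while keeping $\widehat U$ isometric across the boundary of the truncation --- the delicate combinatorial content supplied by \cite{facklergluck19dilation}.
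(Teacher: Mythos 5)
Your proposal is correct, and its first step is exactly the paper's: both exploit the simultaneous dilation of $S$ (with the \emph{fixed} assignment $T_i\mapsto U_{T_i}$, as provided by Proposition \ref{fcss} and intended in Definition \ref{mdil} --- you are right to flag this point, since the factorization $T^n=Q\big(\sum_i\lambda_i U_{T_i}\big)^nJ$ fails if the isometries were allowed to depend on the word) to reduce the problem to dilating a single convex combination $V=\sum_i\lambda_iU_{T_i}$ of (complete) isometries. Where you genuinely diverge is the second step. The paper reproduces Fackler--Gl\"uck's finite cyclic construction: the dilation space is $\ell_p^{\#\mathfrak{I}}(\ell_p^N(L_p(\mathcal M)))$, the isometry is $U=\oplus_{\underline{i}}U_{\underline{i}}$ with $U_{\underline{i}}((x_k))=(T_{i_k}x_{\sigma(k)})$ for the $N$-cycle $\sigma$, and the identity $T^n=QU^nJ$ for $0\le n\le N$ is quoted from \cite{facklergluck19dilation}. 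You instead use a one-sided Bernoulli shift, and this works: $\widehat U$ is the composition of the Koopman isometry of the measure-preserving shift with the pointwise map $(MF)(\omega)=U_{\omega_1}(F(\omega))$, which is (completely) isometric because each $U_i$ is; $\widehat J$ is a complete isometry, $\widehat Q$ is completely contractive since $\mathbb P$ is a probability measure; and $\widehat Q\widehat U^n\widehat J=V^n$ holds because the integrand $\omega\mapsto U_{\omega_1}\cdots U_{\omega_n}x$ is a simple function on cylinder sets of measure $\lambda_{i_1}\cdots\lambda_{i_n}$. Notably your identity holds for \emph{all} $n\ge 0$, so you obtain a genuine infinite dilation inside a tracial $L_p$-space $L_p(L_\infty(\Omega)\overline{\otimes}\mathcal N)$ --- strictly stronger than the claimed $N$-dilations, and something the paper only reaches through the ultraproduct/Haagerup-$L_p$ machinery of Remark \ref{rem:dilation}. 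The trade-off is that your $\widehat U$ is a non-surjective isometry (the one-sided shift is not invertible), which is admissible under Definition \ref{dide1} but would not qualify in dilation frameworks demanding invertible isometries; keeping $U$ invertible with a finite construction is precisely what the cyclic wrap-around buys.

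One small correction: drop the aside about "truncating $\Omega$ to words of length $N$." On the truncated space $\{1,\dots,k\}^N$ the shift is no longer an endomorphism (it maps length-$N$ words to length-$(N-1)$ words), and repairing this boundary defect is exactly the role of Fackler--Gl\"uck's cyclic shift; it is not a routine verification. But since your untruncated construction already realizes $V^n$ for every $n\ge 0$, the truncation is unnecessary and the "delicate combinatorial content" you defer to \cite{facklergluck19dilation} never needs to be invoked.
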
 
\begin{proof}
	We will use the construction given in \cite[Proof of Theorem 4.1]{facklergluck19dilation}.	We take a tuple of scalars $\lambda\coloneqq (\lambda_1,\ldots,\lambda_n)$ with $\sum_{i=1}^n\lambda_i=1$ and {$\lambda_i\geq 0$} for all $1\leq i\leq n.$ Also take $T=\sum_{i=1}^n\lambda_i T_i $ where $T_i\in S$.  As in \cite{facklergluck19dilation}, without loss of generality, we may assume that each $T_i$ is an isometry as $S$ admits a   simultaneous dilation for $1<p<\infty.$ Let us define the set of tuples \[\mathfrak{I}=\{\underline{i}\coloneqq (i_1,\ldots,i_N):\forall\, 1\leq k \leq N, i_k\in \{1,\ldots,n\}\}.\] 
	Denote  \[ \lambda _{\underline{i}}
	= \prod_{k=1}^N\lambda_{i_k},\quad \underline{i}\in\mathfrak{I}.\] Note that $\sum_{\underline{i}\in\mathfrak{I}} \lambda _{\underline{i} }=1.$ Define $Y=\ell_p^{\#{\mathfrak{I}}}(\ell_p^N(L_p(\mathcal{M}))).$ Endowed with the $\ell_p$-direct sum norm, $Y$ becomes a noncommutative $L_p$-space equipped with a normal faithful semifinite trace. Define $Q:Y\to L_p(\mathcal{M})$ as \[Q((x_{k,\underline{i}})_{k\in\{1,\dots,N\},\underline{i} \in\mathfrak I})=\sum_{\underline{i}\in\mathfrak{I}}(\frac{\lambda_{\underline{i}}}{N})^{\frac{1}{p^\prime}}\sum_{k=1}^Nx_{k,\underline{i}},\] where $\frac{1}{p}+\frac{1}{p^\prime}=1.$ Define $J:L_p(\mathcal{M})\to Y$ as $Jx=(J_{\underline{i}} x)_{\underline{i}},$ where \[J_{\underline{i}} x=(\frac{\lambda_{\underline{i}}}{N})^{\frac{1}{p}}(x,\dots,x).\]
	Obviously $J$ is   completely positive; it is a complete isometry since $\sum_{\underline{i}}\lambda_{\underline{i}}=1$. As in \cite{facklergluck19dilation}, one can use H\"older's inequality to check that $Q$ is completely contractive. Moreover $Q$ is completely positive.

	For each $\underline{i}\in\mathfrak I,$ define the linear map $U_{\underline{i}}:\ell_p^N(L_p(\mathcal{M}))\to \ell_p^N(L_p(\mathcal{M}))$ as \[U_{\underline{i}}((x_k)_{1\leq k\leq N}) = (T_{ i _k}x_{\sigma(k)})_{1\leq k\leq N},\] where $\sigma:\{1,\dots,N\}\to\{1,\dots,N\}$ is the $N$-cycle. Note that the map $(x_k)\mapsto (x_{\sigma(k)})$ is completely isometric and completely positive, and that $T_{ i _k}$ is also isometric.  Let us define the linear map \[U:Y\to Y
	,\quad
	U=\oplus_{\underline{i}\in\mathfrak I}U_{\underline{i}}.\] 
	Then $U$ is isometric, and it is moreover completely isometric if so are $T_{ i _k}$'s.
	The identity \[T^n=QU^nJ\] for $n\in\{0,\dots,N\}$ has been proved in \cite[Proof of Theorem 4.1]{facklergluck19dilation}. This completes the proof of the theorem.
\end{proof}

Together with Proposition \ref{fcss}, we immediately obtain the following result in our particular setting.
\begin{cor}\label{CO4.3}Let $1<p<\infty.$ Each operator $T\in \operatorname{conv} (\mathbb{SS}(L_p(\mathcal{M})))$ has an $N$-dilation for all $N\in\mathbb{N},$ and each $T\in \operatorname{conv}(\mathbb{CSS}(L_p(\mathcal{M})))$ has a complete $N$-dilation for all $N\in\mathbb{N}.$ Moreover, if this operator $T $ is positive, then all the maps $Q,U$ and $J$ as in Definition \ref{mdil} can be taken to be positive.
\end{cor}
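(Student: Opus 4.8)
The plan is to read this corollary off directly from Theorem \ref{GDI} by feeding in the two families supplied by Proposition \ref{fcss}, and then to propagate positivity through the two constructions. Taking $S = \mathbb{SS}(L_p(\mathcal{M}))$, Proposition \ref{fcss} provides a simultaneous dilation, so Theorem \ref{GDI} yields that every $T \in \operatorname{conv}(\mathbb{SS}(L_p(\mathcal{M})))$ admits an $N$-dilation for all $N \in \mathbb{N}$. Taking $S = \mathbb{CSS}(L_p(\mathcal{M}))$, Proposition \ref{fcss} gives a complete simultaneous dilation, and Theorem \ref{GDI} then delivers a complete $N$-dilation for every $T \in \operatorname{conv}(\mathbb{CSS}(L_p(\mathcal{M})))$ and all $N$. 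This settles the first two assertions without additional work.

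For the positivity claim I would fix a representation $T = \sum_{i=1}^n \lambda_i T_i$ with $T_i \in \mathbb{SS}^+(L_p(\mathcal{M}))$ (resp. $\mathbb{CSS}^+(L_p(\mathcal{M}))$) and unwind the two stages hidden in the proof of Theorem \ref{GDI}. In the first stage one replaces each $T_i$ by its simultaneous-dilation isometry $U_{T_i}$ from Proposition \ref{fcss}, so that a product $T_{i_1}\cdots T_{i_k}$ equals $j\, U_{T_{i_1}}\cdots U_{T_{i_k}}\, i$; expanding $T^n$ then gives $T^n = j\,\bigl(\sum_{i}\lambda_i U_{T_i}\bigr)^n i$. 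By Remark \ref{posiisimp} each $U_{T_i}$ is positive and the maps $i,j$ are completely positive. In the second stage one applies the Fackler--Gl\"uck construction of Theorem \ref{GDI} to the positive isometries $U_{T_i}$, obtaining a space $Y$ together with maps $J_Y, Q_Y$ and an isometry $U$ such that $\bigl(\sum_i \lambda_i U_{T_i}\bigr)^n = Q_Y U^n J_Y$ for $0 \le n \le N$. As recorded in the proof of Theorem \ref{GDI}, $J_Y$ and $Q_Y$ are completely positive (being assembled from the scalar weights $(\lambda_{\underline{i}}/N)^{1/p}$ and coordinate sums), while $U = \bigoplus_{\underline{i}} U_{\underline{i}}$ is positive since each $U_{\underline{i}}$ composes the completely positive cyclic shift $(x_k)\mapsto(x_{\sigma(k)})$ with the positive isometries $U_{T_{i_k}}$.

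Composing the two stages, $T^n = (j Q_Y)\, U^n\, (J_Y i)$, so the maps required by Definition \ref{mdil} are $Q = j Q_Y$, $J = J_Y i$ and $U$; each is positive because a composition of (completely) positive maps is positive. This furnishes a positive $N$-dilation, and in the completely support separating case the isometries involved are moreover complete isometries, giving the positive complete $N$-dilation.

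The only point demanding care is this positivity bookkeeping: one must check that the positive cone is preserved at \emph{both} stages, in particular that the reduction to isometries uses the positive dilation of Remark \ref{posiisimp} rather than an arbitrary one, and that the weight factors and the cyclic shift in the Fackler--Gl\"uck construction act positively. There is no analytic obstacle beyond this tracking, since the $N$-dilation identity itself is already guaranteed by Theorem \ref{GDI}.
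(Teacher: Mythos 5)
Your proposal is correct and takes essentially the same route as the paper: Corollary \ref{CO4.3} is obtained there immediately by feeding the simultaneous (resp.\ complete simultaneous) dilation of Proposition \ref{fcss} into Theorem \ref{GDI}, with the positivity assertion coming from Remark \ref{posiisimp} together with the complete positivity of $Q$, $J$ and the cyclic shift recorded in the proof of Theorem \ref{GDI}. Your explicit two-stage unwinding (first $T^n = j\,\bigl(\sum_i \lambda_i U_{T_i}\bigr)^n i$ with positive isometries $U_{T_i}$, then the Fackler--Gl\"uck construction applied to these) is exactly the ``without loss of generality'' reduction to isometries already built into the paper's proof of Theorem \ref{GDI}, so your positivity bookkeeping is the same argument made explicit.
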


\begin{rem}\label{rem:dilation}
	We may also consider   dilations instead of   $N$-dilations in Theorem \ref{GDI}; moreover we may consider dilations for the strong operator closures $\overline{\operatorname{conv}}^{sot}(\mathbb{SS}(L_p(\mathcal{M})))$ and $\overline{\operatorname{conv}}^{sot}(\mathbb{CSS}(L_p(\mathcal{M})))$. To this end we need to allow the appearance of Haagerup's noncommutative $L_p$-spaces instead of the usual tracial $L_p$-spaces $L_p (\mathcal N , \tau_{\mathcal N})$ in Definition \ref{dide1} and \ref{mdil}. It is known from \cite{raynaud02ultra} that the class of all Haagerup $L_p$-spaces (over arbitrary von Neumann algebras) is stable under ultraproducts, which fulfills \cite[Assumption 2.1]{facklergluck19dilation}. Thus by \cite[Theorem 2.9]{facklergluck19dilation}, we can extend Corollary \ref{CO4.3} to obtain dilations and complete dilations. This is out of the scope of the paper, and we will leave the details to the reader and restrict ourselves in the semifinite cases. The above Corollary \ref{CO4.3} for complete $N$-dilations is sufficient for our further purpose.
\end{rem}
\begin{rem}[mixed unitary quantum channels]
	It is indeed natural to consider the above dilation theory for $\operatorname{conv} (\mathbb{SS}(L_p(\mathcal{M}))$ in view of various related works on quantum Birkhoff conjectures. For instance, consider  the family $Aut(B( \mathcal H))$ of all automorphisms of the von Neumann algebra $B(  \mathcal H)$ for a finite dimensional Hilbert space $H$. It is well-known that any $T\in Aut(B(\mathcal  H))$ is of the form $Tx=u^* xu$ for all $x\in B(\mathcal H)$ with a fixed unitary $u\in B( \mathcal H)$, which is in particular a completely positive complete isometry on $S_p (\mathcal H)$ for all $1< p <\infty$, and hence is completely Lamperti. The convex hull of $Aut(B(\mathcal  H))$ can be naturally included into the set of all unital completely positive trace preserving maps on $B(\mathcal  H)$, and the inclusion is strict if $\operatorname{dim} \mathcal H\geq 3$; this is applied in \cite{landaustreater93birkhoff} (also see \cite{mendlwolf09birkhoffpb}) to obtain a negative solution to the quantum Birkhoff conjecture. The operators in this inclusion of $\operatorname{conv}(Aut(B(\mathcal  H)))$ are referred to as \emph{mixed unitary quantum channels} in the quantum information theory (see e.g. \cite{chencao09extreme}). It follows from Corollary \ref{CO4.3} that every mixed unitary quantum channel, realized as an operator on $S_p (\mathcal H)$, has a complete $N$-dilation for any $N\geq 1$ and $1< p<\infty$. Also, the particular case of unital completely positive Schur multipliers is studied in \cite{omearapereira13selfdual} and \cite{haagerupmusat11factorization}. A matrix $m\in M_n$ defines a Schur multiplier   $T_m((a_{i,j})_{1\leq i,j\leq n})\colon=(m_{i j} a_{ij})_{1\leq i,j\leq n}$ for all $n\times n$ matrices $((a_{i j})_{1\leq i,j\leq n})$. It is shown in  \cite{omearapereira13selfdual} that $T_m$ is a mixed unitary quantum channel iff $m$ belongs to the convex hull of rank one positive definite matrices with diagonal entries equal to $1$. Note that if $m$ is such a matrix of rank one, then it is of the form $m=(z_i\bar{z_j})_{i,j=1}^n$ with $|z_i|=1$ and consequently $T_m (x)=uxu^*$ for $x\in M_n$ with $u=\sum_{i=1}^nz_ie_{ii}$; in particular $T_m\in Aut(M_n)$ and it is completely Lamperti and completely isometric on $S_p^n$. These observations recover partially some dilation theorems of \cite{arhancet13matsaev}.
\end{rem}

In the following we give a quick application of the previous results. Let $1<p\neq 2<\infty.$ For any complex polynomial $P(z)= \sum_{k=0}^na_kz^k,$ define \[a_P= (\dots,0,a_0,\dots,a_n,0,\dots)\in\ell_1(\mathbb Z)\] with $a_0$ in the $0$-th position. Define the linear operator $C(a_P):\ell_p(\mathbb Z)\to\ell_p(\mathbb Z)$ as \[C(a_P)(b)= a_P*b,\] for $b\in\ell_p(\mathbb Z).$ Also, recall that a von Neumann algebra is said to have QWEP if it is a quotient of a $C^*$-algebra having weak expectation property  (see \cite{ozawa04qwep} for details).
\begin{cor}\label{cor:matsaev}Let $1<p\neq 2<\infty $ and assume that the von Neumann algebra $\mathcal{M}$ has the \text{QWEP}. Let $T\in\overline{\operatorname{conv}}^{sot}(\mathbb{SS}(L_p(\mathcal{M})))$. Then $T$ satisfies the noncommutative Matsaev's  conjecture, i.e.
	\[\|P(T)\|_{L_p(\mathcal{M})\to L_p(\mathcal{M})}\leq \|C(a_P)\otimes I_{S_p} \|_{\ell_p(\mathbb{Z};S_p)\to \ell_p(\mathbb{Z};S_p)}\] for all complex polynomials $P$. Moreover, if  {$T\in\overline{\operatorname{conv}}^{sot}(\mathbb{CSS}(L_p(\mathcal{M})))$}, then we have \[\|P(T)\|_{cb,L_p(\mathcal{M})\to L_p(\mathcal{M})}\leq \|C(a_P)\otimes I_{S_p} \|_{\ell_p(\mathbb{Z}; S_p)\to \ell_p(\mathbb{Z}; S_p)}\] for all complex polynomials $P$.
\end{cor}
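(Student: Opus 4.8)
The plan is to reduce the statement, via the $N$-dilation of Corollary \ref{CO4.3}, to the Matsaev inequality for a single isometry, and then to establish the latter by a transference argument using the QWEP assumption. First I would pass from the strong closure to the bare convex hull. Every $S\in\mathbb{SS}(L_p(\mathcal M))$ is a contraction, so on the (bounded) set $\overline{\operatorname{conv}}^{sot}(\mathbb{SS}(L_p(\mathcal M)))$ multiplication is jointly continuous for the strong operator topology, whence $T\mapsto P(T)$ is strongly continuous there. Since the operator norm is lower semicontinuous for the strong operator topology (and, amplifying by $I_{S_p^m}$ and taking the supremum over $m$, so is the completely bounded norm), and since the right-hand side $\|C(a_P)\otimes I_{S_p}\|$ does not depend on $T$, it suffices to prove both inequalities for $T\in\operatorname{conv}(\mathbb{SS}(L_p(\mathcal M)))$ and $T\in\operatorname{conv}(\mathbb{CSS}(L_p(\mathcal M)))$ respectively; the bound then persists under strong limits.

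Next I would fix a polynomial $P$ of degree $N$ and apply Corollary \ref{CO4.3} to get an $N$-dilation (resp.\ a complete $N$-dilation) $T^n=QU^nJ$ for $0\le n\le N$, where $U$ is an isometry (resp.\ a complete isometry) and $Q,J$ are contractions (resp.\ complete contractions). Summing against the coefficients of $P$ yields
\[ P(T)=Q\,P(U)\,J, \]
so that $\|P(T)\|\le\|P(U)\|$, and likewise $\|P(T)\|_{cb}\le\|P(U)\|_{cb}$ in the completely isometric case. The problem is thereby reduced to the Matsaev inequality $\|P(U)\|\le\|C(a_P)\otimes I_{S_p}\|$ (resp.\ its completely bounded analogue) for the single isometry $U$.

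For this last point I would argue as follows. By Remark \ref{shitty} the dilation isometry $U$ is a Lamperti isometry, acting on a noncommutative $L_p$-space over a direct sum of copies of $\mathcal M$, which again has QWEP. I would first dilate $U$ to an invertible isometry $V$ on a larger QWEP $L_p$-space, so that $P(U)$ factors contractively (resp.\ completely contractively) through $P(V)$; by Theorem \ref{ISOthm} such a $V$ has the form $V=wbJ$ with $w$ a unitary and $J$ a trace-preserving Jordan $\ast$-automorphism, which in the completely isometric case is genuinely multiplicative by Theorem \ref{scharac} and Proposition \ref{prop:cp}. The iterates $V^n$ then implement a $\mathbb Z$-action and $P(V)$ becomes a convolution operator against $a_P$; embedding the algebra into an ultraproduct of matrix algebras $M_k$ through QWEP and transferring the estimate to $\ell_p(\mathbb Z;S_p^k)$, the comparison with the bilateral shift produces $\|C(a_P)\otimes I_{S_p}\|$ in the limit, in the spirit of the transference methods of \cite{arhancet13matsaev}. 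Carrying out the same computation at every matrix amplification level gives the completely bounded estimate when $U$, hence $J$, is completely isometric.

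The main obstacle I expect is precisely this transference step: turning $P(V)$ for the invertible isometry $V=wbJ$ into a genuine Fourier multiplier and matching its norm with the $S_p$-tensored scalar shift. This requires controlling the weight $b$ and handling the Jordan (rather than multiplicative) structure of $J$ in the non-completely-bounded case, and it crucially relies on QWEP to reduce the comparison to the matricial setting, where the shift multiplier norm on $\ell_p(\mathbb Z;S_p^k)$ is directly accessible.
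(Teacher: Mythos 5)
Your reduction steps are exactly those of the paper, just in the opposite order: the paper first proves the inequality for $T\in\operatorname{conv}(\mathbb{SS}(L_p(\mathcal{M})))$ and then passes to the strong closure by the same approximation/lower-semicontinuity argument you give, and your identity $P(T)=Q\,P(U)\,J$, coming from the $N$-dilation of Corollary \ref{CO4.3} with $N=\deg P$, is the way the dilation enters there as well. Up to that point your argument is sound.

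The gap is in your final step. The paper does \emph{not} prove the Matsaev inequality for the dilation isometry $U$; it observes that the dilation lives on $L_p$ of a countable direct sum of copies of $\mathcal M$, that this direct sum again has QWEP by \cite[Lemma 13.3.3]{brown2008c}, and then invokes the theorem of \cite{arhancet13matsaev} as a black box, which applies precisely to contractions admitting such dilations through isometries on QWEP-algebra $L_p$-spaces. What you propose instead is essentially to reprove Arhancet's theorem: dilate $U$ further to an invertible isometry $V$, write $V=wbJ$, view $P(V)$ as a convolution operator, and transfer via an ultraproduct embedding furnished by QWEP. Each of these steps is an open-ended task rather than a routine verification: (a) it is not established in the noncommutative setting (nor in this paper) that every Lamperti isometry dilates to an \emph{invertible} isometry on a larger QWEP $L_p$-space; (b) even for invertible $V=wbJ$, the powers have the form $V^n=w_nb_nJ^n$ with cocycle-type twisting by $w$ and $b$, so $P(V)$ is not literally a Fourier multiplier of a $\mathbb Z$-action without substantial extra work; (c) the ultraproduct/transference comparison with the shift is itself the technical heart of \cite{arhancet13matsaev}. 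You flag these obstacles yourself, but flagging them does not close them; as written, the last third of your argument is a plan, not a proof. The repair is immediate: once you have the $N$-dilations on $L_p(\oplus_n\mathcal M)$ and the QWEP stability under direct sums, quote Arhancet's theorem directly (and its completely bounded version for the $\mathbb{CSS}$ case), which is exactly what the paper does.
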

\begin{proof}Note that each $T\in \operatorname{conv}(\mathbb{SS}(L_p(\mathcal{M})))$ admits an $N$-dilation for all $N\geq 1.$ By  \cite[Lemma 13.3.3]{brown2008c}, it is easy to see that the von Neumann algebra $\oplus_{n=1}^\infty\mathcal M$ has again the QWEP. Therefore, by \cite{arhancet13matsaev} we   have \begin{equation}\label{mat1}\|P(T)\|_{L_p(\mathcal{M})\to L_p(\mathcal{M})}\leq \|C(a_P)\otimes I_{S_p} \|_{\ell_p(\mathbb{Z};S_p)\to \ell_p(\mathbb{Z};S_p)}\end{equation} for all complex polynomials $P$. For any $T\in \overline{\operatorname{conv}}^{sot}(\mathbb{SS}(L_p(\mathcal{M})))$ there exists a sequence of operators $T_j\in \operatorname{conv}(\mathbb{SS}(L_p(\mathcal{M})))$ such that $T_j\to T$ in strong operator topology. Therefore, for all $x\in L_p(\mathcal{M})$, we have \begin{equation}\label{mat2}\|P(T)x\|_{L_p(\mathcal{M})}\leq\lim_{j\to\infty}\|P(T_j)x-P(T)x\|_{L_p(\mathcal{M})}+\limsup_{j\to\infty}\|P(T_j)x\|_{L_p(\mathcal{M})}.\end{equation}
	The required conclusion follows from \eqref{mat1} and \eqref{mat2}.  {The remaining part of the proof for $T\in\overline{\operatorname{conv}}^{sot}(\mathbb{CSS}(L_p(\mathcal{M})))$ is similar.}
\end{proof}
\par

\section{Ergodic theorems for the convex hull of Lamperti contractions}\label{isodee}
In this section, we prove the maximal ergodic inequality for operators in the closed convex hull of positive Lamperti contractions, or more precisely in the class $\overline{\operatorname{conv}}^{sot}(\mathbb{SS}^+(L_p(\mathcal{M})))$. Based on the dilation theorem established in the previous section, we first need  a maximal ergodic inequality for positive isometries. Recall that throughout the paper  $C_p$ always denotes the best constant of Junge-Xu's maximal ergodic inequality \cite[Theorem 0.1]{jungexu07erg}, which is a fixed distinguished constant depending only on $p$.
\begin{thm}\label{DEEISO}
	Let $1<p<\infty.$ Let $T:L_p(\mathcal M)\to L_p{(\mathcal M)}$ be a positive   isometry. Then \[\Big\|\Big(\frac{1}{ n+1}\sum_{k=0}^nT^k x\Big)_{n\geq 0}\Big)\|_{L_p(\mathcal M;\ell_\infty)} \leq C_p\|x\|_p,\quad \forall\, x\in L_p(\mathcal M).\]
\end{thm}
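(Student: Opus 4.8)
The plan is to first put $T$ into the normal form supplied by the structure theory of Section \ref{SLAM}, and then to transfer the maximal inequality from a Dunford--Schwartz operator (for which \cite[Theorem 0.1]{jungexu07erg} applies) along an extension of $T$ to the vector-valued space $L_p(\mathcal M;\ell_\infty)$. First I would observe that a positive isometry is Lamperti by Remark \ref{arokoto} (i), so by Theorem \ref{charac} and Remark \ref{isothmm1} it has the form $T(x)=bJ(x)$ on $\mathcal S(\mathcal M)$, where $J$ is a normal Jordan $*$-monomorphism, $b\geq 0$ is affiliated with $\mathcal M$ and commutes with $J(\mathcal M)$, $w=J(1)=s(b)$, and the isometry forces the trace identity $\tau(b^pJ(x))=\tau(x)$ for $x\in\mathcal M_+$ (the equality case of Theorem \ref{charac} (iv), equivalently $\rho=1$ in Lemma \ref{lem:d}). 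I would also record that, since $A_n=\frac1{n+1}\sum_{k=0}^nT^k$ is a positive operator and $|T(x)|=T(|x|)$ for self-adjoint $x$ by Proposition \ref{NOTS}, the averages of a positive element form a positive sequence; this is what makes the maximal norm $\big\|\sup_{n\geq 0}\!^+A_nx\big\|_p$ accessible through the order-theoretic description $\inf\{\|a\|_p: A_nx\le a\}$.

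The heart of the argument, and the step I expect to be the main obstacle, is the extension of $T$ to a positive contraction $\overline T$ on $L_p(\mathcal M;\ell_\infty)$ (Proposition \ref{prop:extension}). For a \emph{positive} sequence $(x_n)$ dominated by some $a\in L_p(\mathcal M)_+$, positivity gives $Tx_n\le Ta$ and the isometry gives $\|Ta\|_p=\|a\|_p$, so $\|(Tx_n)_n\|_{L_p(\mathcal M;\ell_\infty)}\le\|(x_n)_n\|_{L_p(\mathcal M;\ell_\infty)}$ at once. The difficulty is to extend this bound to \emph{arbitrary} sequences $(x_n)=(ay_nc)$ with the \emph{same} constant: when $T$ is not completely isometric one cannot simply tensor $T$ with the identity, and indeed by Proposition \ref{prop:cp} the Jordan monomorphism $J$ genuinely carries a $*$-anti-homomorphism part whenever $T$ fails to be $2$-positive. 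Here I would use the explicit form $T(x)=bJ(x)$ together with the splitting $J=J_1\oplus J_2$ of Lemma \ref{decom} into a homomorphism and an anti-homomorphism with commuting ranges, treating the two pieces separately and exploiting the commutation of $b$ with $J(\mathcal M)$ and the trace identity to control the factorization norm. It is precisely this full-strength extension that will deliver the maximal inequality for general (not merely positive) $x$ with the sharp constant $C_p$, since the noncommutative absolute value does not satisfy $|A_nx|\le A_n|x|$ and so a crude reduction to the positive cone is unavailable.

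Finally I would combine the extension with the method of \cite{hongliaowang15erg} to reduce to Junge--Xu. The trace identity $\tau(b^pJ(\cdot))=\tau(\cdot)$ means that $J$, through its homomorphism and anti-homomorphism parts (the latter viewed on the opposite algebra), induces trace-preserving normal $*$-homomorphisms, hence genuine noncommutative Dunford--Schwartz operators, whose ergodic averages obey the maximal inequality of \cite[Theorem 0.1]{jungexu07erg} with constant $C_p$. Transporting this bound back along $\overline T$ --- it suffices, by Proposition \ref{FINT}, to control the finite truncations $\big\|\sup_{n\le N}\!^+A_nx\big\|_p$ uniformly in $N$ --- would then yield $\big\|\sup_{n\geq 0}\!^+A_nx\big\|_p\le C_p\|x\|_p$ for all $x\in L_p(\mathcal M)$. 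The two points requiring genuine care are the bookkeeping of the weight $b$ under iteration of $T$ and the verification that the transfer does not inflate the constant beyond $C_p$.
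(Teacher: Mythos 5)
Your first two steps track the paper closely: the reduction to the Yeadon-type form $T=bJ$ with the trace identity $\tau(b^pJ(\cdot))=\tau(\cdot)$, and the identification of the vector-valued extension as the crux, including essentially the right technique (the splitting $J=J_1\oplus J_2$ from Lemma \ref{decom}, the commutation of $b$ with $J(\mathcal M)$, and the trace identity controlling factorizations) --- this is exactly Lemma \ref{lem:extension lamperti}. The first gap is that you only ask for a \emph{contractive} extension $\overline T$ on $L_p(\mathcal M;\ell_\infty)$, whereas the argument needs the extension to be \emph{isometric}; that is the actual content of Proposition \ref{prop:extension}, and proving it requires more than Lemma \ref{lem:extension lamperti} (the opposite-algebra map $\Sigma$, the auxiliary trace $\varphi=\tau(b^p\Sigma^{-1}\,\cdot\,)$ on $\Sigma(J(\mathcal M))$, and a duality argument with $L_{p'}(\,\cdot\,;\ell_1)$). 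A contraction transports maximal norms in the wrong direction: the transference needs the lower bound $\big\|\sup_{n}\!^{+}A_nx\big\|_p\leq\big\|\sup_{n}\!^{+}T^kA_nx\big\|_p$ for every $k$, which only the isometric extension supplies.

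The second gap is more serious: your source of the Junge--Xu bound does not exist. The identity $\tau(b^pJ(x))=\tau(x)$ does not say $\tau(J(x))\leq\tau(x)$, so $J$ (or $J_1$, $J_2$) is \emph{not} a Dunford--Schwartz operator on $(\mathcal M,\tau)$; what the identity produces is a trace-preserving $*$-homomorphism from $(\mathcal M,\tau)$ into the \emph{different} tracial algebra $(\Sigma(J(\mathcal M)),\varphi)$, and a map between two different algebras cannot be iterated, hence has no ergodic averages to which \cite[Theorem 0.1]{jungexu07erg} could apply. Moreover, even a maximal bound for the averages of $J$ would not give one for $T$, since $T^kx=bJ(b)\cdots J^{k-1}(b)J^k(x)$ and you propose no mechanism for absorbing these compounding weights; ``transporting back along $\overline T$'' (a contraction) again goes the wrong way. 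In the paper the Dunford--Schwartz operator is not induced by $J$ at all: it is the shift $f\mapsto f(\cdot+1)$ on $\ell_\infty(\mathbb N)\overline{\otimes}\mathcal M$ acting on the orbit function $f_m(l)=T^lx$, and the weights are absorbed precisely through the chain of equalities $\big\|\sup_{n}\!^{+}A_nx\big\|_p=\big\|\sup_{n}\!^{+}T^kA_nx\big\|_p=\big\|\sup_{n}\!^{+}A_n'f_m(k)\big\|_p$ furnished by the isometric extension; averaging over $k=1,\dots,m$, applying Junge--Xu to the shift, and letting $m\to\infty$ yields the constant $C_p$. To repair your outline you must both upgrade the extension to an isometry and replace the ``$J$ induces Dunford--Schwartz operators'' step by this orbit/shift transference.
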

We will first consider the following auxiliary facts which will be essential in our proof of maximal ergodic inequality for positive isometry.
\begin{lem}\label{lem:extension lamperti}
	Let $1\leq p<\infty.$ Let $T:L_{p}(\mathcal M,\tau)\to L_{p}(\mathcal M,\tau)$ be a positive Lamperti contraction.
	Then $T$ extends to a contraction on $L_{p}(\mathcal M;\ell_{\infty})$.
\end{lem}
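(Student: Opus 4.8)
The plan is to exploit the structural representation of Theorem~\ref{charac} together with the Jordan decomposition of Lemma~\ref{decom}, and then to reduce everything to a genuine $*$-homomorphism by passing to an opposite algebra. Write $T(x)=bJ(x)$ for $x\in\mathcal S(\mathcal M)$ as in Theorem~\ref{charac} (recall $w=s(b)=J(1)$ since $T$ is positive, so $wb=b$), let $\mathcal N$ be the von Neumann algebra generated by $J(\mathcal M)$, and let $e,f\in\mathcal Z(\mathcal N)$ with $e+f=J(1)=s(b)$ be the central projections of Lemma~\ref{decom}, so that $J_1=J(\cdot)e$ is a $*$-homomorphism and $J_2=J(\cdot)f$ is a $*$-anti-homomorphism; recall also that $b$ commutes with $J(\mathcal M)$ and hence with $e,f$. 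Accordingly $T=T_1+T_2$ with $T_1x=bJ(x)e$ and $T_2x=bJ(x)f$, and since $ef=0$ the ranges of $T_1$ and $T_2$ lie in the orthogonal corners $e\mathcal M e$ and $f\mathcal M f$. For a \emph{positive} sequence $(x_n)$ the assertion is immediate: if $x_n\le a\in L_p(\mathcal M)_+$ then $Tx_n\le Ta$ by positivity, whence $(Tx_n)\in L_p(\mathcal M;\ell_\infty)$ with norm at most $\|Ta\|_p\le\|a\|_p$. The real difficulty is that a \emph{Jordan} homomorphism does not satisfy $J(ay_nc)=J(a)J(y_n)J(c)$, so an arbitrary factorization $x_n=ay_nc$ of a non-positive sequence cannot be pushed through $T$ directly.

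To overcome this I would introduce the von Neumann algebra $\mathcal P=e\mathcal M e\oplus(f\mathcal M f)^{\mathrm{op}}$, where $(\cdot)^{\mathrm{op}}$ denotes the opposite algebra. On $\mathcal P$ the map $\tilde J=J_1\oplus J_2$ becomes a genuine normal $*$-homomorphism $\mathcal M\to\mathcal P$, because reversing the product on the $f$-summand exactly compensates the anti-multiplicativity of $J_2$; moreover $\mathbf b=be\oplus bf$ is positive and affiliated with $\mathcal P$, commutes with $\tilde J(\mathcal M)$, and Theorem~\ref{charac}(iv) gives the weighted trace estimate $\tau_{\mathcal P}(\mathbf b^p\tilde J(z))=\tau(b^pJ(z))\le\tau(z)$ for $z\in\mathcal M_+$ (using that the opposite algebra carries the same trace and that $b^ps(b)=b^p$). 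Setting $\mathbf T'x=\mathbf b\tilde J(x)$, one has $\mathbf T'x=(T_1x,T_2x)$. Since $e\mathcal M e$ and $f\mathcal M f$ are orthogonal corners and passing to an opposite algebra leaves all $L_p(\cdot;\ell_\infty)$-norms unchanged (as one sees from the factorization definition by taking adjoints and relabelling), the standard $\ell_p$-direct-sum and conditional-expectation identifications for these vector-valued spaces give
\[
\big\|(Tx_n)\big\|_{L_p(\mathcal M;\ell_\infty)}=\big\|(\mathbf T'x_n)\big\|_{L_p(\mathcal P;\ell_\infty)}.
\]

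It then remains to bound the right-hand side. Given a factorization $x_n=ay_nc$ with $a,c\in L_{2p}(\mathcal M)$ and $\sup_n\|y_n\|_\infty\le1$, I would first note that, although $\tilde J$ need not extend to $L_{2p}$, the map $x\mapsto\mathbf b^{1/2}\tilde J(x)$ does extend to a contraction $L_{2p}(\mathcal M)\to L_{2p}(\mathcal P)$, via the computation $\|\mathbf b^{1/2}\tilde J(x)\|_{2p}^{2p}=\tau_{\mathcal P}(\mathbf b^p\tilde J(|x|^{2p}))\le\|x\|_{2p}^{2p}$ (valid first for $x\in\mathcal M\cap L_{2p}$ and then by density), where multiplicativity of $\tilde J$ is used. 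Using that $\tilde J$ is a $*$-homomorphism and that $\mathbf b^{1/2}$ commutes with $\tilde J(\mathcal M)$, one obtains (first for bounded $a,c$, then by approximation) the factorization
\[
\mathbf T'x_n=\big[\mathbf b^{1/2}\tilde J(a)\big]\,\tilde J(y_n)\,\big[\mathbf b^{1/2}\tilde J(c)\big],
\]
with $\|\mathbf b^{1/2}\tilde J(a)\|_{2p}\le\|a\|_{2p}$, $\|\mathbf b^{1/2}\tilde J(c)\|_{2p}\le\|c\|_{2p}$ and $\sup_n\|\tilde J(y_n)\|_\infty\le\sup_n\|y_n\|_\infty$. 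Taking the infimum over all factorizations of $(x_n)$ yields $\|(\mathbf T'x_n)\|_{L_p(\mathcal P;\ell_\infty)}\le\|(x_n)\|_{L_p(\mathcal M;\ell_\infty)}$, which combined with the displayed identity proves that $T$ is a contraction on $L_p(\mathcal M;\ell_\infty)$.

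The main obstacle is precisely the failure of complete positivity coming from the anti-homomorphism part $J_2$: it is what prevents a direct factorization argument on $\mathcal M$, and the device of replacing $f\mathcal M f$ by its opposite algebra (so that $\tilde J$ becomes multiplicative) while exploiting $ef=0$ is what simultaneously repairs the factorization and keeps the constant equal to $1$ rather than $2$. I also expect the bookkeeping around extending $\tilde J$ only after multiplication by $\mathbf b^{1/2}$, and the identification of the various $\ell_\infty$-valued norms under direct sums, opposite algebras and corner inclusions, to require some care, although each such identification is routine.
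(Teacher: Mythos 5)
Your proof is correct, but it takes a genuinely different route from the paper's own proof of Lemma \ref{lem:extension lamperti}. The paper never leaves $\mathcal M$: it starts from a factorization $x_n=a_1y_na_2$ normalized so that $\|a_1\|_{2p}=\|a_2\|_{2p}$, uses Lemma \ref{decom} to write
\[
T(x_n)=\bigl[b^{1/2}(J(a_1)e+J(a_2)f)\bigr]\,J(y_n)\,\bigl[b^{1/2}(J(a_2)e+J(a_1)f)\bigr],
\]
so the anti-homomorphism part is absorbed by swapping $a_1$ and $a_2$ inside the two outer factors, and then bounds the product of the outer $L_{2p}$-norms via the traciality of $\widetilde{\tau}_1,\widetilde{\tau}_2$ (Lemma \ref{lem:d}) and an elementary product-of-sums estimate that works precisely because the factorization was balanced. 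You instead reduce to the case of a multiplicative $J$ by passing to $\mathcal P=e\mathcal Me\oplus(f\mathcal Mf)^{\mathrm{op}}$ --- exactly the device the paper deploys later, in the proof of Proposition \ref{prop:extension}, but not here --- and then push an arbitrary factorization through the $*$-homomorphism $\tilde J$; this is cleaner, needs no normalization of $a$ and $c$, and makes transparent that the anti-homomorphism summand is the sole obstruction. What you pay for is the identification of the $L_p(\cdot;\ell_\infty)$-norms across corners, direct sums and opposite algebras, and you should be aware that this is not pure bookkeeping: only the inequality $\|(Tx_n)\|_{L_p(\mathcal M;\ell_\infty)}\le\|(\mathbf T'x_n)\|_{L_p(\mathcal P;\ell_\infty)}$ is actually needed, and to get it one converts a $\mathcal P$-factorization $(A_1,A_2)(Y_n^1,Y_n^2)(C_1,C_2)$ into the $\mathcal M$-factorization $(A_1+C_2)(Y_n^1+Y_n^2)(C_1+A_2)$, whose outer factors \emph{cross} the two corners (the opposite product on the $f$-summand reverses the roles of $A_2$ and $C_2$). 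Without first rescaling, replacing $(A_2,C_2)$ by $(\lambda A_2,\lambda^{-1}C_2)$ and optimizing over $\lambda$ (equivalently, invoking the $\ell_p$-direct-sum description of $L_p(\mathcal A_1\oplus\mathcal A_2;\ell_\infty)$, whose proof itself uses balanced factorizations and Cauchy--Schwarz), the naive norm comparison fails, e.g.\ whenever $\|A_1\|_{2p}>\|C_1\|_{2p}$ and $\|A_2\|_{2p}>\|C_2\|_{2p}$. So the balancing trick that the paper makes explicit through $\|a_1\|_{2p}=\|a_2\|_{2p}$ has not disappeared from your argument; it has migrated into the ``routine'' identifications. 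With that point filled in, the remaining steps --- the contraction $x\mapsto\mathbf b^{1/2}\tilde J(x)$ on $L_{2p}$, the factorization $\mathbf T'x_n=[\mathbf b^{1/2}\tilde J(a)]\,\tilde J(y_n)\,[\mathbf b^{1/2}\tilde J(c)]$ for bounded $a,c$ and its extension by density, and the trace computations on the opposite summand (traces being invariant under passage to the opposite algebra) --- are all sound, and the two proofs end up costing about the same: the paper's is shorter and self-contained within the factorization definition, while yours is more modular and anticipates the argument for Proposition \ref{prop:extension}.
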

\begin{proof}
	Let $(x_n)_{n\geq 0}\in L_p(\mathcal M;\ell_\infty).$ Given $\varepsilon>0,$ let us choose a factorization $x_n=a_1 y_n a_2$ such that $\sup_{n}\|y_n\|_\infty\leq 1$ and $\|a_1\|_{2p}^2=\|a_2 \|_{2p}^2\leq\|(x_n)_{n\geq 0}\|_{L_p(\mathcal M;\ell_\infty)}+\varepsilon.$ By density and without loss of generality, we assume that the elements $x_n,y_n,a_1$ and $a_2$ always belong to $\mathcal S (\mathcal M )$.

For simplicity, we first present the proof when $T$ is just a Jordan homomorphism, i.e. $T=J$ in Theorem \ref{charac}.	Let  the projections $e,f$ and the traces  $\widetilde{\tau}, \widetilde{\tau}_1 ,\widetilde{\tau}_2$ be as in Lemma \ref{lem:d}. Note that we have 
	\begin{align*}
J(x_n)& =J(a_1 y_na_2 )e+J(a_1 y_na_2 )f\\
	&=J(a_1 )J(y_n)J(a_2 )e+J(a_2 )J(y_n)J(a_1 )f\\
	&=(J(a_1 )e+J(a_2 )f)J(y_n)(J(a_2 )e+J(a_1 )f).
	\end{align*}
	We write therefore
	\[J(x_n)=\tilde{a}_1 \tilde{y}_n\tilde{a}_2 \] with $\tilde{a}_1  =J(a_1 )e+J(a_2 )f,$ $\tilde{y}_n =J(y_n)$ and $\tilde{a }_2 =J(a_2 )e+J(a_1 )f$. 
Note that
	\begin{align*} 
	\|\tilde{a }_1\|_{2p}^{2p}&=\tau\Big(\Big((J(a_1 ^*)e+J(a_2 ^*)f)(J(a_1 )e+J(a_2 )f)\Big)^p\Big)\\ 
	&=\tau\Big(\Big((J(|a_1 |^2)e+J(|a_2 ^*|^2)f)\Big)^p\Big) =\tau\Big(J(|a_1 |^{2p})e+J(|a_2 ^*|^{2p})f\Big)\\ 
	&=\widetilde{\tau}_1(|a_1 |^{2p})+\widetilde{\tau}_2(|a_2 ^*|^{2p}),
	\end{align*}
	and similarly,
	\begin{equation*}
	\|\tilde{a }_2\|_{2p}^{2p}=\widetilde{\tau}_1(|a_2 |^{2p})+\widetilde{\tau}_2(|a_1 ^*|^{2p}).
	\end{equation*}
	Thus we have 
	\begin{align*}
	\|\tilde{a }_1\|_{2p}^{2p}\|\tilde{a }_2\|_{2p}^{2p}&=\Big(\widetilde{\tau}_1(|a_1 |^{2p})+\widetilde{\tau}_2(|a_2 ^*|^{2p})\Big)\Big(\widetilde{\tau}_1(|a_2 |^{2p})+\widetilde{\tau}_2(|a_1 ^*|^{2p})\Big)\\\label{newtrality}
	&=\Big(\widetilde{\tau}_1(|a_1 |^{2p})+\widetilde{\tau}_2(|a_2 |^{2p})\Big)\Big(\widetilde{\tau}_1(|a_2 |^{2p})+\widetilde{\tau}_2(|a_1 |^{2p})\Big),
	\end{align*}
	where the last equality follows from the traciality and normality of  $\widetilde{\tau}_1$ and $\widetilde{\tau}_2.$ Recall that we have taken $\|a_1\|_{2p} =\|a_2 \|_{2p} $. Together with Lemma \ref{lem:d} we have further
	\begin{align*}
	\|\tilde{a }_1\|_{2p}^{2p}\|\tilde{a }_2\|_{2p}^{2p}&=\Big(\widetilde{\tau}_1(|a_1 |^{2p})+\|J(|a_2 |^2)\|_p^p-\widetilde{\tau}_1(|a_2 |^{2p})\Big)\Big(\widetilde{\tau}_1(|a_2 |^{2p})+\|J(|a_1 |^2)\|_p^p-\widetilde{\tau}_1(|a_1 |^{2p})\Big)\\
	&\leq \Big(\widetilde{\tau}_1(|a_1 |^{2p})+\|a_2 \|_{2p}^{2p}-\widetilde{\tau}_1(|a_2 |^{2p})\Big)\Big(\|a_1 \|_{2p}^{2p}-\widetilde{\tau}_1(|a_1 |^{2p})+\widetilde{\tau}_1(|a_2 |^{2p})\Big)\\
	&=\Big(\|a_1 \|_{2p}^{2p}+(\widetilde{\tau}_1(|a_1 |^{2p})-\widetilde{\tau}_1(|a_2 |^{2p}))\Big)\Big(\|a_1 \|_{2p}^{2p}-(\widetilde{\tau}_1(|a_1 |^{2p})-\widetilde{\tau}_1(|a_2 |^{2p}))\Big)\\
	&=\|a_1 \|_{2p}^{4p}-(\widetilde{\tau}_1(|a_1 |^{2p})-\widetilde{\tau}_1(|a_2 |^{2p}))^2\\
	&\leq \|a_1 \|_{2p}^{4p}\leq (\|(x_n)_{n\geq 0}\|_{L_p(\mathcal M;\ell_\infty)}+\varepsilon)^{2p}.
	\end{align*}
	In other words $\|\tilde{a }_1\|_{2p}\|\tilde{a }_2\|_{2p}\leq \|(x_n)_{n\geq 0}\|_{L_p(\mathcal M;\ell_\infty)}+\varepsilon.$ Clearly $\sup_{n\geq 0}\|J(y_n)\|_\infty\leq 1.$ 
	This proves that $\|(J(x_n))_{n\geq 0}\|_{L_p(\mathcal M;\ell_\infty)}\leq  \|(x_n)_{n\geq 0}\|_{L_p(\mathcal M;\ell_\infty)}+\varepsilon$ for arbitrary $\varepsilon>0$. In particular $J$ extends to a contraction on $L_p(\mathcal M;\ell_\infty)$.

Now we prove the general case. Let $b,J$ as be given in Theorem \ref{charac} and let the projections $e,f$ and the traces  $\widetilde{\tau}, \widetilde{\tau}_1 ,\widetilde{\tau}_2$ be again as in Lemma \ref{lem:d}. Notice that
	\begin{align*}
	T(x_n)=b(J(a_1 )e+J(a_2 )f)J(y_n)(J(a_2 )e+J(a_1 )f).
	\end{align*}
Hence we have
	$T(x_n)=\tilde{a}_1 \tilde{y}_n\tilde{a}_2 $ with $\tilde{a}_1  =b^{\frac{1}{2}}(J(a_1 )e+J(a_2 )f),$ $\tilde{y}_n =J(y_n)$ and $\tilde{a }_2 =b^{\frac{1}{2}}(J(a_2 )e+J(a_1 )f)$. 
Calculating as before, we obtain
	\begin{align*} 
	\|\tilde{a }_1\|_{2p}^{2p}&=\tau\Big(\Big((J(a_1 ^*)e+J(a_2 ^*)f)b(J(a_1 )e+J(a_2 )f)\Big)^p\Big)\\ 
	&=\tau\Big(\Big(b(J(|a_1 |^2)e+J(|a_2 ^*|^2)f)\Big)^p\Big) =\tau\Big(b^p\Big(J(|a_1 |^{2p})e+J(|a_2 ^*|^{2p})f\Big)\Big)\\ 
	&=\widetilde{\tau}_1(|a_1 |^{2p})+\widetilde{\tau}_2(|a_2 ^*|^{2p}),
	\end{align*}
	and similarly,
$\|\tilde{a }_2\|_{2p}^{2p}=\widetilde{\tau}_1(|a_2 |^{2p})+\widetilde{\tau}_2(|a_1 ^*|^{2p}).$ The rest of the proof is the same as the we did when $T$ was just a Jordan homomorphism.
\end{proof}
\begin{prop}\label{prop:extension}
	Let $1<p<\infty.$ Let $T:L_{p}(\mathcal M,\tau)\to L_{p}(\mathcal M,\tau)$ be a positive   isometry.
	Then $T$ extends to an isometry on $L_{p}(\mathcal M;\ell_{\infty})$.\end{prop}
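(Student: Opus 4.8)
The plan is to prove the two-sided estimate $\|(Tx_n)_n\|_{L_p(\mathcal M;\ell_\infty)}=\|(x_n)_n\|_{L_p(\mathcal M;\ell_\infty)}$. Since a positive isometry is in particular a positive Lamperti contraction by Remark \ref{arokoto}(i), Lemma \ref{lem:extension lamperti} already yields the extension $\bar T$ together with the bound $\|(Tx_n)_n\|\le\|(x_n)_n\|$; the whole difficulty lies in the reverse inequality, i.e. in showing that $\bar T$ is \emph{isometric} and not merely contractive. I would deliberately avoid looking for a left inverse inside the class of Lamperti contractions: already in the commutative setting an $L_p$-isometry need not admit any positive Lamperti contractive left inverse (e.g. the composition operator of the doubling map), so the isometry must instead be exploited through the structural form of $T$ by transporting factorizations.

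By Theorem \ref{charac} in the isometric case I would write $T(x)=bJ(x)$ on $\mathcal S(\mathcal M)$, where $J\colon\mathcal M\to\mathcal M$ is a normal Jordan $*$-monomorphism, $s(b)=J(1)$, $b$ commutes with $J(\mathcal M)$, and $\tau(b^pJ(x))=\tau(x)$ for $x\in\mathcal M_+$. Let $\mathcal N_J$ be the von Neumann algebra generated by $J(\mathcal M)$, equipped with the normal weight $\phi=\tau(b^p\,\cdot\,)$, which is tracial by Lemma \ref{lem:d}. The relation $\tau(b^pJ(\cdot))=\tau$ then factorizes $T=V\circ J_0$, where $J_0\colon L_p(\mathcal M,\tau)\to L_p(\mathcal N_J,\phi)$ is the isometry induced by the Jordan monomorphism and $V\colon L_p(\mathcal N_J,\phi)\to L_p(\mathcal M,\tau)$ is the weighted inclusion $\eta\mapsto b\eta$, which is isometric precisely because $b$ commutes with $\mathcal N_J$. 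It then suffices to verify that each factor extends to an isometry between the associated vector-valued spaces $L_p(\,\cdot\,;\ell_\infty)$.

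For $J_0$ I would use Lemma \ref{decom} to split $J=J(\cdot)e\oplus J(\cdot)f$ into a $*$-homomorphism part and a $*$-anti-homomorphism part over central projections $e,f\in\mathcal Z(\mathcal N_J)$, so that $L_p(\mathcal N_J,\phi;\ell_\infty)$ decomposes as the corresponding $\ell_p$-direct sum. On the homomorphism summand, $J_0$ is (a corner of) a trace-preserving normal $*$-homomorphism and is completely positive; such maps induce isometries on $L_p(\,\cdot\,;\ell_\infty)$, since they possess completely positive, hence $\ell_\infty$-contractive, trace-preserving conditional-expectation left inverses, which forces $\ell_\infty$-isometry there. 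On the anti-homomorphism summand, $J_0$ becomes a $*$-homomorphism into the opposite algebra $\mathcal N_J^{\mathrm{op}}$; observing that a factorization $a\,y_n\,b$ in $\mathcal N_J^{\mathrm{op}}$ reads as $b\,y_n\,a$ in $\mathcal N_J$, the identity map is an isometry $L_p(\mathcal N_J^{\mathrm{op}};\ell_\infty)=L_p(\mathcal N_J;\ell_\infty)$, so the previous argument applies after passing to $\mathcal N_J^{\mathrm{op}}$. Finally, the commutation of $b$ with $\mathcal N_J$ lets one transport $\ell_\infty$-factorizations through $V$. Composing, $\bar T=\bar V\circ\bar J_0$ is an $\ell_\infty$-isometry, which completes the reverse inequality.

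The hard part will be exactly the anti-homomorphism component: this is where $T$ fails to be completely isometric (cf. Theorem \ref{scharac} and Proposition \ref{prop:cp}), so complete positivity is unavailable and the straightforward extension argument collapses; the remedy is the passage to the opposite algebra, whose isometric compatibility with the vector-valued maximal norm is the crucial structural input and reflects the remark in the introduction that the result is non-obvious when the isometry is not completely isometric. A secondary, more technical point will be the bookkeeping of the weight $\phi$, namely checking that the weighted inclusion $V$ and the conditional expectations entering the homomorphism part are genuinely $\ell_\infty$-contractive.
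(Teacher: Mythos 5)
Your overall architecture (write $T=bJ$ via Theorem \ref{charac}, split $J$ by Lemma \ref{decom}, pass to the opposite algebra on the anti-homomorphism part, and get the contractive direction from Lemma \ref{lem:extension lamperti}) is genuinely close in spirit to the paper's proof, whose map $\Sigma=\operatorname{Id}\oplus\sigma$ is exactly your opposite-algebra trick. But two steps in your execution fail. First, the decomposition $J=J(\cdot)e\oplus J(\cdot)f$ splits not only the algebra but also the trace: by Lemma \ref{lem:d}, $\tau(b^pJ(x)e)=\widetilde\tau_1(x)$ and $\tau(b^pJ(x)f)=\widetilde\tau_2(x)$, and only the \emph{sum} $\widetilde\tau_1+\widetilde\tau_2$ equals $\tau$ when $T$ is isometric. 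So the homomorphism summand is \emph{not} a trace-preserving map from $(\mathcal M,\tau)$; it preserves only $\widetilde\tau_1$. Your conditional-expectation left inverses therefore yield at best $\|(J(x_n)e)_n\|^p+\|(J(x_n)f)_n\|^p\geq \|(x_n)\|^p_{L_p(\mathcal M,\widetilde\tau_1;\ell_\infty)}+\|(x_n)\|^p_{L_p(\mathcal M,\widetilde\tau_2;\ell_\infty)}$, whereas for positive sequences one only knows the opposite comparison $\|(x_n)\|^p_{L_p(\mathcal M,\widetilde\tau_1;\ell_\infty)}+\|(x_n)\|^p_{L_p(\mathcal M,\widetilde\tau_2;\ell_\infty)}\leq\|(x_n)\|^p_{L_p(\mathcal M,\tau;\ell_\infty)}$ (a common majorant for $\tau$ is a majorant for each $\widetilde\tau_i$, but the two infima on the left run over \emph{different} majorants). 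Your chain of inequalities thus points the wrong way; closing it would require an additivity theorem for $\|\cdot\|^p_{L_p(\ell_\infty)}$ under central splittings of the trace, which you neither state nor prove. The paper never separates the two summands: it straightens $J$ into the single $*$-homomorphism $\Sigma\circ J$ onto the von Neumann subalgebra $\Sigma(J(\mathcal M))$ carrying the single trace $\varphi=\tau(b^p\Sigma^{-1}\cdot)$, and exploits that $\tilde J$ and $\tilde J^{-1}$ are both positive to transfer the maximal norm via the positive-sequence formula.

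Second, for the factor $V:\eta\mapsto b\eta$, transporting factorizations through the commutation of $b$ with $\mathcal N_J$ only shows that $\bar V$ is a \emph{contraction} on the $\ell_\infty$-valued spaces; your composition argument needs $\bar V$ to be \emph{isometric} (at least on the range of $\bar J_0$), i.e.\ that multiplication by $b$ does not decrease the maximal norm. This is precisely the hard point of the whole proposition, and your last paragraph even mislabels it as checking that $V$ is ``$\ell_\infty$-contractive.'' The paper's resolution is a duality argument you do not have: it shows that $\iota:(y_n)\mapsto(b^{p/p'}\Sigma^{-1}y_n)$ is isometric at the $L_{p'}(\ell_1)$-level, computes $\iota^*\bigl((b\Sigma^{-1}x_n)_n\bigr)=(x_n)_n$, and then closes the sandwich $\|(x_n)_n\|_\varphi\leq\|(b\Sigma^{-1}x_n)_n\|_\tau=\|(T\tilde J^{-1}x_n)_n\|_\tau\leq\|(\tilde J^{-1}x_n)_n\|_\tau=\|(x_n)_n\|_\varphi$ using Lemma \ref{lem:extension lamperti}. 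Without this (or a substitute), the lower bound for $\bar V$ is simply missing. A minor further point: complete positivity alone does not give $\ell_\infty$-contractivity of your expectation-type left inverses; one needs them (sub)unital and (sub)trace-preserving, and the injectivity of the corner maps $J(\cdot)e$, $J(\cdot)f$ used to invert them is not automatic even though $J$ itself is injective.
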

\begin{proof}
For simplicity, first we give the proof of the proposition when $T=J$,
	where $J:\mathcal M\to \mathcal M$ is a normal Jordan monomorphism. Denote by $\mathcal N$ the von Neumann algebra generated by $J(\mathcal M)$. By Lemma \ref{decom}, we may write $\mathcal N = \mathcal N _1 \oplus \mathcal N_2$ where $\mathcal N_1 $ and $\mathcal N_2$ are two von Neumann subalgebras of $\mathcal N$, and write $J=J_1 + J_2$ such that $J_1 : \mathcal M \to \mathcal N_1$ is a normal $*$-homomorphism  and $J_2:\mathcal M \to \mathcal N_2$ is a normal $*$-anti-homomorphism. Let $\sigma: \mathcal N_2 \to \mathcal N_2^{op}$ be the usual opposite map and define 
	\[\Sigma: \mathcal N \to  \mathcal N _1 \oplus \mathcal N_2^{op},\quad \Sigma=\operatorname{Id}_{\mathcal N_1} \oplus  \sigma.\] Then $\Sigma\circ J$ is a normal $*$-homomorphism and in particular its image $\Sigma(J(\mathcal M))$ is a von Neumann subalgebra of $\mathcal N _1 \oplus \mathcal N_2^{op}$. We consider the faithful weight
	\[
	\varphi:\Sigma(J(\mathcal M))_{+}\to[0,\infty],\quad x\mapsto\tau(\Sigma^{-1} x).
	\]
	We claim that $\varphi$ is a normal semifinite trace on $\Sigma(J(\mathcal M))$. Indeed, for $x\in \mathcal M$, we have
	\begin{align*}
	\varphi ((\Sigma Jx^*)(\Sigma Jx)) & = \varphi((J_1 x^*)(J_1 x)) + \varphi((\sigma J_2 x^*)(\sigma J_2 x)) \\
	& = \varphi(J_1 ( x^* x )) + \varphi(\sigma((  J_2 x)( J_2 x^*)) )
	= \varphi(J_1 (x^*  x)) + \varphi(\sigma( J_2  (x^*x))) \\
	& =\tau (J_1(x^*x)) + \tau (J_2(x^*x)). 
	\end{align*}
	Thus by Lemma \ref{lem:d} we see that  $\varphi$ is tracial. We consider
	the associated noncommutative $L_{p}$-space $L_{p}(\Sigma(J(\mathcal M)),\varphi)$.
	It is easy to see that $\tilde{J}:=\Sigma\circ J:L_{p}(\mathcal M,\tau)\to L_{p}(\Sigma(J(\mathcal M)),\varphi)$ extends to a positive surjective isometry.
	
	As a result we see that $\tilde{J}^{-1}$ is well-defined, positive and isometric on $ L_{p}(\Sigma(J(\mathcal M)),\varphi) $. Therefore, for any positive sequence $(x_n)_{n\geq 0}\subset  L_p (\mathcal M ) $ and any $a \in  L_{p}(\Sigma(J(\mathcal M)),\varphi) _+$, we see that $\tilde{J} x_n \leq a$  if and only if $ x_n \leq \tilde{J}^{-1}a$. Recall that 
	\[\|(x_n )_{n\geq 0}\|_{L_{p}(\mathcal M,\tau;\ell_\infty)}=\inf\{\|a\|_p:x_n\leq a, a\in L_p(\mathcal M,\tau)_{+}\}.\]
	We see that $\tilde{J}$ extends to an isometry from $L_{p}(\mathcal M,\tau;\ell_\infty)$ onto $L_{p}(\Sigma(J(\mathcal M)),\varphi;\ell_\infty)$. 
	
	It remains to prove  that the embedding \[L_{p}(\Sigma(J(\mathcal M)),\varphi;\ell_{\infty})\to L_{p}(\mathcal M,\tau;\ell_{\infty}),\quad (x_{n})_{n\geq 0}\mapsto(\Sigma^{-1} x_{n})_{n\geq 0}\]
	is isometric. 
	Let $1<p^\prime<\infty$ with $\frac{1}{p}+\frac{1}{p^\prime}=1$. For $y\in \Sigma(J(\mathcal M))_{+}$,
	we have the equality
	$
	\| \Sigma^{-1}y\|_{L_{p^\prime}(\mathcal M,\tau)}^{p^\prime}=\|y\|^{p^\prime}_{L_{p^\prime}(\Sigma(J(\mathcal M),\varphi)}.$
	So the map
	\[
	\iota:L_{p^\prime}(\Sigma(J(\mathcal M)),\varphi;\ell_{1})\to L_{p^\prime}(\mathcal M,\tau;\ell_{1}),\quad(y_{n})_{n\geq 0}\mapsto( \Sigma^{-1}y_{n})_{n\geq 0}
	\]
	is isometric. Note that for $(x_{n})_{n\geq 0}\in L_{p}(\Sigma(J(\mathcal M)),\varphi;\ell_{\infty}),(y_{n})_{n\geq 0}\in L_{p^\prime}(\Sigma(J(\mathcal M)),\varphi;\ell_{1})$,
	\begin{align*}
	\Big\langle\iota^{*}((\Sigma^{-1}x_{n})_{n\geq 0}),(y_{n})_{n\geq 0}\Big\rangle & =   \Big\langle(\Sigma^{-1}x_{n})_{n\geq 0},\iota((y_{n})_{n\geq 0})\Big\rangle=\sum_{n\geq 0}\tau((\Sigma^{-1}x_{n})(\Sigma^{-1}y_{n})).
	\end{align*}
	We write $x_n = (x_n^{(1)},x_n^{(2 )})\in L_p(\mathcal N_1 )\oplus L_p(\mathcal N_2^{op} )$ and $y_n = (y_n^{(1)},y_n^{(2)})\in L_{p^\prime}(\mathcal N_1 )\oplus L_{p^\prime}(\mathcal N_2^{op} )$. Then by the traciality of $\tau$ we obtain
	\begin{align*}
	\tau((\Sigma^{-1}x_{n})(\Sigma^{-1}y_{n}))
	&=\tau(x_{n}^{(1)}  y_{n}^{(1)} ) + \tau((\sigma^{-1}x_{n}^{(2)})(\sigma^{-1}y_{n}^{(2)})) \\
	&= \tau(x_{n}^{(1)}  y_{n}^{(1)} )+\tau((\sigma^{-1}y_{n}^{(2)})(\sigma^{-1}x_{n}^{(2)}))\\
	&= \tau(x_{n}^{(1)}  y_{n}^{(1)} ) + \tau(\sigma^{-1}( x_{n}^{(2)} y_{n}^{(2)}))\\
	&= \tau(\Sigma^{-1}x_{n} y_{n}) = \varphi(x_{n}y_{n}).
	\end{align*}
	Thus combined with the previous equalities we obtain 
	\[ \Big\langle\iota^{*}((\Sigma^{-1}x_{n})_{n\geq 0}),(y_{n})_{n\geq 0}\Big\rangle  =   \sum_{n\geq 0}\varphi(x_{n}y_{n})=\Big\langle(x_{n})_{n\geq 0},(y_{n})_{n\geq 0}\Big\rangle.\]
	Therefore, we have $\iota^{*}((\Sigma^{-1}x_{n})_{n\geq 0})=(x_{n})_{n\geq 0}$. Recall that $J$ always extends to a contraction on $L_{p}(\mathcal M,\tau;\ell_{\infty})$ by Lemma \ref{lem:extension lamperti}. Hence, we observe that
	\begin{align*}
	\|(x_{n})_{n\geq 0}\|_{L_{p}(\Sigma(J(\mathcal M)),\varphi;\ell_{\infty})} & =   \|\iota^{*}((\Sigma^{-1}x_{n})_{n\geq 0})\|_{L_{p}(\Sigma(J(\mathcal M)),\varphi;\ell_{\infty})}\leq\|(\Sigma^{-1}x_{n})_{n\geq 0}\|_{L_{p}(\mathcal M,\tau;\ell_{\infty})}\\
	& =   \|(J\tilde{J}^{-1}x_{n})_{n\geq 0}\|_{L_{p}(\mathcal M,\tau;\ell_{\infty})}\leq\|(\tilde{J}^{-1}x_{n})_{n\geq 0}\|_{L_{p}(\mathcal M,\tau;\ell_{\infty})}\\
	& =   \|\tilde{J}((\tilde{J}^{-1}x_{n})_{n\geq 0})\|_{L_{p}(\Sigma(J(\mathcal M)),\varphi;\ell_{\infty})}=\|(x_{n})_{n\geq 0}\|_{L_{p}(\Sigma(J(\mathcal M)),\varphi;\ell_{\infty})}.
	\end{align*}
	Therefore $\|(x_{n})_{n\geq 0}\|_{L_{p}(\Sigma(J(\mathcal M)),\varphi;\ell_{\infty})}=\|(\Sigma^{-1}x_{n})_{n\geq 0}\|_{L_{p}(\mathcal M,\tau;\ell_{\infty})}$,  
	as desired.

	Now let us sketch the proof for the general case. By Theorem \ref{ISOthm},  Theorem \ref{charac} and Remark \ref{isothmm1}  we have $T=bJ$,
	where $J$ and $b$ are as in Theorem \ref{ISOthm}. Taking $\mathcal{N},$ $\mathcal{N}_1,$ $\mathcal{N}_2,$ $J_1$, $J_2$,  $\sigma$ and $\Sigma$ as in the beginning of the proof, we consider the  faithful weight 
	\[
	\varphi:\Sigma(J(\mathcal M))_{+}\to[0,\infty],\quad x\mapsto\tau(b^{p}\Sigma^{-1} x).
	\]
	Again $\varphi$ is a normal semifinite trace on $\Sigma(J(\mathcal M))$. This is because for $x\in \mathcal M$, we can argue as before to obtain
	\begin{align*}
	\varphi ((\Sigma Jx^*)(\Sigma Jx)) & 
	= \varphi(J_1 (x^*  x)) + \varphi(\sigma( J_2  (x^*x)))=\tau (b^{p}J_1(x^*x)) + \tau (b^{p}J_2(x^*x)). 
	\end{align*}
	Hence it follows again from Lemma \ref{lem:d} that  $\varphi$ is tracial. One can see that $\Sigma\circ J$ extends to a positive surjective isometry 
	\[
	\tilde{J}:L_{p}(\mathcal M,\tau)\to L_{p}(\Sigma(J(\mathcal M)),\varphi),\quad x\mapsto \Sigma(Jx),
	\]
	since for $x\in \mathcal{S}(\mathcal M)$,  
	\begin{align*}
	\|\tilde{J}x\|_{L_{p}(\Sigma(J(\mathcal M)),\varphi)}^{p}&=\varphi(|\Sigma(Jx)|^{p})=\tau(b^{p}\Sigma^{-1}(|\Sigma(Jx)|^{p}))=\tau(b^{p}| Jx|{}^{p})
	=\tau( |b Jx|{}^{p}) \\
	&=\|Tx\|_{L_{p}(\mathcal M,\tau)}^p=\|x\|_{L_{p}(\mathcal M,\tau)}^p.
	\end{align*}
	Therefore, arguing as when $T$ was a Jordan monomorphism,
	we see that $\tilde{J}$ extends to an isometry from $L_{p}(\mathcal M,\tau;\ell_\infty)$ onto $L_{p}(\Sigma(J(\mathcal M)),\varphi;\ell_\infty)$. 
	
	It is now enough to prove that the embedding \[L_{p}(\Sigma(J(\mathcal M)),\varphi;\ell_{\infty})\to L_{p}(\mathcal M,\tau;\ell_{\infty}),\quad (x_{n})_{n\geq 0}\mapsto(b\Sigma^{-1} x_{n})_{n\geq 0}\]
	is an isometry. 
	Let $1<p^\prime<\infty$ with $\frac{1}{p}+\frac{1}{p^\prime}=1$. For $y\in \Sigma(J(\mathcal M))_{+}$,
	we obtain
	\[
	\|b^{p/p^\prime} \Sigma^{-1}y\|_{L_{p^\prime}(\mathcal M,\tau)}^{p^\prime}=\tau(b^{p}\Sigma^{-1}(y^{p^\prime}))=\varphi(y^{p^\prime})=\|y\|^{p^\prime}_{L_{p^\prime}(\Sigma(J(\mathcal M),\varphi)}.
	\]
	Thus the map
	\[
	\iota:L_{p^\prime}(\Sigma(J(\mathcal M)),\varphi;\ell_{1})\to L_{p^\prime}(\mathcal M,\tau;\ell_{1}),\quad(y_{n})_{n\geq 0}\mapsto(b^{p/p^\prime} \Sigma^{-1}y_{n})_{n\geq 0}
	\]
	is an isometry. We have for $(x_{n})_{n\geq 0}\in L_{p}(\Sigma(J(\mathcal M)),\varphi;\ell_{\infty}),(y_{n})_{n\geq 0}\in L_{p^\prime}(\Sigma(J(\mathcal M)),\varphi;\ell_{1})$,
	\begin{align*}
	\Big\langle\iota^{*}((b\Sigma^{-1}x_{n})_{n\geq 0}),(y_{n})_{n\geq 0}\Big\rangle & = \sum_{n\geq 0}\tau(b(\Sigma^{-1}x_{n})b^{p/p^\prime}(\Sigma^{-1}y_{n}))=\sum_{n\geq 0}\tau(b^{p}(\Sigma^{-1}x_{n})(\Sigma^{-1}y_{n})).
	\end{align*}
	For $x_n\in L_p(\mathcal N_1 )\oplus L_p(\mathcal N_2^{op} )$ and $y_n\in L_{p^\prime}(\mathcal N_1 )\oplus L_{p^\prime}(\mathcal N_2^{op} )$, using the traciality of $\tau$ and the property of $b$, we can obtain by arguing as before
	\begin{align*}
	\tau(b^{p}(\Sigma^{-1}x_{n})(\Sigma^{-1}y_{n}))= \tau(b^{p}(\Sigma^{-1}x_{n} y_{n})) = \varphi(x_{n}y_{n}).
	\end{align*}
	As before we have 
	\[ \Big\langle\iota^{*}((b\Sigma^{-1}x_{n})_{n\geq 0}),(y_{n})_{n\geq 0}\Big\rangle  =   \sum_{n\geq 0}\varphi(x_{n}y_{n})=\Big\langle(x_{n})_{n\geq 0},(y_{n})_{n\geq 0}\Big\rangle.\]
	Therefore, we have $\iota^{*}((b\Sigma^{-1}x_{n})_{n\geq 0})=(x_{n})_{n\geq 0}$.  Hence, by Lemma \ref{lem:extension lamperti} we see that by arguing as the case when $T$ was just a Jordan monomorphism,
	\begin{align*}
	\|(x_{n})_{n\geq 0}\|_{L_{p}(\Sigma(J(\mathcal M)),\varphi;\ell_{\infty})} & =   \|\iota^{*}((b\Sigma^{-1}x_{n})_{n\geq 0})\|_{L_{p}(\Sigma(J(\mathcal M)),\varphi;\ell_{\infty})}\leq\|(b\Sigma^{-1}x_{n})_{n\geq 0}\|_{L_{p}(\mathcal M,\tau;\ell_{\infty})}\\
	& =   \|(T\tilde{J}^{-1}x_{n})_{n\geq 0}\|_{L_{p}(\mathcal M,\tau;\ell_{\infty})}\leq\|(x_{n})_{n\geq 0}\|_{L_{p}(\Sigma(J(\mathcal M)),\varphi;\ell_{\infty})}.
	\end{align*}
	Therefore we have $\|(x_{n})_{n\geq 0}\|_{L_{p}(\Sigma(J(\mathcal M)),\varphi;\ell_{\infty})}=\|(b\Sigma^{-1}x_{n})_{n\geq 0}\|_{L_{p}(\mathcal M,\tau;\ell_{\infty})}$. This completes the proof of the proposition.
\end{proof}
Now Theorem \ref{DEEISO} follows from the noncommutative transference principle adapted from \cite[Theorem 3.1]{hongliaowang15erg} and Junge-Xu's maximal ergodic inequality \cite{jungexu07erg}. We provide details for the convenience of the readers.
\begin{proof}[Proof of Theorem \ref{DEEISO}]
	In this proof we fix an arbitrary positive integer $N\geq 1$. We write 
	$A_n   = \frac{1}{ n+1}\sum_{k=0}^nT^k$ and
	\[ A_{n}' : L_p ( \mathbb N_0 ; L_p(\mathcal{\mathcal{M}})) \to  L_p ( \mathbb N_0 ; L_p(\mathcal{\mathcal{M}})), \quad
	A_n' f (k)= \frac{1}{n+1} \sum_{l=0}^{n} f (l+k),\quad\forall k\in\mathbb{N}_0 . \]
	We consider $(A_{n}'f)_{0\leq n\leq N}\in L_{p}(\ell_{\infty}(\mathbb N_0)\overline{\otimes}\mathcal{\mathcal{M}};\ell_{\infty})$,
	and for any $\varepsilon>0$ we take a factorization $A_{n}'f=aF_{n}b$
	such that $a,b\in L_{2p}(\ell_{\infty}(\mathbb N_0)\overline{\otimes}\mathcal{\mathcal{M}})$,
	$F_{n}\in \ell_{\infty}(\mathbb N_0)\overline{\otimes}\mathcal{\mathcal{M}}$ and
	\[
	\|a\|_{2p}\sup_{0\leq n\leq N}\|F_{n}\|_{\infty}\|b\|_{2p}\leq\Big\|(A_{n}'f)_{0\leq n\leq N}\Big\|_{L_{p}(\ell_{\infty}(\mathbb N_0)\overline{\otimes}\mathcal{\mathcal{M}};\ell_{\infty})}+\varepsilon.
	\]
	Then we have
	\begin{align*}
	\sum_{k\geq 0}\Big\|\Big(A_{n}'f(k)\Big)_{0\leq n\leq N}\Big\|_{L_{p}({\mathcal{M}};\ell_{\infty})}^{p}  & \leq \sum_{k\geq 0} \|a(k)\|_{2p}^{p}\sup_{0\leq n\leq N}\|F_{n}(k)\|_{\infty}^{p}\|b(k)\|_{2p}^{p} \\
	& \leq\|a\|_{2p}^{p}\sup_{0\leq n\leq N}\|F_{n}\|_{\infty}^{p}\|b\|_{2p}^{p}\leq\left(\Big\|(A_{n}'f)_{0\leq n\leq N}\Big\|_{L_{p}(\ell_{\infty}(\mathbb N_0)\bar{\otimes}\mathcal{\mathcal{M}};\ell_{\infty})}+\varepsilon\right)^{p}.
	\end{align*}
	Since $\varepsilon$ is arbitrarily chosen, we obtain
	\begin{equation}\label{eq:transf}
	\sum_{k\geq 0}\Big\|\Big(A_{n}'f(k)\Big)_{0\leq n\leq N}\Big\|_{L_{p}({\mathcal{M}};\ell_{\infty})}^{p}\leq\Big\|\Big(A_{n}'f\Big)_{0\leq n\leq N}\Big\|_{L_{p}(\ell_{\infty}(\mathbb N_0)\overline{\otimes}\mathcal{\mathcal{M}};\ell_{\infty})}^{p}.
	\end{equation}
	Fix  $x\in L_p (\mathcal M) $. We define a $L_{p}(\mathcal{\mathcal{M}})$-valued function $f_m$ on $\mathbb N_0$ 
	as 
	\[
	f_m(l)= T^l x ,\quad \text{if }l\leq m+N ; \quad f_m(l)= 0  \, \text{ otherwise}.
	\]
	Then for all $0\leq k\leq m$ and $0\leq n\leq N$,
	\begin{equation*}
	T^k A_n x= \frac{1}{n+1} \sum_{l=0}^{n}T^{k+l}x = \frac{1}{n+1} \sum_{l=0}^{n} f_m (l+k) = A_n ' f_m (k).\label{eq:transf alpha_g An}
	\end{equation*}
	Note that the previous proposition yields that for all $0 \leq k \leq m$, we have 
	\[\Big\|\Big( A_n x\Big)_{0\leq n\leq N}\Big\|_{L_{p}({\mathcal{M}};\ell_{\infty})} = \Big\|\Big( T^k A_n x\Big)_{0\leq n\leq N}\Big\|_{L_{p}({\mathcal{M}};\ell_{\infty})}=\Big\|\Big( A_n ' f_m (k)\Big)_{0\leq n\leq N}\Big\|_{L_{p}({\mathcal{M}};\ell_{\infty})},\]
	and hence for any $m\geq 0$,
	\[\Big\|\Big( A_n x\Big)_{0\leq n\leq N}\Big\|_{L_{p}({\mathcal{M}};\ell_{\infty})}^p = \frac{1}{m+1}\sum_{k=0}^{m} \Big\|\Big( A_n ' f_m (k)\Big)_{0\leq n\leq N}\Big\|_{L_{p}({\mathcal{M}};\ell_{\infty})}^p
	\leq\frac{1}{m+1}\Big\|\Big(A_{n}'f_m\Big)_{0\leq n\leq N}\Big\|_{L_{p}(\ell_{\infty}(\mathbb N_0)\overline{\otimes}\mathcal{\mathcal{M}};\ell_{\infty})}^{p} .\]
	Recall that by \cite{jungexu07erg},
	\[ \Big\|\Big(A_{n}'f_m\Big)_{0\leq n\leq N}\Big\|_{L_{p}(\ell_{\infty}(\mathbb N_0)\overline{\otimes}\mathcal{\mathcal{M}};\ell_{\infty})} \leq C_p \|f_m \|_p\] for a constant $C_p$ depending only on $p$ since $f\mapsto f(\cdot +1)$ is a Dunford-Schwartz operator on $\ell_{\infty}(\mathbb N_0)\overline{\otimes}\mathcal{\mathcal{M}}$.
	Thus together with \eqref{eq:transf} we see that
	\begin{equation*}
	\Big\|\Big( A_n x\Big)_{0\leq n\leq N}\Big\|_{L_{p}({\mathcal{M}};\ell_{\infty})}^p \leq\frac{C_p ^p}{m+1}\|f_m\|_{p}^{p}
	=\frac{C_p ^p}{m+1} \sum_{l=0}^{m+N}\|f_m (l)\|_{p}^{p} 
	=\frac{C_p ^p}{m+1} \sum_{l=0}^{m+N}\|T^l x\|_{p}^{p}
	=\frac{C_p ^p (m+N+1)}{m+1}  \| x\|_{p}^{p}.
	\end{equation*}
	Since $m$ is arbitrarily chosen,  
	we get
	\[
	\Big\|\Big( A_n x\Big)_{0\leq n\leq N}\Big\|_{L_{p}({\mathcal{M}};\ell_{\infty})} \leq C_p \|x\|_{p}.
	\]
	This completes the proof of the theorem by using Proposition \ref{FINT}.
\end{proof}

Based on the maximal ergodic theorem for isometries and the dilation theorem, now we can conclude the proof of Theorem \ref{main}, that is, the maximal ergodic theorem for contractions in $\overline{\operatorname{conv}}^{sot}(\mathbb{SS}^+(L_p(\mathcal{M})))$.

\begin{proof}[Proof of Theorem \ref{main}]We write  
	$A_n  (T) = \frac{1}{ n+1}\sum_{k=0}^nT^k$. Fix an arbitrary $N\geq 1 $. Take a sequence  $(T_j)\subseteq \operatorname{conv} (\mathbb{SS}^+(L_p(\mathcal{M})))$ so that $T_j$ converges to $T$ strongly. By Corollary \ref{CO4.3}, there exist positive contractions $Q_{N,j},J_{N,j}$ and positive isometries $U_{N,j}$ such that we have  $T_j^n=Q_{N,j}U_{N,j}^nJ_{N,j}$ for all $0\leq n\leq N.$ Therefore, as each $U_{N,j}$ admits a maximal ergodic inequality with constant $C_p$ by Theorem \ref{DEEISO} and $Q_{N,j},j_{N,j}$ extend to contractions on $L_p(\mathcal M;\ell_\infty^{N+1})$ (see e.g. \cite{jungexu07erg}), we have
	\[\|(A_n(T_j)x)_{n=0}^N\|_{L_p(\mathcal M;\ell_\infty^{N+1})} \leq \|(A_n(U_{N,j})x)_{n=0}^N\|_{L_p(\mathcal M;\ell_\infty^{N+1})}  \leq C_p \|x\|_p,\quad
	x\in L_p(\mathcal M).\]
	Then for any $x\in L_p(\mathcal M),$ and $N\geq 1$ we have
	\begin{align*}
	&\|(A_n(T)x)_{n=0}^N\|_{L_p(\mathcal M;\ell_\infty^{N+1})}\\
	\leq\ & \|(A_n(T_j)x)_{n=0}^N\|_{L_p(\mathcal M;\ell_\infty^{N+1})}+\|(A_n(T)x-A_n(T_j)x)_{n=0}^N\|_{L_p(\mathcal M;\ell_\infty^{N+1})}\\
	\leq\ & C_p\|x\|_{L_p(\mathcal M)}+\sum_{n=0}^N\|A_n(T)x-A_n(T_j)x\|_{L_p(\mathcal M)}.
	\end{align*}
	The result follows by taking $j\to\infty$ and using Proposition \ref{FINT}.	
\end{proof}
As mentioned previously, for $\mathcal M = L_\infty([0,1])$, our result recovers Ackoglu's ergodic theorem. In the following we remark that we may also obtain the general operator-valued version of Ackoglu's theorem.
\begin{cor}\label{cbackoglu}
	Let $1<p<\infty$ and $(\Omega,\mu)$ be a $\sigma$-finite measure space. Then for any positive contraction $T:L_p(\Omega)\to L_p(\Omega)$ and any semifinite von Neumann algebra $\mathcal M$, we have
	\[\Big\|\Big( \frac{1}{ n+1}\sum_{k=0}^n(T \otimes I_{L_p (\mathcal M ) } )^k x\Big)_{n\geq 0} \Big\|_p\leq C_p\|x\|_p,\quad \forall\, x\in L_p(L_\infty(\Omega)\overline{\otimes} \mathcal M).\]
\end{cor}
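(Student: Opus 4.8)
The plan is to deduce this from the main Theorem \ref{main} by showing that the operator $T\otimes I_{L_p(\mathcal M)}$ lies in the class $\overline{\operatorname{conv}}^{sot}(\mathbb{SS}^+(L_p(L_\infty(\Omega)\overline{\otimes}\mathcal M)))$ built over the semifinite von Neumann algebra $L_\infty(\Omega)\overline{\otimes}\mathcal M$ equipped with $\mu\otimes\tau$. Recall first that by \cite[Theorem 2]{grzp90approxpositive} together with \cite{facklergluck19dilation} (and a standard reduction to handle an arbitrary $\sigma$-finite $\Omega$), the given positive contraction $T$ already belongs to $\overline{\operatorname{conv}}^{sot}(\mathbb{SS}^+(L_p(\Omega)))$. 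Thus it suffices to transport this membership across the assignment $\Psi\colon S\mapsto S\otimes I_{L_p(\mathcal M)}$ and then invoke Theorem \ref{main} on the noncommutative space.

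The key step is to verify that $\Psi$ maps a positive Lamperti contraction $S$ on $L_p(\Omega)$ to a positive Lamperti contraction on $L_p(L_\infty(\Omega)\overline{\otimes}\mathcal M)$. Here I would use the explicit description recalled in Remark \ref{arokoto}(2): such an $S$ has the form $Sf=h\cdot\Phi(f)$ with $h\in L_\infty(\Omega)_+$ and $\Phi$ the $*$-homomorphism of $L_\infty(\Omega)$ induced by a nonsingular point transformation. Indeed, since $L_\infty(\Omega)$ is abelian the Jordan homomorphism produced by Theorem \ref{charac} is automatically multiplicative, so $S$ is even completely Lamperti. Consequently $\Psi(S)=M_h\circ(\Phi\otimes\operatorname{id}_{\mathcal M})$, where $\Phi\otimes\operatorname{id}_{\mathcal M}$ is a normal $*$-homomorphism of $L_\infty(\Omega)\overline{\otimes}\mathcal M$ and $h$ is a positive central element. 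This is exactly the structural form $wbJ$ of Theorem \ref{charac} with $b=h$, $w=s(h)$ and the \emph{multiplicative} $J=\Phi\otimes\operatorname{id}_{\mathcal M}$; hence by Remark \ref{kotokikorarchilobaki} (together with Theorem \ref{scharac}) $\Psi(S)$ extends to a positive completely Lamperti operator. Its contractivity follows from condition (iv) of Theorem \ref{charac}: testing $(\mu\otimes\tau)(h^p(\Phi\otimes\operatorname{id}_{\mathcal M})(g\otimes y))=\mu(h^p\Phi(g))\,\tau(y)\leq\mu(g)\,\tau(y)$ on elementary tensors $g\otimes y\in(L_\infty(\Omega)\overline{\otimes}\mathcal M)_+$ and extending by normality shows that the constant $C$ may be taken equal to $1$.

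Finally, $\Psi$ is affine and, because every $\Psi(S)$ is a contraction, it is continuous from the strong operator topology on bounded subsets of $B(L_p(\Omega))$ into the strong operator topology on $B(L_p(L_\infty(\Omega)\overline{\otimes}\mathcal M))$; this is immediate on elementary tensors $f\otimes\xi$ and then passes to all of $L_p(\Omega;L_p(\mathcal M))$ by density and the uniform bound. Therefore $\Psi$ maps $\operatorname{conv}(\mathbb{SS}^+(L_p(\Omega)))$ into $\operatorname{conv}(\mathbb{SS}^+(L_p(L_\infty(\Omega)\overline{\otimes}\mathcal M)))$ and, on passing to $sot$-limits, maps the strong closure into the strong closure. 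In particular $T\otimes I_{L_p(\mathcal M)}=\Psi(T)\in\overline{\operatorname{conv}}^{sot}(\mathbb{SS}^+(L_p(L_\infty(\Omega)\overline{\otimes}\mathcal M)))$, and Theorem \ref{main} yields the desired maximal inequality with constant $C_p$, noting that $(T\otimes I_{L_p(\mathcal M)})^k=T^k\otimes I_{L_p(\mathcal M)}$.

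I expect the main obstacle to be the verification that $\Psi(S)$ remains Lamperti on the \emph{noncommutative} space: tensoring with the identity of a noncommutative factor does not in general preserve support separation, and it is crucial here that a classical Lamperti contraction is automatically completely Lamperti (equivalently, that its associated Jordan homomorphism is multiplicative by commutativity of $L_\infty(\Omega)$). A secondary technical point is to ensure that the density statement of \cite{grzp90approxpositive,facklergluck19dilation}, stated in the introduction for $[0,1]$, is available for a general $\sigma$-finite measure space $\Omega$.
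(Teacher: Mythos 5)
Your Step 2 is correct and is in fact the same tensorization that the paper performs: a classical positive Lamperti contraction $S=h\cdot\Phi$ has multiplicative $J$ by commutativity, so $S\otimes I_{L_p(\mathcal M)}$ is of the form $wbJ$ with $b=h\otimes 1$, $J=\Phi\otimes\operatorname{id}_{\mathcal M}$ multiplicative, and Remark \ref{kotokikorarchilobaki} applies; the affinity and sot-continuity of $\Psi$ on contractions in Step 3 are also fine. The genuine gap is Step 1. The density
\[
\{S:L_p(\Omega)\to L_p(\Omega) \text{ positive contraction}\}=\overline{\operatorname{conv}}^{\,sot}\,\mathbb{SS}^+(L_p(\Omega))
\]
is \emph{false} for a general $\sigma$-finite $\Omega$: it fails as soon as $\Omega$ contains two atoms, so no ``standard reduction'' from $[0,1]$ can exist. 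Concretely, on $\ell_p^2$ take $T(x_1,x_2)=2^{-1/p'}(x_1+x_2)\,e_1$; this is a positive contraction (H\"older), but for every positive Lamperti contraction $S$ the elements $Se_1$ and $Se_2$ have disjoint supports, so at most one of the nonnegative numbers $(Se_1)(1),(Se_2)(1)$ is nonzero and each is at most $\|Se_i\|_p\leq 1$, whence $(Se_1)(1)+(Se_2)(1)\leq 1$. This inequality is affine and sot-continuous in $S$, so it persists on $\overline{\operatorname{conv}}^{\,sot}\,\mathbb{SS}^+(\ell_p^2)$; yet for $T$ the left-hand side equals $2\cdot 2^{-1/p'}=2^{1/p}>1$. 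Grz\k{a}\'slewicz's approximation theorem genuinely requires nonatomicity, which is why the paper states the density only for $L_p([0,1])$.

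The paper's proof bypasses the density issue entirely: it first invokes Akcoglu's dilation theorem, which is valid over \emph{arbitrary} measure spaces, to write $T^k=QU^kJ$ with $U$ a positive invertible isometry on some classical $L_p(\Omega')$ and $Q,J$ positive contractions. Such a $U$ is automatically completely Lamperti (Remarks \ref{arokoto} and \ref{isothmm1}), so $U\otimes I_{L_p(\mathcal M)}$ is a positive Lamperti contraction to which Theorem \ref{main} applies directly, and the maximal inequality transfers back through $Q\otimes I_{L_p(\mathcal M)}$ and $J\otimes I_{L_p(\mathcal M)}$, which are positive contractions and hence bounded on the $\ell_\infty$-valued spaces. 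In other words, only the isometric dilation ever gets tensorized, and no approximation of $T$ itself is needed. Your argument could be repaired along similar lines, e.g.\ by composing with a positive isometric lattice embedding $i:L_p(\Omega)\to L_p(\widetilde\Omega)$ into a nonatomic space admitting a positive contractive projection $E$ with $Ei=\operatorname{id}$, and applying your scheme to $\widetilde T=iTE$ (note $\widetilde T^k=iT^kE$); but as written, the claim that $T\in\overline{\operatorname{conv}}^{\,sot}\,\mathbb{SS}^+(L_p(\Omega))$ for every $\sigma$-finite $\Omega$ is where the proof breaks.
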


\begin{rem}
Note that $T \otimes I_{L_p (\mathcal M )}$ is just a completely positive contraction, and no existing theory can be applied immediately to conclude the ergodic maximal inequality. Instead, we exploit Ackoglu's dilation theorem and Theorem \ref{main} for Lamperti operators.
\end{rem}

\begin{proof}
	By Ackoglu's dilation theorem \cite{akcoglu75erg,akcoglusucheston77dilation}, we may write $T^k= Q U^k J$ for all $k\geq 1$, where $J:L_p(\Omega)\to L_p(\Omega')$ and $Q:L_p(\Omega')\to L_p(\Omega)$ are positive contractions, and $U:L_p(\Omega')\to L_p(\Omega')$ is a positive invertible isometry, and $\Omega'$ is a certain measure space. Also, $U$ is positive and Lamperti by Remark \ref{arokoto} and Remark \ref{isothmm1}, and consequently a completely positive and completely Lamperti isometry by Remark \ref{arokoto}. Therefore by Theorem \ref{main}, we have
	\[\Big\|\Big(\frac{1}{ n+1}\sum_{k=0}^n(U \otimes I_{L_p (\mathcal M ) } )^k x \Big)_{n\geq 0}\Big\|_p\leq C_p\|x\|_p,\quad \forall\, x\in L_p(L_\infty(\Omega')\overline{\otimes} \mathcal M).\]
	Note that $T \otimes I_{L_p (\mathcal M ) }$, $J \otimes I_{L_p (\mathcal M ) }$ and $Q \otimes I_{L_p (\mathcal M )} $ are again positive contractions (see for instance \cite[Theorem 2.17 and Proposition 2.21]{arhancetkriegler17decomp} and \cite{junge04fubini}). Thus the proof is complete.
\end{proof}

\section{Ergodic theorem for power bounded doubly Lamperti operators}\label{lamp}

This section is devoted to the proof of our main result, i.e., Theorem \ref{main1}.
Our key ingredient is Theorem \ref{CHARACDL}, which is a technical structural theorem for the \emph{doubly Lamperti} operators (i.e. a Lamperti operator whose adjoint is also Lamperti). The proof is quite lengthy compared to that of the classical one.  We start off with a refined study of the structure of Lamperti operators.

To this end we fix some notation. Let $1\leq p<\infty$ and $T:L_p(\mathcal  M)\to L_p(\mathcal  M)$ be a   positive   Lamperti contraction with the representation $T(x)=bJ(x) $ for $x\in \mathcal{S}(\mathcal  M) $ given in Theorem \ref{charac}. Recall that by Lemma \ref{lem:d} there exists a positive operator $0\leq \rho_T \leq 1$ with $\rho_T \in \mathcal{Z}(\mathcal  M)$ such that \begin{equation}\label{repp}\|T(x)\|_p^p=\tau(\rho_T x^p)=\tau(b^pJ(x^p))\end{equation} for all $x\in \mathcal  M _+$. Denote by $p_0 \in \mathcal{Z}(\mathcal  M)$ the projection onto  $\ker \rho_T $ (in other words we set $p_0 = 1 - s(\rho_T)$)  and write $p_1= p_0^{\perp}=s(\rho_T).$ Also take $\tilde p _0$ to be projection onto  $\ker (1-\rho_T) $ or equivalently $\tilde p _0=1- s(1-\rho_T)$. Throughout the rest of this paper, we maintain the notation introduced here.
\begin{prop}\label{properties}Let $1\leq p<\infty$ and $T:L_p(\mathcal  M)\to L_p(\mathcal  M)$ be a   positive   Lamperti contraction. Then the following statements hold.
	\begin{enumerate}
		\item $T\big{|}_{L_p(p_0\mathcal  M p_0)}=0$ and $T\big{|}_{L_p(\tilde p _0\mathcal  M \tilde p _0)}$ is an isometry;
	
		\item The following statements are equivalent:
		\begin{enumerate}
			\item $T$ is injective;
			\item $p_0=0;$
			\item $J$ is injective.
		\end{enumerate}
		\item Suppose that $T$ is surjective. Then we have
		\begin{enumerate}
			\item $J$ is surjective and $s(b)=J(1)=1,$ moreover $T$ and $J$ are injective on $L_p(p_1 \mathcal M p_1)$ and $p_1 \mathcal M p_1$ respectively;
		\item for some constant $C> 0,$ $p_1\rho_T \geq Cp_1.$
		\end{enumerate}
	\end{enumerate}
\end{prop}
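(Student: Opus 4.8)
The plan is to derive everything from the two identities in Lemma~\ref{lem:d}, namely $\|T(x)\|_p^p=\tau(\rho_T|x|^p)$ for $x\in\mathcal S(\mathcal M)$ together with the representation $T=bJ$, and to exploit that $\rho_T$ is \emph{central} so that $p_0,p_1,\tilde p_0$ are mutually compatible spectral projections of $\rho_T$. For assertion (1), I would first note that for $x\in\mathcal S(p_0\mathcal M p_0)$ the support of $|x|^p$ lies under $p_0=1-s(\rho_T)$, which is orthogonal to $s(\rho_T)$; hence $\rho_T|x|^p=0$ and $\|T(x)\|_p=0$, so $T$ vanishes on $L_p(p_0\mathcal M p_0)$ after extending by density. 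For the isometry statement I use $\rho_T\tilde p_0=\tilde p_0$ (since $\tilde p_0=1-s(1-\rho_T)$ projects onto $\ker(1-\rho_T)$), whence $\tau(\rho_T|x|^p)=\tau(|x|^p)$ for $x$ supported under $\tilde p_0$.

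For assertion (2) the clean pivot is that, for a projection $e$, one has $T(e)=0\iff\tau(\rho_T e)=0\iff\rho_T^{1/2}e=0\iff e\le p_0$, using faithfulness of $\tau$ and centrality of $\rho_T$; and separately $T(e)=bJ(e)=0\iff J(e)=0$, because $b$ and $J(e)$ are commuting positive operators with $s(b)=J(1)\ge s(J(e))$. Combining gives $T(e)=0\iff J(e)=0\iff e\le p_0$. Then (a)$\Rightarrow$(b) follows by choosing a nonzero $\tau$-finite $e\le p_0$; (b)$\Rightarrow$(a) follows since $p_0=0$ forces $\rho_T^{1/2}|x|^{p/2}=0$, hence $x=0$, whenever $T(x)=0$; and (b)$\iff$(c) follows because the kernel of the normal Jordan homomorphism $J$ is the central ideal $p_0\mathcal M$.

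For assertion (3), assume $T$ is onto. Since $b=J(1)bJ(1)$ and $J(x)=J(1)J(x)J(1)$ give $T(x)=J(1)\,T(x)\,J(1)$, the range of $T$ lies in $J(1)L_p(\mathcal M)J(1)$, so surjectivity forces $J(1)=s(b)=1$; the injectivity of $T$ and $J$ on the $p_1$-corner is then immediate from the computation in (2), as $e\le p_1$ and $e\le p_0$ force $e=0$. To obtain surjectivity of $J$, I would first show every $y\in L_p(\mathcal M)_+$ has a \emph{positive} preimage: writing $y=T(x)$, reducing to self-adjoint $x$ (as $T$ is $*$-preserving) and splitting $x=x^+-x^-$, the Lamperti property (Proposition~\ref{NOTS}) makes $T(x^+),T(x^-)$ orthogonal positive elements, so $y\ge 0$ forces $T(x^-)=0$ and $y=T(x^+)$. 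Applying this to a $\tau$-finite projection $q$ and using $s(T(x^+))=J(s(x^+))$ (valid because $b$ commutes with $J(x^+)$ and $s(b)=1$) yields $q=J(s(x^+))\in J(\mathcal M)$; since $J(\mathcal M)$ is $w^*$-closed and contains every $\tau$-finite projection, it contains all projections and hence equals $\mathcal M$. Finally, for (3)(b) I would observe that $\ker T=L_p(p_0\mathcal M p_0)$ is complemented by the central corner $L_p(p_1\mathcal M p_1)$, so $T|_{L_p(p_1\mathcal M p_1)}$ is a bounded bijection onto $L_p(\mathcal M)$ and is therefore bounded below by the open mapping theorem: $\tau(\rho_T|x|^p)\ge c^p\tau(|x|^p)$ on this corner. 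Testing against a $\tau$-finite subprojection of $\chi_{(0,\delta)}(\rho_T)\le p_1$ gives $c^p\le\delta$, so this spectral projection vanishes for $\delta<c^p$, i.e. $p_1\rho_T=\rho_T\ge c^p p_1$.

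The main obstacle is assertion (3). Its two genuinely non-routine ingredients are the positive-preimage argument, which is exactly what upgrades surjectivity of $T$ to surjectivity of the Jordan homomorphism $J$ and which relies essentially on the Lamperti/positivity structure rather than on abstract surjectivity, and the passage from the open-mapping lower bound to a \emph{uniform} spectral lower bound on $\rho_T$. By contrast, assertions (1) and (2) are essentially bookkeeping around the identity $\|T(x)\|_p^p=\tau(\rho_T|x|^p)$ and the centrality of $\rho_T$.
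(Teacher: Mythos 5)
Your handling of assertions (1) and (2) is correct and matches the paper's in substance: both rest on the identity $\|T(x)\|_p^p=\tau(\rho_T|x|^p)$, faithfulness of $\tau$, and centrality of $\rho_T$. Your organization of (2) (the pivot $T(e)=0\Leftrightarrow e\le p_0\Leftrightarrow J(e)=0$ for $\tau$-finite projections, plus the norm identity for (b)$\Rightarrow$(a)) is sound; the asserted fact $\ker J=p_0\mathcal M$ is left unproved, but both inclusions are short faithfulness computations using $\tau(b^pJ(x))=\tau(\rho_T x)$ and $J(x)=J(1)J(x)J(1)$, so that compression is harmless. In (3) you actually improve on the paper at two points: you obtain $J(1)=s(b)=1$ directly from $\operatorname{ran} T\subseteq J(1)L_p(\mathcal M)J(1)$, whereas the paper can only deduce this \emph{after} proving $J$ surjective; and your positive-preimage reduction (orthogonality of $T(x^+)$ and $T(x^-)$ via Proposition \ref{NOTS} forces $y=T(x^+)$) spells out correctly what the paper leaves implicit. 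Your derivation of (3)(b) from the open mapping theorem and spectral testing is also essentially the paper's argument.

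There is, however, a genuine gap at the crux of (3)(a), the surjectivity of $J$. You justify $s(T(x^+))=J(s(x^+))$ ``because $b$ commutes with $J(x^+)$ and $s(b)=1$.'' But $x^+$ is merely a positive element of $L_p(\mathcal M)$ --- it is a $T$-preimage of a projection, and nothing makes it bounded --- while $J$ is defined only on $\mathcal M$, and the factorization $T=bJ$ holds only on $\mathcal S(\mathcal M)$. So $J(x^+)$ is not a defined object, and your justification collapses exactly where the real work lies. (Ironically, you flag the positive-preimage reduction as the non-routine ingredient; that part of your argument is fine, and the non-routine ingredient is precisely the support identity you dispose of in a parenthesis.) The statement is true, but proving it requires the approximation that constitutes the core of the paper's proof: take $(x_n)\subseteq\mathcal S(\mathcal M)_+$ with $x_n\le x^+$, $\|x_n-x^+\|_p\to 0$ and $s(x_n)\uparrow s(x^+)$ as in Lemma \ref{1.10}; for these \emph{bounded} elements one has $s(T(x_n))=s(b)\wedge J(s(x_n))=J(s(x_n))$; since $T(x_n)\le T(x^+)=q$, the projections $s(T(x_n))$ increase below $q$, and setting $P=\vee_n s(T(x_n))$ the identity $(q-P)(q-T(x_n))=q-P$ together with $\|q-T(x_n)\|_p\to 0$ forces $q=P$; finally normality of $J$ gives $q=\vee_n J(s(x_n))=J(s(x^+))$. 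Without this step the surjectivity of $J$, and hence all of (3)(a) beyond $J(1)=1$, is not established. Note also that the obvious patch of applying $J$ to unbounded operators via Lemma \ref{extension} is unavailable here: that lemma is proved only for finite von Neumann algebras, while the proposition concerns general semifinite $\mathcal M$.
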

\begin{proof}
	(i) Note that for any $x\in \mathcal{S}(\mathcal  M)_{+},$ by \eqref{repp} we see that \[\|T(p_0xp_0)\|_p^p=\tau(\rho_T p_0(p_0xp_0)^pp_0)=0.\] Therefore, we have $T(p_0xp_0)=0.$ This shows that $L_p(p_0\mathcal  M p_0)\subseteq\ker T.$ 
	
	On the other hand, for any $x\in\mathcal{S}(\mathcal  M),$ we have $(1-\rho_T )\tilde p _0|\tilde p _0 x\tilde p _0|^p =0.$ Therefore, we obtain $\rho_T  |\tilde p _0 x\tilde p _0|^p = |\tilde p _0 x\tilde p _0 |^p .$ By using \eqref{repp}, this shows that $T\big{|}_{L_p(\tilde p _0 \mathcal M \tilde p _0 )}$ is an isometry. 
	
	(ii) By (i), $L_p(p_0\mathcal  M p_0)\subseteq\ker T$,  so it is clear that (a) implies (b).
	
	Recall that   $\tau(b^pJ(x))=\tau(\rho_T x)$ for $x\in \mathcal M_{+}$ by \eqref{repp}. If $J(x)=0$ for some nonzero $x \in\mathcal M $, then $J(|x|)=0$ and hence $\tau(\rho_T |x|)=0$. By the faithfulness of $\tau$ we obtain $\rho_T^{1/2} |x| \rho_T ^{1/2}=0$. Hence $p_1 |x| p_1 =0$, which means that $p_1 \neq 1$ and $p_0\neq 0$. Thus (b) implies (c).
	
	To see that (c) implies (a), we suppose that $T(y)=0$ for some $y\in L_p(\mathcal M).$ By the decomposition $T(y)=bJ(y)$, we see that $T(y^*)=T(y)^*=0.$    Thus $T(\operatorname{Re}\,y)=T(\operatorname{Im}\,y)=0$, where $\operatorname{Re}\,y$ and $\operatorname{Im}\,y$ denote the real and imaginary part of $y$ respectively. By Lemma \ref{NOTS}  we see that $T(|\operatorname{Re}\,y |)=T(|\operatorname{Im}\,y |)=0$. Write $x=|\operatorname{Re}\,y|$ and take a positive sequence $(x_n)_{n\geq 1}\subseteq \mathcal S (\mathcal M)_{+}$ as in Lemma \ref{1.10}. Since $T$ is positive and $x_n\leq x,$ we have $ T(x_n )\leq 0.$ Thus $T (x_n) =0$ for all $n\geq 1.$ Since $s(x_n)\uparrow s(x),$ we have $J(s(x_n))\uparrow J(s(x))$ by the normality of $J.$ Note that by the construction of $J$ we have  \begin{equation}\label{eq:suppt}
	s(T (x_n) )= s(bJ (x_n) ) 
	=s(b)\wedge s (J(x_n))
	=J(1) \wedge  J(s(x_n))  =J(s(x_n))=0
	\end{equation}  for all $n\geq 1,$ where the second equality follows from the fact that spectral projections of $b$ commute with $J (x_n) $ and the third equality follows from the fact that $J (e)\leq J(1)$ for any projection $e\in\mathcal M$ as $J$ is positive. Thus, we have $J(s(x))=0.$ Since $J$ is injective, this means $s(x)=0.$ Therefore, $x=|\operatorname{Re}\,y| =0.$ Similarly $|\operatorname{Im}\,y| =0$ and hence $y=0$.
	
	(iii) We first prove the surjectivity of $J$. Note that by the surjectivity of $T$, for any $\tau$-finite projection $e$ there exists some $x\in L_p(\mathcal  M)$ with $T(x)=e$. As in the proof of (c)$\Rightarrow$(a) in (ii), it suffices to consider the case where $T(x)=e$ for some positive $x.$ Take a sequence $(x_n)_{n\geq 1}$ as in Lemma \ref{1.10}.  We claim that $s(T(x_n))\uparrow e$. Indeed, since $T$ is positive and $x_n\leq x,$ we have $Tx_n\leq e$ for all $n\geq 1.$ Therefore, $Tx_n$ is bounded for each $n\geq 1.$ Note that $s(Tx_n)\leq e.$ Now \[(e-\vee_{n\geq 1}s(Tx_n))(e-Tx_n)= e-\vee_{n\geq 1}s(Tx_n).\] 
	Therefore, we have  
	\begin{align*}
	\|e-\vee_{n\geq 1}s(Tx_n)\|_p&\leq\|e-\vee_{n\geq 1}s(Tx_n)\|_\infty\|e-Tx_n\|_p \\
	&\leq 2\|e-Tx_n\|_p\to 0, \quad \text{as}\  n\to\infty.	\end{align*}
	This implies that $e-\vee_{n\geq 1}s(Tx_n) =0$.  So we obtain our claim. We have $J(s(x_n))\uparrow J(s(x))$ by the normality of $J$ and $s(Tx_n)=J(s(x_n))$ for all $n\geq 1 $ as in \eqref{eq:suppt}. Thus $J(s(x))=e.$ Since the span of $\tau$-finite projections is $\operatorname{w}^*$-dense in $\mathcal M,$ we see that $J(\mathcal M)$ is $\operatorname{w}^*$-dense in $\mathcal M$. Thus $J(\mathcal M)=\mathcal M.$
	
	Clearly, we have that $J(1)\leq 1.$ Therefore, by surjectivity there exists  $x\in \mathcal M$ such that $J(x)=1-J(1).$ Then  $J(x)=J(1)J(x)=J(1)(1-J(1))=0.$ Thus $s(b)=J(1)=1.$

	Now we prove that $T$ is injective on $L_p(p_1\mathcal  M p_1)$. First, note that the operator $T\big{|}_{L_p(p_1\mathcal M p_1)}$ also separates supports and has the representation $p_1xp_1\mapsto J(p_1)bJ(p_1)J(p_1xp_1).$ Therefore, by (ii), it is enough to show that the map $p_1xp_1\to J(p_1xp_1)$ is injective. Now if $J(p_1xp_1)=0$ for some positive $x$, then by \eqref{repp}, $\tau(\rho_T p_1xp_1)=0.$ Recall that $p_1=s(\rho_T)$. By the faithfulness of $\tau$ we obtain that $(\rho_T)^{1/2} x (\rho_T)^{1/2}=0 $ and $p_1xp_1=0.$ Note that the equality $\|T(x)\|_p^p=\tau(\rho_T |x|^p)$ can be extended by density to all $x\in L_p (\mathcal M)$ by \eqref{repp}. So by a similar argument we see that $T\big{|}_{L_p(p_1\mathcal  M p_1)}$ is also injective and $\ker T = L_p(p_0\mathcal  M p_0)$.
	
	Since $T\big{|}_{L_p(p_1\mathcal  M p_1)}$ is bounded,   so is $T\big{|}_{L_p(p_1\mathcal  M p_1)}^{-1}$ by the open mapping theorem. So we may find some constant $C>0 $ such that for all $x\in \mathcal{S}(\mathcal{M})_{+},$ \[\|T(p_1xp_1)\|_p\geq C\|p_1xp_1\|_p.\] This implies that $\tau(\rho_T p_1xp_1)\geq C\tau(p_1xp_1)$ for all $x\in \mathcal{S}(\mathcal{M})_{+}.$ In particular  $p_1\rho_T \geq Cp_1 $, as desired. 
\end{proof}

The following lemma is elementary. We include here for the convenience of the reader.
\begin{lem}\label{proless}
	Let $p,q\in   \mathcal M  $ be two projections with $pqp=p.$ Then we have $p\leq q.$
\end{lem}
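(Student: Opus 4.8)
The plan is to reduce the order relation $p \le q$ to the algebraic identity $qp = p$, and to establish the latter by showing that a single auxiliary element vanishes. First I would recall that for two projections in a von Neumann algebra, the relation $p \le q$ (meaning the range of $p$ is contained in the range of $q$) is equivalent to $qp = p$, and equivalently to $pq = p$. Hence it suffices to prove $qp = p$, after which taking adjoints and using that $p$ and $q$ are self-adjoint gives $pq = p$ for free.

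The key step is to consider the auxiliary element $a := (1-q)p$, where $1-q$ is the complementary projection. Using that $p$, $q$, and therefore $1-q$ are self-adjoint idempotents, I would compute
\[
a^*a = p\,(1-q)^*(1-q)\,p = p\,(1-q)\,p = p - pqp.
\]
By the hypothesis $pqp = p$, this gives $a^*a = p - p = 0$. The C*-identity $\|a\|^2 = \|a^*a\| = 0$ then forces $a = 0$, that is, $(1-q)p = 0$, which is exactly $qp = p$. Taking adjoints yields $pq = (qp)^* = p^* = p$, so $pq = qp = p$ and consequently $p \le q$.

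There is essentially no serious obstacle here: the statement is elementary and purely algebraic, valid in any C*-algebra. The only points worth highlighting are the judicious choice of the auxiliary element $(1-q)p$, which converts the hypothesis $pqp = p$ into a vanishing $a^*a$, and the use of the C*-identity to pass from $a^*a = 0$ to $a = 0$.
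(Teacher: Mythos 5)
Your proof is correct. Setting $a=(1-q)p$ and computing $a^*a=p(1-q)p=p-pqp=0$, then invoking the C*-identity to get $a=0$, is a valid and clean argument; the final step $qp=p\Rightarrow p\leq q$ is the standard equivalence (e.g.\ $qp=pq=p$ makes $q-p$ a self-adjoint idempotent, hence a projection, hence positive).

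The paper takes a slightly different, though equally elementary, route: it decomposes $q$ into its corners with respect to $p$, writing $q=x+y+y^*+z$ with $x=pqp$, $y=pq(1-p)$, $z=(1-p)q(1-p)$, and then uses $q=q^2$ together with the hypothesis $x=p$ to obtain $x=pq^2p=x+yy^*$, forcing $y=0$ and yielding $q-p=z\geq 0$ directly. Both arguments hinge on the same mechanism (a positive element of the form $b^*b$ or $bb^*$ vanishes, hence $b=0$), but they kill different off-diagonal elements: you annihilate $(1-q)p$, the paper annihilates $pq(1-p)$. What the paper's version buys is that it exhibits $q-p$ explicitly as the positive corner $(1-p)q(1-p)$, so positivity of $q-p$ is read off without appealing to the order-theoretic characterization $p\leq q\Leftrightarrow qp=p$; your version is shorter but outsources that last step to a standard (if folklore) equivalence, which strictly speaking deserves its own one-line verification.
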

\begin{proof}We write the decomposition
	\[q=x+y+ y^* +z,\quad x= pqp, \quad y= pq(1-p), \quad
	z= (1-p)q(1-p).\] By our assumption $x=p.$ Note that $q$ is a projection. Hence
	\[x=pqp=pq^2p=p(x+y+ y^* +z)^2p = x+ y y^*.\]
	Thus $y=0 $ and $q-p=z\geq 0$. 
\end{proof}

 To this end we need the following proposition. For technical simplicity, we will only consider the case of finite von Neumann algebras, where the operator $b$ becomes measurable and is in $L^1.$
\begin{prop}\label{duss}
	Let $1<p<\infty $ and $1/p +1/p' =1$. Assume that $\mathcal M$ is a finite von Neumann algebra equipped with a normal faithful tracial state $\tau$ and that  $T:L_p(\mathcal  M)\to L_p(\mathcal  M)$ is a positive Lamperti operator. If the adjoint operator $T^*:L_{p'}(\mathcal  M)\to L_{p'}(\mathcal  M)$ is also Lamperti, then $J(\mathcal  M)= J(1)\mathcal  M J(1)$.
\end{prop}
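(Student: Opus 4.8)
The plan is to exploit the representations of both $T$ and its adjoint. Since $T$ is positive, Theorem \ref{charac} gives $q := s(b) = J(1)$ and $T(x) = bJ(x)$ on $\mathcal S(\mathcal M) = \mathcal M$ (recall $\mathcal M$ is finite, so $\mathcal S(\mathcal M)=\mathcal M$). The inclusion $J(\mathcal M)\subseteq J(1)\mathcal M J(1) = q\mathcal M q$ is immediate, because $J(x) = J(1\cdot x\cdot 1) = J(1)J(x)J(1)$ for the Jordan homomorphism $J$; so everything reduces to the reverse inclusion $q\mathcal M q\subseteq J(\mathcal M)$. As $J(\mathcal M)$ is a $w^*$-closed subspace (the image of a normal Jordan $*$-homomorphism) and the projections of the von Neumann algebra $q\mathcal M q$ have $w^*$-dense linear span, it suffices to prove that every projection $g\leq q$ belongs to $J(\mathcal M)$. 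First I would record that, because the trace pairing $\langle a,b\rangle=\tau(ab)$ is self-dual on the positive cones, positivity of $T$ forces $T^*$ to be positive as well; since $T^*$ is Lamperti by hypothesis, Theorem \ref{charac} applies to it and yields a positive $\beta$ and a normal Jordan $*$-homomorphism $K$ with $T^*(y)=\beta K(y)$ and $s(\beta)=K(1)$.

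The crux is an orthogonality adjunction between $J$ and $K$. For projections $e,g\in\mathcal P(\mathcal M)$ the duality $\langle Te,g\rangle = \langle e,T^*g\rangle$ holds. Since $Te\geq 0$ and $g\geq 0$, one has $\langle Te,g\rangle=\tau(Te\,g)=0$ if and only if $s(Te)\perp g$; and by the support computation \eqref{eq:suppt}, $s(Te)=s(b)\wedge J(e)=J(e)$, and symmetrically $s(T^*g)=K(g)$. Comparing the two sides yields the key equivalence
\[
J(e)\perp g \iff e\perp K(g),\qquad e,g\in\mathcal P(\mathcal M).
\]
I expect this to be the main obstacle: it is where finiteness of $\mathcal M$ is used (so that every projection lies in $\mathcal S(\mathcal M)$ and the duality is licit), and where the positivity of $T$ and $T^*$ must be combined correctly with the two support formulas. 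A priori one might fear that the merely Jordan (rather than multiplicative) nature of $J$, or a ``diagonal'' embedding phenomenon, could obstruct surjectivity onto the corner; the adjunction is exactly what rules this out.

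Granting the adjunction, the remaining steps are formal. I would first show $J(K(g'))\leq g'$ for every projection $g'\leq q$: applying the adjunction with $e=K(g')$ and $g=q-g'$ reduces this to $K(g')\perp K(q-g')$, which holds because $g'\perp(q-g')$ and a Jordan $*$-homomorphism sends orthogonal projections to orthogonal projections (from $K(g')\circ K(q-g')=K(g'\circ(q-g'))=0$ and the fact that two projections with vanishing Jordan product are orthogonal). Then, for a projection $g\leq q$, set $e_g:=1-K(q-g)$, again a projection in $\mathcal M$. The adjunction with $e=e_g$ gives $J(e_g)\perp(q-g)$, hence $J(e_g)\leq g$; on the other hand $J(e_g)=J(1)-J(K(q-g))=q-J(K(q-g))\geq q-(q-g)=g$ by the inequality just proved. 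Thus $J(e_g)=g$, so $g\in J(\mathcal M)$. Since $g\leq q$ was arbitrary, the density argument above yields $q\mathcal M q\subseteq J(\mathcal M)$, completing the proof that $J(\mathcal M)=J(1)\mathcal M J(1)$.
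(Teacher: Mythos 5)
Your proof is correct, and it takes a genuinely different route from the paper's. The paper argues by contradiction: it picks a projection $f_1\in J(1)\mathcal M J(1)\setminus J(\mathcal M)$, forms the minimal projection $e_1=J(\widetilde e_1)$ in $J(\mathcal P(\mathcal M))$ dominating $f_1$ (using that $J$ preserves finite meets of projections and that $J(\mathcal M)$ is $\operatorname{w}^*$-closed), sets $f_2=e_1-f_1$ with its own minimal $e_2$, and then, via Lemma \ref{proless} and trace identities involving $b$, proves the key claim $J(s(T^*(f_i)))=e_i$; since $e_1e_2\neq 0$ while $f_1f_2=0$, this contradicts the Lamperti property of $T^*$ --- which the paper uses only through the defining orthogonality property, never through a structural representation of $T^*$. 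You instead apply Theorem \ref{charac} to $T^*$ itself (legitimate: $T^*$ is positive by the self-duality of the positive cones under the trace pairing, and Lamperti by hypothesis), obtain a second Jordan homomorphism $K$, and derive the adjunction $J(e)\perp g\iff e\perp K(g)$ from the support formula \eqref{eq:suppt} applied to both $T$ and $T^*$ together with $\tau(Te\,g)=\tau(e\,T^*g)$; this produces an explicit preimage $g=J\bigl(1-K(J(1)-g)\bigr)$ for every projection $g\leq J(1)$, after which both proofs finish with the same $\operatorname{w}^*$-density-of-projections argument. Your route is constructive and shorter: it bypasses the contradiction, the minimality construction, and Lemma \ref{proless}, and the adjunction is a nice by-product in its own right (a Galois-type duality between the projection maps $e\mapsto J(e)$ and $g\mapsto K(g)$). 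What the paper's route buys is that it never needs the structure theorem for the adjoint, only the weaker fact that a positive Lamperti operator sends orthogonal positive elements to elements with orthogonal supports --- a distinction of little consequence here, since Theorem \ref{charac} applies to $T^*$ in any case.
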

\begin{proof} Assume by contradiction $J(\mathcal  M)\neq J(1)\mathcal  M J(1).$ Then there exists a nonzero projection $f_1\in J(1)\mathcal  M J(1)\setminus J(\mathcal  M) $ (if not, then $J(\mathcal  M)$ contains the span of all projections in $ J(1)\mathcal  M J(1) $ which is a $\operatorname{w}^*$-dense subspace).  Let us define \[e_1=\wedge\{J(e) :f_1\leq J(e)\leq J(1), e\in \mathcal P (\mathcal M)\}.\] Then  $f_1\leq e_1$. Recall   that $J$ is a normal Jordan $*$-homomorphism. According to Lemma \ref{decom}, we may write $J$ as a direct sum $J=J_1 + J_2$, where $J_1$ is a normal $*$-homomorphism and $J_2$ is a normal $*$-anti-homomorphism.  Then for a finite family of projections $q_1,\ldots,q_n$, we have
	\[J_1(\vee_{1\leq i\leq n}q_i^\bot)
	=J_1(s(\sum_{i=1}^{n}q_i^\bot))
	=s(J_1(\sum_{i=1}^{n}q_i^\bot)) =\vee_{1\leq i\leq n}J_1(q_i^\bot),\] 
	whence $J_1(\wedge_{1\leq i\leq n}q_i  ) = \wedge_{1\leq i\leq n} J_1 (q_i)$. Similarly $J_2(\wedge_{1\leq i\leq n}q_i  ) = \wedge_{1\leq i\leq n} J_2 (q_i)$. Hence we have 
	\[J (\wedge_{1\leq i\leq n}q_i  ) = \wedge_{1\leq i\leq n} J  (q_i).\]
	By the $\operatorname{w}^*$-closeness of $J(\mathcal M)$, we see that there exists a projection   $\widetilde{e}_1 \in \mathcal M$ with $e_1 = J(\widetilde{e}_1 )$.  Denote $f_2=e_1-f_1.$ Clearly, $f_2$ is a projection in $J(1)\mathcal  M J(1)\setminus J(\mathcal M).$ Now, choose $e_2$ and $\widetilde{e}_2$ similarly as before corresponding to $f_2$. Note that we have $0\neq e_1-f_1= f_2\leq e_2.$ Therefore, we have $e_1\wedge e_2\neq 0$. Thus, $e_1e_2\neq 0.$ Note that by construction, 
	\begin{equation}\label{eq:seperate}
	f_1f_2=f_2f_1=0.
	\end{equation}  Since $T$ is positive, so is $T^*$. Note that $\tau$ is finite and hence all projections are $\tau$-finite. Thus for $i=1,2,$  $T^*(f_i)$ is well-defined and $T^*(f_i)\geq 0.$   

	Denote $\overline{e_i}=s(T^* (f_i))$ for $=1,2.$ 
	
	We claim that $J(\overline{e_i})=e_i$ for $i=1,2.$ To establish our claim,
	we first observe   that 
	\begin{equation*}\label{lastkic}\tau(T^*(f_i)\widetilde{e_i})=\tau(f_ibe_i)=\tau(e_i f_ib)=\tau(f_ib) =\tau(f_ibJ(1))=\tau(T^*(f_i)),\end{equation*} 
	and therefore  \[\tau(T^*(f_i)-T^*(f_i)^{\frac{1}{2}}\widetilde{e_i}T^*(f_i)^{\frac{1}{2}})=0,
	\quad T^*(f_i)=T^*(f_i)^{\frac{1}{2}}\widetilde{e_i}T^*(f_i)^{\frac{1}{2}},\quad i=1,2.\]   
	By using the functional calculus for $t\mapsto \chi_{\sigma(T^*(f_i))} (t) t^{-1/2}$, we see that
	\[\overline{e_i}=\overline{e_i}\widetilde{e_i}\overline{e_i}\] for $i=1,2.$ Therefore, by Lemma \ref{proless} we have $\overline{e_i}\leq \widetilde{e_i}$ for $i=1,2.$ Hence, we obtain 
	\begin{equation}\label{dont1}
	J(\overline{e_i})\leq e_i
	\end{equation} for $i=1,2.$
	Note that we have 
	\begin{equation*}\label{dont2}
	0=\tau(T^*(f_i)\overline{e_i}^{\perp})=\tau(f_iT(\overline{e_i}^{\perp}))=\tau(f_ibJ(\overline{e_i}^{\perp})).
	\end{equation*}
	Together with the fact that $b$ commutes with the projection $J(\overline{e_i}^{\perp})$, we get \begin{equation*}\label{dont3}b^{\frac{1}{2}}J(\overline{e_i}^{\perp})f_iJ(\overline{e_i}^{\perp})b^{\frac{1}{2}}=0.\end{equation*} 
	Therefore  \begin{equation*}\label{dont4}s(b)J(\overline{e_i}^{\perp})f_iJ(\overline{e_i}^{\perp})s(b)
	=J(1)J(\overline{e_i}^{\perp})f_iJ(\overline{e_i}^{\perp})J(1)=J(\overline{e_i}^{\perp})f_iJ(\overline{e_i}^{\perp})=0.\end{equation*} Thus 
	\begin{equation*}\label{dont5}
	0=\tau(J(\overline{e_i}^{\perp})f_iJ(\overline{e_i}^{\perp}))=\tau(f_iJ(\overline{e_i}^{\perp}))=\tau(f_iJ(\overline{e_i}^{\perp})f_i).
	\end{equation*}
	Therefore    $f_iJ(\overline{e_i}^{\perp})f_i=0$ for $i=1,2.$ Note that $f_i\leq J(1)$. So we have \[f_i = f_i J(1)f_i =f_iJ(\overline{e_i})f_i\] for $i=1,2.$ Hence by Lemma \ref{proless} we have $f_i\leq J(\overline{e_i})$ for $i=1,2.$ From this, using \eqref{dont1} and the minimality of $e_i$ we conclude that $J(\overline{e_i})=e_i$ for $i=1,2.$
	
	Now we obtain
	\[J_1(s(T^*(f_1)) s(T^*(f_2))) + J_2 (s(T^*(f_2)) s(T^*(f_1)))  =
	J(s(T^*(f_1)))J(s(T^*(f_2)))=e_1 e_2\neq 0\] by the above claim. This yields that $s(T^*(f_1))s(T^*(f_2))\neq 0$, and in particular we have $T^*(f_1)T^*(f_2)\neq 0$. However we have $f_1f_2=0$ by \eqref{eq:seperate}. So $T^*$ is not Lamperti, which leads to a contradiction. 
\end{proof}

 We need the following lemma which was proved in \cite{kan79thesis} in the classical case. The proof of our lemma is completely different from \cite{kan79thesis} but again restricted to finite von Neumann algebras only.

\begin{lem}\label{extension}
	Let $\mathcal M$ be a finite von Neumann algebra and $\tau$ be a normal faithful tracial state on $\mathcal M$. Let $1\leq p<\infty.$ Let $T:L_p(\mathcal M)\to L_p(\mathcal M)$ be a  positive Lamperti operator with the representation $T(x)=bJ(x)$ for all $x\in\mathcal M.$ Then $J$ and $T$ can be extended continuously  to maps on $ L_0(\mathcal M) $ with respect to the topology of convergence of measure. Moreover,   $Tx=bJ(x)$ for all $x\in L_0(\mathcal M).$
\end{lem}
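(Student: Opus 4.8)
The plan is to use that, since $\mathcal M$ is finite, $L_0(\mathcal M)$ equipped with the measure topology is a complete, metrizable topological $*$-algebra in which $\mathcal M$ is a dense subspace. Consequently it suffices to prove that the restriction $J|_{\mathcal M}$ is uniformly continuous for the measure topology; being linear, this amounts to continuity at $0$, and the unique continuous extension $\overline J\colon L_0(\mathcal M)\to L_0(\mathcal M)$ then exists by completeness. Once $\overline J$ is in hand I would set $\overline T=b\,\overline J$. Here $b\in L_0(\mathcal M)$: indeed in the finite case $\tau(b^p)=\tau(b^pJ(1))\le\tau(1)<\infty$ by Theorem \ref{charac}(iv), so $b\in L_p(\mathcal M)\subseteq L_0(\mathcal M)$. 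Since multiplication is jointly continuous on $L_0(\mathcal M)$, the map $\overline T=b\,\overline J$ is measure-continuous, extends $T|_{\mathcal M}$ (where $Tx=bJ(x)$ by Theorem \ref{charac}), and satisfies $\overline T x=b\,\overline J(x)$ for all $x\in L_0(\mathcal M)$, which is the asserted formula.

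The continuity of $J$ I would reduce to the two pieces of its Jordan decomposition. By Lemma \ref{decom} write $J=J_1+J_2$ with $J_1(\cdot)=J(\cdot)e$ a normal $*$-homomorphism and $J_2(\cdot)=J(\cdot)f$ a normal $*$-anti-homomorphism, both contractive for $\|\cdot\|_\infty$. Recall that a fundamental system of neighborhoods of $0$ for the measure topology is given by $N(\varepsilon,\delta)=\{x:\exists\text{ a projection }q,\ \tau(q^\perp)\le\delta,\ \|xq\|_\infty\le\varepsilon\}$ (and equally with $\|qx\|_\infty$). So fix a sequence $x_n\to 0$ in measure and $\varepsilon>0$, and put $q_n=\chi_{(\varepsilon,\infty)}(|x_n|)$, so that $\tau(q_n)\to 0$ and $\|x_n(1-q_n)\|_\infty\le\varepsilon$. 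For the homomorphism part, $J_1(x_n)(1-J_1(q_n))=J_1\big(x_n(1-q_n)\big)$ has norm $\le\varepsilon$; thus with $e_n=1-J_1(q_n)$ we get $\|J_1(x_n)e_n\|_\infty\le\varepsilon$ and $\tau(e_n^\perp)=\tau(J_1(q_n))$. Symmetrically, the anti-homomorphism identity $J_2\big(x_n(1-q_n)\big)=(1-J_2(q_n))J_2(x_n)$ gives left-hand control $\|(1-J_2(q_n))J_2(x_n)\|_\infty\le\varepsilon$, the cutting projection $1-J_2(q_n)$ having complement of trace $\tau(J_2(q_n))$.

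The heart of the matter, and the step I expect to be the main obstacle, is the absolute-continuity estimate $\tau(J_i(q_n))\to 0$ whenever $\tau(q_n)\to 0$. I would prove it by observing that $\psi_i:=\tau\circ J_i$ is a normal positive functional on the finite algebra $\mathcal M$ (by normality of $J_i$ and of $\tau$), hence $\psi_i(x)=\tau(h_i x)$ for some $h_i\in L_1(\mathcal M)_+$. A truncation $h_i=h_i\chi_{[0,M]}(h_i)+h_i\chi_{(M,\infty)}(h_i)$ then yields $\psi_i(q_n)=\tau(h_iq_n)\le M\,\tau(q_n)+\tau\big(h_i\chi_{(M,\infty)}(h_i)\big)$, and letting $n\to\infty$ and then $M\to\infty$ gives $\psi_i(q_n)\to 0$. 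The subtlety here is precisely that $J$ need not preserve or dominate $\tau$, so one cannot cut with $q_n$ directly in the range; the argument instead routes the trace control through the $L_1$-density of $\psi_i$, and the non-multiplicativity of $J$ forces the split into a homomorphism (right cutting) and an anti-homomorphism (left cutting).

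Combining the last two paragraphs, for each fixed $\varepsilon$ we obtain projections witnessing $J_1(x_n)\to 0$ and $J_2(x_n)\to 0$ in measure, whence $J(x_n)=J_1(x_n)+J_2(x_n)\to 0$; this is continuity of $J$ at $0$. The extension $\overline J$, and then $\overline T=b\,\overline J$, follow as in the first paragraph. Finally, for consistency one checks $\overline T=T$ on $L_p(\mathcal M)$: if $x\in L_p(\mathcal M)$ and $x_k\in\mathcal M$ with $\|x_k-x\|_p\to 0$, then $x_k\to x$ in measure (since $\|\cdot\|_p$ dominates the measure topology on a finite algebra), so $Tx_k\to Tx$ in $L_p$ while $Tx_k=bJ(x_k)\to b\,\overline J(x)=\overline T x$ in measure, forcing $Tx=\overline T x$.
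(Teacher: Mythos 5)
Your proposal is correct, and it reaches the conclusion by a genuinely different route than the paper at the crucial step. Both arguments reduce the measure-continuity of $J$ to the same absolute-continuity statement --- $\tau(q_n)\to 0$ implies $\tau(J(q_n))\to 0$ for projections $q_n$ --- but they prove it differently. The paper restricts to positive $x_n$, uses that $J$ commutes with functional calculus on abelian subalgebras to write $J(e_\varepsilon^{\perp}(x_n))=e_\varepsilon^{\perp}(J(x_n))$, and then deduces $\tau(J(q_n))\to 0$ from the weight inequality $\tau(b^pJ(q_n))\leq C\tau(q_n)$ of Theorem \ref{charac}(iv), decomposing $b^p$ dyadically through its spectral projections $f_k=\chi_{[2^k,2^{k+1})}(b^p)$ and using $\sum_k\tau(f_k)\leq \tau(1)<\infty$ (finiteness of $\tau$) to sum the pieces. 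You instead split $J=J_1+J_2$ by Lemma \ref{decom}, control measure-topology neighbourhoods by one-sided cutting with the projections $1-J_i(q_n)$ (right cutting for the homomorphism, left cutting for the anti-homomorphism), and obtain the absolute continuity from the Radon--Nikodym density $h_i\in L_1(\mathcal M)_+$ of the normal positive functional $\tau\circ J_i$ together with truncation of $h_i$. Your route never touches $b$ or inequality (iv): it proves the stronger, purely algebraic statement that every normal Jordan $*$-homomorphism of a finite von Neumann algebra is continuous for the measure topology, and it handles non-self-adjoint $x_n$ directly, whereas the paper's argument is tied to the Lamperti structure and implicitly reduces to positive elements. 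What the paper's proof buys in exchange is that it manipulates only objects already displayed in Theorem \ref{charac} (the weight $\tau(b^pJ(\cdot))$ and the spectral data of $b$), avoiding the identification of normal functionals with $L_1$-densities. Two minor points on your side, neither affecting correctness: the bound for $b$ should carry the constant of Theorem \ref{charac}(iv), i.e.\ $\tau(b^p)=\tau(b^pJ(1))\leq C\tau(1)$ when $T$ is not a contraction (finiteness is all you need), and your closing consistency check that $\overline{T}$ agrees with $T$ on $L_p(\mathcal M)$ is a welcome addition that the paper leaves implicit.
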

\begin{proof}First we show that $J:\mathcal M\to \mathcal M$ is continuous in the topology of convergence of measure on $L_0(\mathcal M)$. Take a sequence $(x_n)_{n\geq 1}\subseteq\mathcal M_+$ which converges to $0$ in measure, that is,  $\tau(e_\varepsilon^{\perp}(x_n))\to 0$ as $n\to\infty$ for all $\varepsilon>0.$ For any $x\in\mathcal M_+,$ the restriction of $J$ on the abelian von Neumann subalgebra generated by $x$ is a classical normal $*$-homomorphism. Note that $J(x)\geq \varepsilon$ iff $J(x)=J(1)J(x)J(1)\geq \varepsilon J(1)$. It follows that $J(e_\varepsilon^{\perp}(x))=e_\varepsilon^{\perp}(J(x))$ for all $\varepsilon>0.$ 
	
	We also have $\tau(b^pJ(e_\varepsilon^{\perp}(x_n)))\leq C\tau(e_\varepsilon^{\perp}(x_n)).$ This shows that \begin{equation}\label{avt1}\lim\limits_{n\to\infty}\tau(b^pJ(e_\varepsilon^{\perp}(x_n)))=0.\end{equation} Let $f_k$ denote the spectral projection $\chi_{[2^k,2^{k+1})}(b^p),$ $k\in\mathbb Z.$ Note that we have \begin{equation}\label{bhotehobe}b^pJ(e_\varepsilon^{\perp}(x_n))\geq 2^kJ(e_\varepsilon^{\perp}(x_n))f_k.\end{equation} Therefore, by \eqref{avt1} we have \begin{equation}\label{avt2}\tau(J(e_\varepsilon^{\perp}(x_n))f_k)\to 0\end{equation} as $n\to\infty$ for all $k\in\mathbb Z.$ Note that since $J(e_\varepsilon^{\perp}(x_n))$ is a projection and contained in $s(b^p),$ we have \begin{equation}\label{avtt4}J(e_\varepsilon^{\perp}(x_n))=\sum_kJ(e_\varepsilon^{\perp}(x_n))f_k.\end{equation}  Let us fix $\delta>0.$ Note that $\sum_{k}f_k\leq J(1)$ and $\tau$ is finite so that $\sum_{k}\tau(f_k ) <\infty $. Using \eqref{avt2} we choose $n$  large enough so that $\tau(J(e_\varepsilon^{\perp}(x_n))f_k)\leq \frac{\delta}{2s}$ for $|k|\leq s$ and $\sum_{|k|>s}\tau(f_k)<\delta.$ Then by \eqref{avtt4} and \eqref{bhotehobe}  we have \begin{equation}\label{amig}\tau(J(e_\varepsilon^{\perp}(x_n))=\sum\limits_{|k|\leq s}\tau(J(e_\varepsilon^{\perp}(x_n))f_k)+\sum\limits_{|k|> s}\tau(J(e_\varepsilon^{\perp}(x_n))f_k)\leq \delta+\sum_{|k|>s}\tau(J(e_\varepsilon^{\perp}(x_n))f_k).\end{equation} 
	Also note that \begin{equation}\label{amigg}\sum_{|k|>s}\tau(J(e_\varepsilon^{\perp}(x_n))f_k)\leq \sum_{|k|>s}\|J(e_\varepsilon^{\perp}(x_n)\|_\infty\tau(f_k)\leq \sum_{|k|>s}\tau(f_k)<\delta
	\end{equation}as $J$ is a contraction. Together with  \eqref{amig} and \eqref{amigg} this establishes that $\lim\limits_{n\to\infty}\tau(J(e_\varepsilon^{\perp}(x_n))=0.$ Therefore, $J$ is continuous in the topology of measure. Since $\mathcal M$ is dense in $L_0(\mathcal M)$, we can extend uniquely $J$ to a map on $L_0(\mathcal M)$, which is also continuous. Now we may extend $T$ to a linear map on $L_0(\mathcal M)$ by setting $ {T}x=b\tilde{J}(x).$ This completes the proof of the lemma. 
\end{proof}	
Kan \cite{kan78erglamperti} showed that the converse of Proposition \ref{duss} is also true in the classical setting. Though we could not establish the analogue for the noncommutative setting, we may prove a partial result.
\begin{prop}
	Let $1<p<\infty$ and $\mathcal M$ be a finite von Neumann algebra. Let $T:L_p(\mathcal M)\to L_p(\mathcal M)$ be a  positive and surjective Lamperti operator. Then $T^*$ is again  Lamperti.
\end{prop}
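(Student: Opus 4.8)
The plan is to reduce everything, via Theorem~\ref{charac} and Proposition~\ref{properties}(iii), to a support computation for $T^*$ that parallels the one carried out in Proposition~\ref{duss}, but run in the favourable direction. First I would invoke Theorem~\ref{charac} to write $T(x)=bJ(x)$ with $w=J(1)=s(b)$, and then Proposition~\ref{properties}(iii) to record that, by surjectivity, $J$ is surjective, $s(b)=J(1)=1$, and that $J$ is injective on $p_1\mathcal M p_1=p_1\mathcal M$. Since $T^*$ is positive, Proposition~\ref{NOTS} reduces the claim to showing that $s(T^*g_1)\,s(T^*g_2)=0$ whenever $g_1,g_2$ are projections with $g_1g_2=0$; the passage from orthogonal projections to arbitrary disjoint positive elements $y_1,y_2\in L_{p'}(\mathcal M)_+$ is then a routine monotone approximation using Lemma~\ref{1.10} together with $0\le T^*y_i\le\|y_i\|_\infty T^*s(y_i)$.

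Writing $\overline e_i=s(T^*g_i)$, the heart of the argument is to prove the two facts $\overline e_i\le p_1$ and $J(\overline e_i)=g_i$. For the first, I would note that $J(p_0)=0$ (as $\rho_T p_0=0$ and $s(b)=1$), so that $T$ annihilates $L_p(p_0\mathcal M)$; pairing $T^*g_i$ against $x\in p_0\mathcal M$ then gives $p_0T^*g_i=0$, i.e.\ $\overline e_i\le p_1$. For the second, surjectivity of $J$ lets me choose a projection $\widetilde g_i\in p_1\mathcal M$ with $J(\widetilde g_i)=g_i$, and I would then repeat the computation of Proposition~\ref{duss}: from $\tau(T^*g_i\,\widetilde g_i)=\tau(bJ(\widetilde g_i)g_i)=\tau(bg_i)=\tau(T^*g_i)$ and positivity one gets $\overline e_i=\overline e_i\,\widetilde g_i\,\overline e_i$, hence $\overline e_i\le\widetilde g_i$ by Lemma~\ref{proless} and thus $J(\overline e_i)\le g_i$; and from $0=\tau(T^*g_i\,\overline e_i^{\perp})=\tau(g_i\,bJ(\overline e_i^{\perp}))$ together with $[b,J(\overline e_i^{\perp})]=0$ and $s(b)=J(1)=1$ one obtains $g_iJ(\overline e_i^{\perp})g_i=0$, whence $g_i\le J(\overline e_i)$ again by Lemma~\ref{proless}. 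Combining the two inequalities gives $J(\overline e_i)=g_i$; note that the minimal-dominating-projection step of Proposition~\ref{duss} is here unnecessary, precisely because $g_i\in\mathcal M=J(\mathcal M)$ already.

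Finally, since $J$ is a Jordan $*$-homomorphism, the identity $J(aba)=J(a)J(b)J(a)$ yields $J(\overline e_1\overline e_2\overline e_1)=J(\overline e_1)J(\overline e_2)J(\overline e_1)=g_1g_2g_1=0$. As $\overline e_1,\overline e_2\le p_1$ and $p_1$ is central, the positive element $\overline e_1\overline e_2\overline e_1=(\overline e_2\overline e_1)^*(\overline e_2\overline e_1)$ lies in $p_1\mathcal M$, on which $J$ is injective; therefore $\overline e_1\overline e_2\overline e_1=0$, i.e.\ $\overline e_2\overline e_1=0$ and $s(T^*g_1)\,s(T^*g_2)=0$, as required. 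I expect the main obstacle to be the simultaneous control of the two facts of the middle paragraph: one must both pin down $J(s(T^*g_i))=g_i$ and confine $s(T^*g_i)$ below the central support $p_1$, since it is exactly the combination of these that allows the injectivity of $J$ on $p_1\mathcal M$ to be brought to bear in the last step. The finiteness hypothesis on $\mathcal M$ is used throughout to guarantee that every projection is $\tau$-finite and that $T^*$ acts on $\mathcal M\subseteq L_{p'}(\mathcal M)$, so that $T^*g_i$ and $T^*s(y_i)$ make sense as positive elements.
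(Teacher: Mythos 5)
Your proof is correct, but it follows a genuinely different route from the paper's. The paper identifies the adjoint explicitly: after invoking Proposition \ref{properties}(iii) (so that $J$ restricts to a normal Jordan $*$-isomorphism $p_1\mathcal Mp_1\to\mathcal M$), it observes via Lemma \ref{decom} that $\varphi=\tau\circ J$ is a normal tracial state, writes $\varphi=\tau(t\,\cdot)$ with $t$ a central Radon--Nikodym density, extends $J$ and its inverse to $L_0(\mathcal M)$ by Lemma \ref{extension}, and then verifies by a direct trace computation that $T^*(y)=\widetilde{b}\,J|_{L_0(p_1\mathcal Mp_1)}^{-1}(y)$ with $\widetilde{b}=J|_{L_0(p_1\mathcal Mp_1)}^{-1}(b)\,t$; being of the canonical form of Theorem \ref{charac} (cf.\ Remark \ref{kotokikorarchilobaki}), this operator is Lamperti. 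You never compute $T^*$: instead you verify the support-separation property directly on orthogonal projections, establishing $s(T^*g_i)\le p_1$ and $J(s(T^*g_i))=g_i$ by rerunning the trace-and-support computations of Proposition \ref{duss} in the favourable direction (simplified, as you correctly note, because surjectivity gives $J(1)=1$, $J(\mathcal M)=\mathcal M$, and injectivity of $J$ on $p_1\mathcal Mp_1$, so no minimal-dominating-projection step is needed), and you conclude with the Jordan identity $J(aba)=J(a)J(b)J(a)$ together with injectivity of $J$ on $p_1\mathcal M$. Your route is more elementary --- it dispenses with Lemma \ref{extension} and the Radon--Nikodym step entirely --- but yields no formula for the adjoint, whereas the paper's heavier argument produces an explicit Lamperti representation of $T^*$. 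Two cosmetic remarks: your reduction ``from projections to general disjoint positive elements'' is superfluous, since the definition of a Lamperti operator only involves $\tau$-finite projections (and in a finite algebra every projection is such); and, exactly as in the paper's own proof, the appeal to Proposition \ref{properties} for a not-necessarily-contractive Lamperti operator should be justified by rescaling $T$ by its norm, which multiplies $b$ and $\rho_T$ by positive scalars and leaves $J$, $p_0$, $p_1$ unchanged.
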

\begin{proof}   Since $T$ is onto, it follows from Proposition \ref{properties} that $J$ is unital and onto, and moreover the restriction $J:p_1\mathcal Mp_1\to\mathcal M$ is a normal  Jordan $*$-isomorphism. Consider a normal faihtful tracial state $\tau$ on $\mathcal M$; together with Lemma \ref{decom}, we note that ${\varphi}\coloneqq \tau\circ J$ is a normal tracial state on $\mathcal M$. Thus we may write $\varphi = \tau (t \cdot )$ for some positive element $t\in L_1 (\mathcal M,\tau)$ which commutes with $\mathcal M$.   By Lemma \ref{extension}, the elements $\widetilde{b}=J|_{L_0(p_1\mathcal Mp_1)}^{-1}(b)t$ and $S(y)=\widetilde{b}J|_{L_0(p_1\mathcal Mp_1)}^{-1}(y)$ can be well-defined for $y\in\mathcal M.$ We claim that the adjoint operator of $T:L_p(\mathcal M,\tau)\to L_p(\mathcal M,\tau)$ is $S.$ Indeed, note that
	\begin{align*}
	\begin{split}
	\tau(xS(y))&=\tau(x\widetilde{b}J|_{L_0(p_1\mathcal Mp_1)}^{-1}(y))\\
	&=\tau(xJ|_{L_0(p_1\mathcal Mp_1)}^{-1}(b)tJ|_{L_0(p_1\mathcal Mp_1)}(y))\\
	&=\tau(txJ|_{L_0(p_1\mathcal Mp_1)}^{-1}(b)J|_{L_0(p_1\mathcal Mp_1)}(y))\\
	&=\varphi(xJ|_{L_0(p_1\mathcal Mp_1)}^{-1}(b)J|_{L_0(p_1\mathcal Mp_1)}(y))\\
	&=\tau(J(xJ|_{L_0(p_1\mathcal Mp_1)}^{-1}(b)J|_{L_0(p_1\mathcal Mp_1)}^{-1}(y)))\\
	&=\tau(J(x)by)=\tau(T(x)y)\\
	\end{split}
	\end{align*} for all $x\in\mathcal M,y\in\mathcal M.$ This establishes the claim. Clearly, $S$ is a Lamperti operator by Theorem \ref{charac}. This completes the proof.
\end{proof}	

We are ready to prove the following key description of doubly Lamperti operators on noncommutative $L_p$ spaces. 

\begin{thm}\label{CHARACDL}Let $\mathcal M $ be a finite von Neumann algebra. Let $1<p<\infty.$ Suppose that $T:L_p(\mathcal M)\to L_p(\mathcal M)$ is a positive Lamperti operator with the representation  $Tx=bJ(x)$ as in Theorem \ref{charac}.   Then there exist an element $\theta\in\mathcal M$ and a positive Lamperti contraction $S:L_p(\mathcal M)\to L_p(\mathcal M)$  such that  $T^n=\theta_nS^n,$ where 
	\begin{enumerate}
		\item $S$ is a positive  Lamperti contraction which vanishes on $L_p(p_0\mathcal M p_0)$ and is isometric on $L_p(p_1\mathcal M p_1);$
		\item $\theta_n$ is a positive element in $\mathcal M$ of the form $\theta_n=\theta J(\theta)\cdots J^{n-1}(\theta)$ and $\theta_nS^n(x)=S^n(x)\theta_n$ for all $n\geq 1$ and $x\in \mathcal M;$
		\item for all $n\geq 1,$ $\|T^n\|_{L_p(\mathcal M)\to L_p(\mathcal M)}\leq\|\theta_n\|_\infty.$ Moreover, the equality holds if  the adjoint operator $T^*: L_{p'} (\mathcal M)\to L_{p'} (\mathcal M) $ for $1/p +1/p' =1$ is also Lamperti.
	\end{enumerate}
\end{thm}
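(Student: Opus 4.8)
The plan is to peel a bounded ``multiplier'' off $T$ so that the remaining operator is the desired contraction $S$. Recall from \eqref{repp} and Lemma \ref{lem:d} that $Tx=bJ(x)$ with $\rho_T\in\mathcal Z(\mathcal M)$ and $p_1=s(\rho_T)$. I would set
\[
\theta=J(\rho_T^{1/p}),\qquad \beta=b\,J(\rho_T^{-1/p}p_1),\qquad S(x)=\beta J(x),
\]
where $\rho_T^{-1/p}p_1$ is the generalized inverse (a positive element affiliated with $\mathcal Z(\mathcal M)$) and $J$ is extended to affiliated operators via Lemma \ref{extension}. Since $\rho_T p_0=0$ forces $bJ(p_0)=0$, hence $b=bJ(p_1)$, one checks $\theta\beta=bJ(\rho_T^{1/p}\rho_T^{-1/p}p_1)=bJ(p_1)=b$, so $Tx=\theta S(x)$. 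A weight computation gives $\tau(\beta^pJ(x))=\tau(b^pJ(\rho_T^{-1}p_1x))=\tau(\rho_T\rho_T^{-1}p_1x)=\tau(p_1x)$ for $x\in\mathcal M_+$; thus by Theorem \ref{charac} and Remark \ref{kotokikorarchilobaki} $S$ is a positive Lamperti contraction whose weight $\rho_S$ equals $p_1$. This yields (i): $\|S(p_0xp_0)\|_p^p=\tau(p_1(p_0xp_0)^p)=0$ and $\|S(p_1xp_1)\|_p^p=\tau((p_1xp_1)^p)$, so $S$ vanishes on $L_p(p_0\mathcal Mp_0)$ and is isometric on $L_p(p_1\mathcal Mp_1)$. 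Also $\|\theta\|_\infty\le\|\rho_T^{1/p}\|_\infty<\infty$, since Jordan $*$-homomorphisms are contractive.

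The heart of the argument is the identity $T^nx=\theta_nS^nx$. Using $bJ(x)=b^{1/2}J(x)b^{1/2}$, the Jordan triple identity $J(aya)=J(a)J(y)J(a)$, and the fact that Jordan $*$-homomorphisms send commuting self-adjoint elements to commuting ones (being $*$-homomorphisms on abelian subalgebras), an induction shows
\[
T^nx=\Big(\textstyle\prod_{k=0}^{n-1}J^k(b)\Big)J^n(x).
\]
Because $\rho_T$ is central it commutes with $b$, and by the nesting $J^l(\mathcal M)\subseteq J^{k+1}(\mathcal M)$ for $l\ge k+1$ together with commutativity preservation, the family $\{J^m(b),J^m(\rho_T^{\pm1/p}):m\ge0\}$ generates an abelian algebra. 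Factoring $J^k(b)=J^k(\theta)J^k(\beta)$ and rearranging freely inside it,
\[
\prod_{k=0}^{n-1}J^k(b)=\Big(\prod_{k=0}^{n-1}J^k(\theta)\Big)\Big(\prod_{k=0}^{n-1}J^k(\beta)\Big)=\theta_n\,\beta_n,
\]
where $\theta_n=\theta J(\theta)\cdots J^{n-1}(\theta)=\prod_{j=1}^nJ^j(\rho_T^{1/p})$ and $\beta_n=\prod_{k=0}^{n-1}J^k(\beta)$ is exactly the multiplier of $S^n$ (i.e. $S^nx=\beta_nJ^n(x)$). Hence $T^nx=\theta_n\beta_nJ^n(x)=\theta_nS^n(x)$, giving the factorization and the formula in (ii). Positivity of $\theta_n$ is clear, and each $J^j(\rho_T^{1/p})$ commutes with $J^n(\mathcal M)$ (for $j<n$ by nesting, for $j=n$ because $\rho_T^{1/p}$ is central), so $\theta_n$ commutes with $S^n(x)=\beta_nJ^n(x)$, which is the commutation in (ii).

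For (iii), the module inequality $\|\theta_ny\|_p\le\|\theta_n\|_\infty\|y\|_p$ and the contractivity of $S$ give $\|T^nx\|_p=\|\theta_nS^nx\|_p\le\|\theta_n\|_\infty\|S^nx\|_p\le\|\theta_n\|_\infty\|x\|_p$. For the equality when $T^*$ is also Lamperti, I would invoke Proposition \ref{duss}, which gives $J(\mathcal M)=J(1)\mathcal MJ(1)$; combined with the injectivity of $J$ on $p_1\mathcal Mp_1$ (as in Proposition \ref{properties}), $J$ restricts to a Jordan $*$-isomorphism of $p_1\mathcal Mp_1$ onto the corner $J(1)\mathcal MJ(1)$. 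Consequently $S$, and by iteration $S^n$, maps the relevant corner isometrically \emph{onto} the corresponding corner of $L_p(\mathcal M)$. Given $\varepsilon>0$, choosing the spectral projection $e=\chi_{(\|\theta_n\|_\infty-\varepsilon,\,\|\theta_n\|_\infty]}(\theta_n)\ne0$ (which commutes with the range of $S^n$) and a norm-one $\xi$ supported under $e$ in that range, surjectivity produces $x$ with $S^nx=\xi$, $\|x\|_p=1$ and $\|T^nx\|_p=\|\theta_n\xi\|_p\ge\|\theta_n\|_\infty-\varepsilon$; letting $\varepsilon\to0$ yields $\|T^n\|\ge\|\theta_n\|_\infty$, hence equality.

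The main obstacle I anticipate is making the algebra of the second paragraph fully rigorous: since $b$ and $\rho_T^{-1/p}p_1$ are typically unbounded, the Jordan-homomorphism identities, the commutativity preservation, and the factorization $J^k(b)=J^k(\theta)J^k(\beta)$ must be justified for affiliated operators through their spectral projections, relying on the finiteness of $\mathcal M$ and the measure-topology extension of $J$ from Lemma \ref{extension}. The secondary difficulty is the equality case, where one must extract from Proposition \ref{duss} the genuine \emph{surjectivity} of $S^n$ onto a corner in order to attain $\|\theta_n\|_\infty$.
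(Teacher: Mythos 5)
Your construction is the same as the paper's: your $S(x)=\beta J(x)$ with $\beta=bJ(\rho_T^{-1/p}p_1)$ is exactly the paper's $S=\widetilde{b}J$ with $\widetilde{b}=b\,J\big((p_1\rho_Tp_1)^{-1/p}\big)$, your $\theta=J(\rho_T^{1/p})$ is the paper's $\theta=J(\rho_T)^{1/p}$, and part (i) is proved by the identical weight computation $\tau(\beta^pJ(x))=\tau(p_1x)$ via \eqref{repp}. For part (ii) you differ only in presentation: the paper runs a direct induction $T^{n+1}(x)=T(\theta_nS^n(x))=bJ(\theta_n)J(S^n(x))=\theta_{n+1}S^{n+1}(x)$, while you first derive the closed form $T^nx=\prod_{k=0}^{n-1}J^k(b)\,J^n(x)$ and refactor inside a commuting family; both rest on the measure-topology extension of $J$ (Lemma \ref{extension}, where finiteness of $\mathcal M$ enters) and on multiplicativity of $J$ on commuting elements. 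One small imprecision: your justification that Jordan $*$-homomorphisms preserve commutativity of self-adjoint elements ("being $*$-homomorphisms on abelian subalgebras") should be routed through Lemma \ref{decom} — write $J=J_1+J_2$ with orthogonal ranges and note that an anti-homomorphism is multiplicative on commuting elements — since a Jordan homomorphism of an abelian algebra into a noncommutative one is not a homomorphism for free.

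The genuine divergence is the equality case of (iii), and there your argument has the gap you yourself flag: you need $S^n$ to map $L_p(p_1\mathcal Mp_1)$ \emph{onto} $L_p$ of the corner $J^n(1)\mathcal MJ^n(1)$, and this does not follow directly from Proposition \ref{duss}, because the unbounded density $\beta_n$ sits between $J^n(\mathcal M)$ and the range of $S^n$. The claim is fixable (the range of the isometry $S^n|_{L_p(p_1\mathcal Mp_1)}$ is closed, and $\beta_nJ^n(\mathcal M)$ is dense in $L_p$ of the corner by spectral truncation of $\beta_n$, using that $\tau$ is finite), but the paper shows it is unnecessary. Its argument: Proposition \ref{duss} gives inductively $J^n(\mathcal M)=J^n(1)\mathcal MJ^n(1)$, so, the factors of $\theta_n$ commuting and $J(p_0)=0$, one can write $\theta_n=J^n(x_n)$ with $x_n\in p_1\mathcal M_+p_1$; for $A<\|\theta_n\|_\infty$ set $e=\chi_{(A,\infty)}(x_n)$, so that $J^n(e)=\chi_{(A,\infty)}(\theta_n)$ and $J^n(e)S^n(e)=S^n(e)$, whence $\|T^n(e)\|_p=\|\theta_nJ^n(e)S^n(e)\|_p\geq A\|S^n(e)\|_p=A\|e\|_p$. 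Testing $T^n$ directly on the projection $e\in\mathcal M$ (pulled back through $J^n$) thus attains $\|\theta_n\|_\infty$ without any surjectivity statement, which is the cleaner route you were looking for.
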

\begin{proof} Without loss of generality we assume $\|T\|_{L_p(\mathcal M)\to L_p(\mathcal M)}\leq 1.$ The general case follows by considering the contraction $T/\|T\|$ in the proof. 
	
	(i) Recall that $p_0,p_1\in\mathcal{Z}(\mathcal M)$, $p_0+p_1=1$, and $  \rho_T =p_1\rho_T p_1 $.  Note that we may see from the proof of (ii) in Proposition \ref{properties} that $T$ and $J$ are injective on $L_p(p_1 \mathcal M p_1)$ and $p_1 \mathcal M p_1$ respectively.
	Clearly, $(p_1\rho_T p_1)^{-1} $ is well-defined as a densely defined operator in $L_0(p_1\mathcal M p_1)_{+}.$ 
	We use Lemma \ref{extension} and define \[\widetilde{\rho_T }= J \left( (p_1\rho_T p_1 )^{-\frac{1}{p}}\right), \quad \widetilde{b}= b\widetilde{\rho_T }.\] 
	Then the spectral projections of $\widetilde{b}$ commute with $J(\mathcal M)$ since the operators $p_1$ and $\rho_T$ belong  to the center of $\mathcal M$. Also, we observe that \[s(\widetilde{b})=s(b)\wedge s(\widetilde{\rho_T }) = J(1) \wedge J(s((p_1\rho_T p_1 )^{-\frac{1}{p}}) )=J(p_1)=J(1)\] as we have $J(p_0)=0$, according to the fact $T(p_0)=0$ in Proposition \ref{properties}(i). Define   the positive linear operator
	\[S(x)= \widetilde{b}J(x),\quad x\in \mathcal M.\] By Theorem \ref{charac} and Remark \ref{kotokikorarchilobaki}, $S$ is a Lamperti operator. 
	
	Applying \eqref{repp} to $S$, we have
	\[\tau(\rho_S p_0xp_0)=\tau(\widetilde{b}^pJ(p_0xp_0))
	=\tau\left( {b}^pJ\left((p_1\rho_T p_1 )^{-1}p_0xp_0\right)\right)=0 \] for all 	   $x\in\mathcal M_{+}, $ which means that  $p_0\rho_S p_0=0.$ Similarly, for all $x\in\mathcal M_{+}$ we have
	\begin{align*}
	\tau(\rho_S p_1xp_1)
	&=\tau(\widetilde{b}^pJ(p_1xp_1)) =\tau\left( {b}^pJ\left((p_1\rho_T p_1 )^{-1}p_1xp_1\right)\right) \\
	&=\tau(\rho_T (p_1\rho_T p_1 )^{-1}p_1xp_1) = \tau(p_1xp_1). 
	\end{align*} This shows that $p_1\rho_S p_1=p_1.$ Applying \eqref{repp} to $S$ again, we see that  $S|_{L_p(p_1\mathcal M p_1)}$ is an isometry and $S|_{L_p(p_0\mathcal M p_0)}=0$. This completes the proof for (i).
	
	(ii) Define $\theta=J(\rho_T )^{\frac{1}{p}}$ and $\theta_n=\theta J(\theta)\cdots J^{n-1}(\theta).$ Recall that $\rho_T$ is in the center of $\mathcal M$, so $\rho_T$ commutes with $J^k(\theta)$ for all $k\geq 0$, and applying the Jordan homomorphism $J$ we see that  $\theta$ commutes with all $J^k(\theta)$. In particular $\{\rho_T,\theta,J(\theta)\}$ is a commuting family. We see easily by induction that $(J^k(\theta))_{k\geq 0}$ is a commuting family. In particular $\theta_n\geq 0$. Note that $\theta S(x)=S(x)\theta$ for all $x\in L_0(\mathcal M).$ We claim that $T^n=\theta_n S^n,$ for all $n\geq 1$. Indeed, for $n=1,$ recalling that we have observed $J(1)=J(p_1)$ in {(i)}, we see that \[\theta S(p_1 xp_1)  
	=J(\rho_T )^{\frac{1}{p}} b \widetilde{\rho_T } J(p_1 xp_1) 
	= \textbf{}
	J(\rho_T )^{\frac{1}{p}}J \left( (p_1\rho_T p_1 )^{-\frac{1}{p}}p_1xp_1\right)  =bJ(1)J(x)=T(x).\]
	Assume by induction that  $T^n=\theta_n S^n$. Then
	\begin{align*}
	T^{n+1}(x) &= T (\theta_n S^n (x))
	= bJ(\theta_{n}) J(S^n(x))= b  J(\theta_n)  \widetilde{b}^{-1} S^{n+1} (x) \\
	&= J(\theta_n) \widetilde{\rho_T}^{-1}  S^{n+1} (x) = J(\theta_n)  J  (   \rho_T  ^{ \frac{1}{p}} )   S^{n+1} (x) = \theta  J(\theta_n)  S^{n+1} (x) = \theta_{n+1}  S^{n+1} (x).
	\end{align*}
	
	(iii)
	It is obvious that $\|T^n\|_{L^p(\mathcal M)\to L^p(\mathcal M)}\leq\|\theta_n\|_{\infty}.$ 
	Assume that $T^*$ is Lamperti. Then by Proposition \ref{duss}, $J (\mathcal M)=J(1)\mathcal M J(1) $ and we see inductively $J^n (\mathcal M)=J^n(1)\mathcal M J^n(1).$ On the other hand, we have proved in {(ii)} that $(J^k(\theta))_{k\geq 0}$ is a commuting family, so we obtain $\theta_n \in J^n(1)\mathcal M J^n(1)$, whence $\theta_n\in J^n (\mathcal M)$. 
	
	Recall moreover that  $J(p_0)=0$. Thus we may write $\theta_n=J^n ( x_n )$ for some $x_n\in p_1 \mathcal M_{+} p_1$. Let $\|\theta_n\|_\infty>A $ and take a spectral projection $q=e_A^\bot (\theta_n) \in J^n( \mathcal M )$ so that $q\theta_n\geq A\theta_n$. Note that $J(x)\geq \varepsilon$ iff $J(x)=J(1)J(x)J(1)\geq \varepsilon J(1)$. It follows that $J(e_\varepsilon^{\perp}(x))=e_\varepsilon^{\perp}(J(x))$ for all $\varepsilon>0.$ So we may write $q=e_A^\bot (\theta_n)=J^n (e_A^\bot (x_n))$. Denote $e=e_A^\bot (x_n)$. Note that \[ J^n(e) S^n (e)  
	=J^n(e) J^n(e) \widetilde{b}J(\widetilde{b}) \cdots J^n(\widetilde{b})  = S^n (e).\] Therefore, using $T^n(e)=\theta_n S^n(e) = S^n(e) \theta_n  $    we obtain that 
	\[\|T^n(e)\|_p =\|\theta_n  J^n(e) S^n(e)\|_p\geq A\|S^n(e)\|_p.\] 
	This implies that $\|T^n\|_{L_p(\mathcal M)\to L_p(\mathcal M)}\geq A$ as $S$ is an isometry on $L_p(p_1\mathcal M p_1).$  This completes the proof of the theorem.		
\end{proof}

Based on Theorem \ref{main} and the above result,   we   conclude the proof of  the main result.
\begin{proof}[Proof of Theorem \ref{main1}]
	By Theorem \ref{CHARACDL}, there is a positive   Lamperti contraction $S$ such that for all $x\in \mathcal M_{+}$ and $n\in\mathbb N$, we have
	\[T^n(x) =\theta_n S^n (x) \leq \|\theta_n\|_\infty S^n (x) = \|T^n\| S^n (x) \leq K S^n(x).\]
	Hence  \[  \frac{1}{ n+1}\sum_{k=0}^nT^kx \leq K  \frac{1}{ n+1}\sum_{k=0}^n S^kx .  \]  The proof is complete  according to Theorem \ref{main}.
\end{proof}	

\section{Ergodic theorems beyond Lamperti operators} \label{exa}As pointed out previously, Theorem \ref{main} and Theorem \ref{main1} apply to quite general classes of positive operators on classical $L_p$-spaces. However in the noncommutative setting, we may explore other novel and natural examples beside these categories, showing sharp contrast to the classical setting. In this section, we will illustrate two ergodic theorems outside the scope of Theorem \ref{main} or Theorem \ref{main1}.
\subsection{Positive invertible operators which are not Lamperti}\label{sect:kan exa}
In the classical setting we have the following examples of Lamperti operators.
\begin{prop}[\cite{kan78erglamperti}] \label{prop:kan exa}
	\emph{(i)}  Let $1<p<\infty.$ Let $\Omega$ be a $\sigma$-finite measure space. Let $T:L_p(\Omega)\to L_p(\Omega)$ be a bounded positive operator with positive inverse. Then $T$ is Lamperti.
	
	\emph{(ii)} Let $T$ be an invertible nonnegative $n\times n$ matrix such that the set $\{T^k:k\in\mathbb Z\}$ is uniformly bounded in any equivalent matrix norm. Then $T$ is periodic and Lamperti.
\end{prop}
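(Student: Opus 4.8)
The plan is to show that a positive bijection with positive inverse is automatically a lattice isomorphism, from which the Lamperti property is immediate via Proposition \ref{NOTS}. First I would record the elementary fact that if $T$ and $T^{-1}$ are both positive then $T(f\vee g)=Tf\vee Tg$ for all real $f,g\in L_p(\Omega)$: the inequalities $f,g\le f\vee g$ give $Tf\vee Tg\le T(f\vee g)$ by positivity, while applying the same reasoning to the positive operator $T^{-1}$ (with $f,g$ replaced by $Tf,Tg$) yields the reverse inequality $T(f\vee g)\le Tf\vee Tg$. Consequently $T$ preserves the lattice operations, so whenever $f,g\ge 0$ satisfy $fg=0$, i.e. $f\wedge g=0$, we get $Tf\wedge Tg=T(f\wedge g)=0$, which for nonnegative functions means $Tf\cdot Tg=0$. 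By Proposition \ref{NOTS} this is exactly the assertion that $T$ is Lamperti.

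\textbf{Part (ii).} Here the inverse is not assumed nonnegative, so the first task is to recover positivity of $T^{-1}$ from the two-sided boundedness. I would consider the closure $G=\overline{\{T^k:k\in\mathbb Z\}}$ inside $M_n$. Boundedness of the entries of $T^k$ forces $|\det T|^k=|\det(T^k)|$ to stay bounded for all $k\in\mathbb Z$, whence $|\det T|=1$; thus every limit point of $G$ is invertible and $G$ is a compact abelian subgroup of $GL_n(\mathbb R)$. By compactness the powers of $T$ are recurrent, so there are positive integers $m_j$ with $T^{m_j}\to I$; then $T^{m_j-1}\to T^{-1}$ with $m_j-1\ge 0$, exhibiting $T^{-1}$ as a limit of the nonnegative matrices $T^{m_j-1}$, so $T^{-1}\ge 0$. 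At this point the Lamperti property follows directly from part (i), since the separation property of a matrix is independent of the exponent $p$; I would also record the sharper structural fact that a nonsingular nonnegative matrix with nonnegative inverse is \emph{monomial}: expanding $(TT^{-1})_{ij}=0$ for $i\ne j$ as a sum of nonnegative terms forces each term to vanish, which makes every column of $T$ have a single nonzero entry, so $T=DP$ for a permutation matrix $P$ and a positive diagonal $D$.

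\textbf{Periodicity and the main obstacle.} It remains to deduce that $T$ is periodic, which is the only genuinely non-formal point. Writing $T=DP$ with $P$ the permutation matrix of some $\sigma$, I would decompose $\{1,\dots,n\}$ into the cycles of $\sigma$; on a cycle of length $\ell$ the operator $T^{\ell}$ acts as multiplication by the product $\pi>0$ of the corresponding diagonal weights of $D$. Two-sided boundedness of $\{T^{k\ell}\}=\{\pi^{k}I\}$ forces $\pi^{k}$ to remain bounded for all $k\in\mathbb Z$, hence $\pi=1$ and $T^{\ell}=\mathrm{Id}$ on that block; taking $m$ to be the least common multiple of the cycle lengths gives $T^{m}=I$, so $T$ is periodic. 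The main obstacle is precisely this extraction of finite order from boundedness: invertibility (two-sidedness) must be used essentially, first to upgrade $T$ to a monomial matrix through the compact-group recurrence argument, and then to force every cyclic weight product to equal $1$ rather than merely to have modulus at most $1$.
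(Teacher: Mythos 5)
The paper does not actually prove this proposition: it is quoted verbatim from Kan's paper \cite{kan78erglamperti} as background for Section \ref{sect:kan exa}, so there is no in-paper argument to compare yours against. Judged on its own merits, your proof is correct and complete. In part (i), the two-sided lattice identity $T(f\vee g)=Tf\vee Tg$ (positivity of $T$ for one inequality, positivity of $T^{-1}$ followed by applying $T$ for the other) is exactly the classical mechanism, and the reduction of the Lamperti property to ``$fg=0\Rightarrow Tf\cdot Tg=0$ for $f,g\ge 0$'' is precisely Proposition \ref{NOTS} of the paper, which applies since $L_\infty(\Omega)$ is a semifinite von Neumann algebra. In part (ii), the chain $|\det T|=1$ $\Rightarrow$ compactness of the closed power set inside $GL_n$ $\Rightarrow$ recurrence $T^{m_j}\to I$ $\Rightarrow$ $T^{-1}=\lim T^{m_j-1}\ge 0$ is sound, and you correctly isolate where two-sided boundedness is indispensable (a one-sided bound is defeated by, e.g., $T=\tfrac12 I$). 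Two steps are stated tersely but are easily filled in: the passage from ``each term $T_{ik}(T^{-1})_{kj}$ vanishes for $i\ne j$'' to ``each column of $T$ has a single nonzero entry'' needs the observation that every row of $T^{-1}$ contains a strictly positive entry (invertibility plus nonnegativity), after which two nonzero entries in one column of $T$ give an immediate contradiction; and the cycle-product computation $T^\ell=\pi\,\mathrm{Id}$ on a cycle block should fix a convention for $P$ versus $\sigma$, but is otherwise routine. A small remark: once you have the monomial form $T=DP$, the Lamperti property is also immediate by inspection (monomial matrices visibly preserve disjointness of supports), so the detour through part (i) is a matter of taste rather than necessity.
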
 
We provide the following example  which illustrates that there is no reasonable analogue of Kan's above examples for the noncommutative setting. 
\begin{exa}\label{ex:exa}
	Let $1\leq p<\infty $ and $r$ be an invertible matrix $2\times 2$ matrix.
	Define 
	\[T:S_p^2\to S_p^2,\quad T(x)=rxr^*.\]
	Clearly, $T$ is completely positive map, and so is the inverse map $T^{-1}(x)=r^{-1} x (r^{-1})^*$. Note that \[e=\left(
	\begin{array}{ccccc}
	1 & 0\\
	0 &  0\\
	\end{array}
	\right) \quad \text{and} \quad
	  f=\left(
	\begin{array}{ccccc}
	0 & 0\\
	0 &  1\\
	\end{array}
	\right)\] are two orthogonal projections with $ef=fe=0$. But if we take \[r=\left(
	\begin{array}{ccccc}
	1 & 1\\
	\alpha &  \beta\\
	\end{array}\right)\] with $\alpha,\beta\in\mathbb R $ and $1+\alpha\beta\neq 0,$ it is easy to see that $T(e)T(f)\neq 0.$ So $T$ is not Lamperti.
	
	Moreover, consider $\alpha=0,\beta=-1$. Then $r^{-1}=r$ and  $r^2=1_{M_2}$. So  \[\sup_{n\in\mathbb Z}\|T^n\|_{cb, S_\infty^2\to  S_\infty^2} \leq \sup_{n\in\mathbb Z} \|r^n\|_{\infty}^2<\infty.\]
	Since the operator space of linear operators on $M_2$ is finite dimensional, so $(T^k)$  is uniformly bounded with respect to any equivalent operator norm. 
	So we obtain an analogue of operators satisfying {(i)} and {(ii) }of Proposition \ref{prop:kan exa} for the noncommutative setting, but they are not Lamperti. Moreover, we can observe that $\|T(f)\|_p=2.$ Therefore, $T$ is not a contraction for all $1\leq p\leq\infty.$
\end{exa}

Denote $K=\sup_{n\in\mathbb Z}\|T^n\|_{ S_p^2\to S_p^2}$. The above discussions also mean that Theorem \ref{main} is not applicable to obtain the crucial constant $KC_p$ for the maximal ergodic inequality associated with $T$   since $T$ is not a contraction on $S_p^2$.  Moreover Theorem \ref{main1} is not applicable neither since $T$ is not Lamperti.
However, this example  still satisfies the maximal ergodic inequalities with crucial constant $KC_p$ according to the following result in \cite{hongliaowang15erg}. The crucial constant $KC_p$ is not stated explicitly in \cite{hongliaowang15erg} but is implicitly contained in the proof.
\begin{thm}[\cite{hongliaowang15erg}]\label{inve} Let $1<p<\infty.$ Let $\mathcal M$ be a von Neumann algebra with a normal semifinite faithful trace. Suppose $T:L_p(\mathcal M)\to L_p(\mathcal M)$ be a bounded invertible positive operator with positive inverse, such that $\sup_{n\in\mathbb Z}\|T^n\|_{L_p(\mathcal M)\to L_p(\mathcal M)}=K<\infty.$ Then 
	\[\Big\|\Big(\frac{1}{2n+1}\sum_{k=-n}^nT^kx\Big)_{n\geq 0}\Big\|_{L_{p}({\mathcal{M}};\ell_{\infty})}\leq KC_{p }\|x\|_p\] for all $x\in L_p(\mathcal M).$				
\end{thm}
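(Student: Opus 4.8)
The plan is to adapt the transference argument from the proof of Theorem \ref{DEEISO}, replacing the isometric extension of $T$ to $L_p(\mathcal M;\ell_\infty)$ by a two-sided norm comparison that comes purely from positivity and power-boundedness. Write $A_n=\frac{1}{2n+1}\sum_{k=-n}^{n}T^k$ and fix $N\geq 1$. I would first record the decisive elementary fact: if $S:L_p(\mathcal M)\to L_p(\mathcal M)$ is positive and $(x_j)_j$ is a \emph{positive} sequence, then
\[
\Big\|\mathop{{\sup}^{+}}_j Sx_j\Big\|_p\leq \|S\|_{L_p\to L_p}\,\Big\|\mathop{{\sup}^{+}}_j x_j\Big\|_p .
\]
Indeed, for positive sequences $\big\|\sup^+_j x_j\big\|_p=\inf\{\|a\|_p:a\in L_p(\mathcal M)_+,\ x_j\leq a\}$, and whenever $x_j\leq a$ we have $Sx_j\leq Sa$ with $\|Sa\|_p\leq\|S\|\,\|a\|_p$; taking the infimum over $a$ gives the claim. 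This is the device that circumvents the (generally false) $\ell_\infty$-extension of a positive contraction, and in particular avoids any use of the Lamperti structure, which by Example \ref{ex:exa} is simply unavailable for the operators treated here.

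Applying this to the positive operators $T^k$ and $T^{-k}$, both of $L_p\to L_p$ norm at most $K$, I would deduce for every $k\in\mathbb Z$ and every positive $x\in L_p(\mathcal M)$ the two-sided comparison
\[
\Big\|\mathop{{\sup}^{+}}_{0\le n\le N}A_nx\Big\|_p\leq K\Big\|\mathop{{\sup}^{+}}_{0\le n\le N}T^kA_nx\Big\|_p\leq K^2\Big\|\mathop{{\sup}^{+}}_{0\le n\le N}A_nx\Big\|_p,
\]
where the first inequality uses $A_nx=T^{-k}(T^kA_nx)$ together with $T^kA_nx\geq 0$. In Theorem \ref{DEEISO} this comparison was an equality coming from the isometric extension; here two-sided power-boundedness supplies the substitute at the cost of the factor $K$.

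I would then run the transference with the bilateral shift. Let $\sigma$ be the shift on $L_p(\ell_\infty(\mathbb Z)\overline{\otimes}\mathcal M)$, an invertible Dunford--Schwartz operator, and let $A_n''=\frac{1}{2n+1}\sum_{k=-n}^{n}\sigma^k$; by Junge--Xu's maximal inequality \cite{jungexu07erg}, $\big\|\sup^+_{0\le n\le N}A_n''F\big\|_p\leq C_p\|F\|_p$. For a fixed positive $x$ and large $m$, set $f_m(l)=T^lx$ for $|l|\leq m+N$ and $f_m(l)=0$ otherwise, so that $T^kA_nx=A_n''f_m(k)$ for all $|k|\leq m$. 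Using the factorization estimate in the spirit of \eqref{eq:transf} to pass from the discrete sum over $k$ to the maximal norm on the product space, the comparison above, and $\|f_m\|_p^p=\sum_l\|T^lx\|_p^p\leq (2(m+N)+1)K^p\|x\|_p^p$, I would obtain
\[
\Big\|\mathop{{\sup}^{+}}_{0\le n\le N}A_nx\Big\|_p^p\leq \frac{K^pC_p^p\,(2(m+N)+1)}{2m+1}\,\|x\|_p^p .
\]
Letting $m\to\infty$ the ratio tends to $1$; then letting $N\to\infty$ and invoking Proposition \ref{FINT} gives $\big\|\sup^+_{n\ge 0}A_nx\big\|_p\leq KC_p\|x\|_p$ for positive $x$, which is the essential case carrying the stated constant, the general case following by decomposition of $x$ into positive parts.

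The main obstacle is exactly the extension/comparison step: a positive contraction on a noncommutative $L_p$-space need not extend to $L_p(\mathcal M;\ell_\infty)$, so one cannot amplify $T$ directly, and (unlike the classical de la Torre setting) one cannot fall back on a Lamperti representation. The resolution is that the maximal norm of a \emph{positive} sequence is controlled by a single dominating positive element, on which positivity of $T^{\pm k}$ acts transparently; combined with $\sup_{k\in\mathbb Z}\|T^k\|\leq K$ this is precisely what makes the bilateral transference close with the clean constant $KC_p$. I would expect the only delicate bookkeeping to be confirming that the bilateral shift average retains the Junge--Xu constant $C_p$ and handling the window edge-effects as $m\to\infty$.
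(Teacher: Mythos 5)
A preliminary remark on the comparison you were asked for: the paper does not prove Theorem \ref{inve} at all — it is quoted from \cite{hongliaowang15erg}, with the explicit remark that the constant $KC_p$ is "implicitly contained" in that paper's proof. So your attempt must be judged on its own merits. Its skeleton is the natural one and mirrors the paper's proof of Theorem \ref{DEEISO}: your positivity-domination lemma, i.e. $\big\|\mathop{{\sup}^{+}}_j Sx_j\big\|_p\le\|S\|\,\big\|\mathop{{\sup}^{+}}_j x_j\big\|_p$ for positive operators $S$ and positive sequences via the infimum description of the maximal norm, is correct, and it is indeed a legitimate substitute for the $\ell_\infty$-extension that is unavailable here.

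The genuine gap is in the constant, which is the whole point of this statement. By your own accounting, \emph{two} independent factors of $K$ enter the argument: one from the pull-back $A_nx=T^{-k}(T^kA_nx)$ in your two-sided comparison, and a second from $\|f_m\|_p^p=\sum_l\|T^lx\|_p^p\le(2(m+N)+1)K^p\|x\|_p^p$. Carrying both through, the correct form of your final display is
\[
\Big\|\mathop{{\sup}^{+}}_{0\le n\le N}A_nx\Big\|_p^p\leq \frac{K^{2p}\,C_p^p\,(2(m+N)+1)}{2m+1}\,\|x\|_p^p ,
\]
so letting $m\to\infty$ gives $K^2C_p$, not $KC_p$: your displayed bound silently drops one factor of $K^p$. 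This loss is intrinsic to the transference scheme, not a repairable slip — the orbit-norm factor cannot be avoided (one only has the lower bound $\|T^lx\|_p\ge K^{-1}\|x\|_p$, and in the worst case $\|f_m\|_p^p/(2m+1)$ really is of order $K^p\|x\|_p^p$), and the pull-back factor is likewise unavoidable. Two secondary issues compound the constant: (a) the maximal inequality for the \emph{bilateral} averages $A_n''$ of the shift is not literally Junge--Xu's Theorem 0.1 (which is one-sided, and that is how $C_p$ is defined in this paper); it follows by splitting $A_n''$ into forward and backward one-sided averages, but at the cost of a factor $2$; (b) your comparison requires $x\ge 0$, while the theorem is stated for all $x\in L_p(\mathcal M)$, so the decomposition into positive parts costs a further absolute factor. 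In total you prove a maximal inequality with constant of order $K^2C_p$ (times absolute factors), which is the qualitative statement but not the asserted bound $KC_p\|x\|_p$; note that the sharp constant $KC_p$ is precisely why the present paper quotes this theorem (it is used verbatim in Section \ref{exa}). Obtaining a single factor of $K$ requires a different mechanism — e.g. the factorization $T^n=\theta_nS^n$ with $\|\theta_n\|_\infty=\|T^n\|$ and $S$ a contraction, as in Theorem \ref{main1}, which Example \ref{ex:exa} shows is unavailable for the non-Lamperti operators covered by Theorem \ref{inve} — or the actual argument of \cite{hongliaowang15erg}.
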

Note that $S_p^2$ and $S_\infty^2$ are isomorphic as finite dimensional Banach spaces, so the positive invertible operator $T$ given in Example \ref{ex:exa} with positive inverse associated with $\alpha=0,\beta=-1$ satisfies
\[K_p\coloneqq\sup_{n\in\mathbb Z}\|T^n\|_{ S_p^2\to \mathbb S_p^2}   <\infty.\]
Applying the above theorem, we have 
\[\Big\|\Big(\frac{1}{2n+1}\sum_{k=-n}^n T^kx\Big)_{n\geq 0}\Big\|_{L_{p}({\mathcal{M}};\ell_{\infty})}\leq K_pC_{p }\|x\|_p,\quad x\in S_p^2.\]

\subsection{Junge-Le Merdy's example}In this subsection, we take Junge-Le Merdy's examples \cite{jungelemerdy07dilation} and establish the noncommutative ergodic theorem for them. That is, we prove Proposition \ref{atm}.

\begin{proof}[Proof of Proposition \ref{atm}]Let $(e_{ij})_{i,j=1}^k$ be the standard basis of $S_p^k.$ Following the examples in \cite[Section 5]{jungelemerdy07dilation}, we define the operators on $S_p^k$ as 
	\[T_1(x)=\sum_{i=1}^ka_i^*xb_i,\ T_2(x)=\sum_{i=1}^kb_i^*xa_i,T_3(x)=\sum_{i=1}^ka_i^*xa_i,
	\ T_4(x)=\sum_{i=1}^kb_i^*xb_i,\ x\in S_p^k,\] where $a_i=e_{ii}$ and $b_i=k^{-\frac{1}{2p}}e_{1i}$ for $1\leq i\leq k.$ By \cite{jungelemerdy07dilation} each $T_i$ is a  contraction for $1\leq i\leq 4.$ We define
	\[T=\frac{1}{4}(T_1+T_2+T_3+T_4).\] Then $T$ is completely positive and completely contractive. For any positive element $x$, a straightforward calculation yields \[ (T(x))_{ij}=0,\quad \forall\, i\neq j.\] 
	Let $\mathcal D_p^k$ be the diagonal $L_p$-subspace of $S_p^k$. Then $\mathcal D_p^k$ becomes a commutative $\ell_p$-space and $\operatorname{ran} (T)\subseteq \mathcal D_p^k$. In particular, the restriction $ T  {|_{\mathcal D_p^k}}:\mathcal D_p^k \to \mathcal D_p^k$ is a positive contraction on the commutative $\ell_p$ space $\mathcal D_p^k$. Therefore, by Akcoglu's ergodic theorem \cite{akcoglu75erg}, we have 
	\[\Big\|\Big(\frac{1}{ n+2}\sum_{m=0}^n T^{m+1} y\Big)_{n\geq 0}\Big\|_{L_{p}({\mathcal{M}};\ell_{\infty})}\leq C_p\|Ty\|_p\leq C_p\|y\|_p\] for all $y\in S_p^k.$ Therefore, for all $x\geq 0,$ we have from above
	\[\Big\|\Big(\frac{1}{n+1}\sum_{m=0}^n T^{m} x\Big)_{n\geq 0}\Big\|_{L_{p}({\mathcal{M}};\ell_{\infty})}
	\leq\Big\|\Big(\frac{1}{ n+2}\sum_{m=1}^n T^{m+1} x\Big)_{n\geq 0}\Big\|_{L_{p}({\mathcal{M}};\ell_{\infty})} + \|x\|_p 
	\leq (C_p +1)\|x\|_p.\] We can choose $k$ to be large enough so that $T$ does not admit a dilation (see \cite{jungelemerdy07dilation}). This completes the proof.
\end{proof}
\begin{rem}
	Note that  we cannot directly apply Theorem  \ref{main} to the non-dilatable operator $T:S_p^k \to S_p^k$ (see Remark \ref{rem:dilation}). However, the following property is applicable, which can be easily deduced from above arguments together with Theorem  \ref{main}:
	Let $1<p<\infty.$ Let $T:L_p(\mathcal M)\to L_p(\mathcal M)$ be a positive contraction such that for some positive integer $k,$ we have $\operatorname{ran}\,(T^k)\subseteq L_p(\mathcal N)$ where $\mathcal N\subseteq\mathcal M$ is a von Neumann subalgebra and $T\big|_{L_p(\mathcal{N})}\in \overline{\operatorname{conv}}^{sot}(\mathbb{SS}^+(L_p(\mathcal{N})))$, then $T$ admits a maximal ergodic inequality as above. 
\end{rem}

\subsection*{Acknowledgment}
The authors would like to thank Professor Quanhua Xu for helpful comments, and also thank C\'edric Arhancet for his interesting communications on Theorem \ref{thm:cedric} and Proposition \ref{prop:cp}. Guixiang Hong was partially supported by NSF of China (No. 12071355) and the Fundamental Research Funds for the Central Universities. Samya Kumar Ray acknowledges DST-INSPIRE Faculty Fellowship No. DST/INSPIRE/04/2020/001132. Simeng Wang was partially supported by a public grant as part of the FMJH, the ANR Project (No. ANR-19-CE40-0002), the
Fundamental Research Funds for the Central Universities No. FRFCUAUGA5710012222 and the NSF of China (No.12031004). We sincerely thank the anonymous referee for several suggestions which considerably improved the presentation of this paper.

\end{document}